\newtheorem{theorem}{Theorem}[section]
\newtheorem{Def}[theorem]{Definition}
\newtheorem{Prop}[theorem]{Proposition}
\newtheorem{Lem}[theorem]{Lemma}
\newtheorem{Cor}[theorem]{Corollary}
\newtheorem{Rem}[theorem]{Remark}
\newtheorem{Exa}{Example}[section]
\newcommand{\ba}{\begin{array}}
\newcommand{\ea}{\end{array}}
\newcommand{\D}{{\mathcal D}}
\newcommand{\E}{{\mathcal E}}
\newcommand{\F}{{\mathcal F}}
\def\ba{{\bf a}}
\def\bi{{\bf i}}
\def\bj{{\bf j}}
\def\bu{{\bf u}}
\def\bw{{\bf w}}
\def\pt{\partial}
\def\dist{\textup{dist}}
\def\bv{{\bf v}}
\newcommand{\R}{{\mathbb R}}
 \numberwithin{equation}{section}
\begin{document}
\title{Lipschitz equivalence of self-similar sets and hyperbolic boundaries II}

\author{Guo-Tai Deng} \address{Department of Mathematics Central China Normal University Wuhan, 430079, P.R. China}\email{hilltower@163.com}
\author{Ka-Sing Lau*} \address{Department of Mathematics, The Chinese University of Hong Kong, Hong Kong, P.R. China}\email{kslau@math.cuhk.edu.hk}
\author{Jun Jason Luo**} \address{Department of Mathematics, Shantou University, Shantou, Guangdong 515063, P.R. China}\email{luojun@stu.edu.cn}

\thanks{*supported by the HKRGC grant, and  the NNSF of China (11171100, 11371382)\\
\indent **supported by NNSF of China (11301322), Specialized Research Fund for the Doctoral Program of Higher Education of China (20134402120007), Foundation for Distinguished Young Talents in Higher Education of Guangdong (2013LYM\_0028) and STU Scientific Research Foundation for Talents (NTF12016).}
\subjclass[2010]{Primary 28A80; Secondary 05C63}
\keywords{self-similar set, augmented tree, hyperbolic boundary, near-isometry, Lipschitz equivalence, quasi-rearrangeable matrix}

\date{\today}

\begin{abstract}
In \cite{LuLa13}, two of the authors gave a study of Lipschitz equivalence of self-similar sets through the augmented trees, a class of hyperbolic graphs introduced by Kaimanovich \cite{Ka03} and developed by Lau and Wang \cite{LaWa09}. In this paper, we continue such investigation. We remove a major assumption in the main theorem in \cite{LuLa13} by using a new notion of quasi-rearrangeable matrix,  and show that the hyperbolic boundary of any simple augmented tree is Lipschitz equivalent to a Cantor-type set.  We then apply this result to consider the Lipschitz equivalence of certain totally disconnected self-similar sets as well as their unions.
\end{abstract}

\maketitle

\section{\bf Introduction}

The class of hyperbolic graphs plays an important role in geometric group theory (\cite{Gr87, Wo00}).  Such graphs together with their limits ({\it hyperbolic boundaries}) have striking resemblance to the classical hyperbolic spaces.  In \cite {Ka03}, Kaimanovich  first introduced this hyperbolicity into the study of  self-similar set $K$. He initiated the notion of {\it augmented tree} by adding  more edges to the symbolic space of $K$ according to the neighboring cells.  This gives a far richer structure on the symbolic space. The idea was pursued by Lau and Wang \cite {LaWa09} (also Wang \cite{Wa12}),  they showed that for $K$ satisfying the open set condition, the augmented tree is  hyperbolic, and  $K$  can be  identified with the hyperbolic boundary of the augmented tree.  There is a large literature on random walks on hyperbolic graphs and their boundary behaviors (see \cite{Wo00} and references therein); such consideration on augmented trees can be found in \cite{Ka03, LaWa11}. In another attempt, the augmented trees and hyperbolic boundaries were used to study the Lipschitz equivalence of  self-similar sets \cite{LuLa13} and Moran fractals in \cite{Lu13}.

\medskip

Recall that two compact metric spaces $(X,d_X)$ and $(Y,d_Y)$ are said to be {\it Lipschitz equivalent}, and denoted by $X\simeq Y$, if there is a bi-Lipschitz map  $\sigma$ from $X$ onto $Y$, i.e., $\sigma$ is a bijection and  there is a constant $C>0$ such that
$$
C^{-1}d_X(x,y)\leq d_Y(\sigma(x),\sigma(y))\leq Cd_X(x,y), \quad  \quad \forall  \ x,y\in X.
$$
 Lipschitz classification of sets has been an important topic in geometry, topology and analysis.  In fractal geometry, the pioneer work was due to Cooper and Pignartaro \cite{CoPi88} and Falconer and Marsh \cite{FaMa92} on Cantor-type sets under the strong separation condition.  The recent interest was due to Rao, Ruan and Xi \cite{RaRuXi06} on their path breaking solution to a question of David and Semmes, so called the $\{1,3,5\}-\{1,4,5\}$ problem.  For the developments and the generalizations, the reader can refer to \cite{DeHe12, RaRuWa10, RuWaXi12, XiRu07, XiXi10,XiXi12, XiXi13, XiXi14} for more details.  In particular,  in \cite{XiXi13}, the Lipschitz classification of self-similar sets with exponentially commensurable contraction ratios is characterized in terms of the ideal classes in algebra.

\medskip

Let $K$ be a self-similar set generated by an iterated function system (IFS) of $N$ similitudes of equal contraction ratio, and let $X = \cup_{n=0}^\infty \Sigma^n, \Sigma = \{1, \dots , N\}$  be the associated symbolic space of words; we also use the notion  ``$N$- $\cdots$''  to emphasize on the cardinality $N$.  We denote the set of edges from the canonical tree structure by $\E_v$ (vertical edges); as a tree the boundary is a homogeneous Cantor set.  We add new edges by joining words $\bi, \bj$ in the same level $\Sigma^n$ if the corresponding cells $K_\bi$, $K_\bj$  intersect, and denote this set of edges by $\E_h$ (horizontal edges). Let $\E = \E_v\cup \E_h$, and call $(X, \E)$ a {\it self-similar augmented tree} \cite {Ka03}. We say that $(X, \E)$ is {\it simple} if there is only finitely many non-isomorphic classes of subgraphs defined by the horizontal components and their descendants  (see Definition \ref{def of simple tree}). In this case, $(X, \E)$ is hyperbolic \cite {LuLa13}, and the hyperbolic boundary $\partial X$ can be identified with $K$  (\cite {Ka03, LaWa09}).  We use $A$ to denote the incidence matrix, which describes the graph relation of the horizontal  components of $(X, \E)$.  The main theorem in \cite{LuLa13} is

\medskip

{\it If the incidence matrix $A$ is primitive, then $\partial(X, {\mathcal E})$ is Lipschitz equivalent to  $\partial(X, {\mathcal E}_v)$, which is a homogeneous $N$-Cantor set.

Moreover, if the self-similar set $K$ satisfies condition (H)(see Section 5), then $K$ is Lipschitz equivalent to the $N$-Cantor set with the same contraction ratio as the IFS.}

\medskip

 Note that in this consideration, we do not need to assume the open set condition, but it will come out as a consequence of the Lipschitz equivalence (Corollary 3.11 in \cite{LuLa13}). This augmented tree approach  provides a general and simple framework to study the Lipschitz equivalence of totally disconnected self-similar sets, and unifies many of the previous investigations. It covers most of the known examples, and also certain Moran fractals \cite {Lu13}.  In the investigation in \cite{LuLa13},  a number of questions were raised. In particular, it was asked whether the assumption that the incidence matrix $A$ is primitive can be removed, as there are simple examples that such condition is not satisfied (see Example \ref{exa5.3} in Section 5 or discussions in \cite{XiXi13}).

\medskip

In this paper we continue our investigation started in \cite{LuLa13}. Our main purpose is to remove the primitive assumption on the incidence matrix, and to extend the scope  to more general class of augmented trees, which includes the union of certain self-similar sets.  We called $(X, {\mathcal E})$ an {\it $N$-ary augmented tree} if it is a tree such that each vertex has $N$ descendants, and the horizontal edges satisfy the condition in Definition \ref {Def'}.

\medskip

\begin{theorem} \label{th1.1}
    Suppose an $N$-ary augmented tree $(X, {\mathcal E})$ is simple, then $\partial(X, {\mathcal E})$ is Lipschitz equivalent to  $\partial(X, {\mathcal E}_v)$, which is an $N$-Cantor set.
\end {theorem}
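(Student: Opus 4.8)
The plan is to reduce Theorem~\ref{th1.1} to a purely combinatorial statement about the incidence matrix $A$, and then to settle that combinatorial problem via the notion of \emph{quasi-rearrangeable matrix}. First I would record the metric picture of the two boundaries. Since $(X,\mathcal{E})$ is simple it is hyperbolic (\cite{LuLa13}), so $\partial(X,\mathcal{E})$ carries a visual metric $\rho_a$ (with $a>1$ fixed) for which $\rho_a(\xi,\eta)\asymp a^{-\ell(\xi,\eta)}$, where $\ell(\xi,\eta)$ is the largest level at which the horizontal components carrying $\xi$ and $\eta$ coincide (equivalently, stay within bounded horizontal distance). Because there are only finitely many isomorphism types $\tau_1,\dots,\tau_m$ of horizontal components together with their descendant trees, $\partial(X,\mathcal{E})$ is a Moran-type set along the directed graph of $A$: writing $E_i$ for the boundary piece carried by a component of type $\tau_i$, one gets $E_i=\bigsqcup_j\bigsqcup_{k=1}^{A_{ij}} f^{(i)}_{j,k}(E_j)$, a graph-directed system in which every generating map $f^{(i)}_{j,k}$ is a similarity of ratio $a^{-1}$ and, by simplicity, the pieces in each decomposition are uniformly separated. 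On the other side, $\partial(X,\mathcal{E}_v)=\Sigma^{\mathbb N}$ with $\rho_a(\omega,\omega')=a^{-|\omega\wedge\omega'|}$ is the homogeneous $N$-Cantor set, i.e.\ the degenerate graph-directed system with the $1\times 1$ matrix $[N]$. Finally, setting $w=(|\tau_j|)_j$ and counting children gives $Aw=Nw$, so $w>0$ is a Perron eigenvector of $A$ with Perron root exactly $N$ (necessary for the two boundaries to have a common dimension).

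The combinatorial core is the following. A type-$\tau_i$ component has $\sum_j A_{ij}$ child components and $\langle(A_{ij})_j,w\rangle=N|\tau_i|$ descendant vertices, but since the sizes $|\tau_j|$ vary one cannot in general split $E_i$ into exactly $N$ similar pieces directly --- this is the obstruction that primitivity papered over in \cite{LuLa13}. The resolution is to pass to a uniformly bounded number of levels at once (replace $A$ by a power $A^p$, still with eigenvector $w$ and root $N^p$) and then to \emph{regroup} the level-$p$ descendant components of every type into exactly $N^p$ blocks, each of total $w$-weight equal to $|\tau_i|$, in a way that is \emph{coherent along the tree}: the blocks of the children of a block are themselves blocks of the prescribed shape. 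Existence of such a coherent regrouping is precisely the property that $A$ is quasi-rearrangeable, and I would show it holds for any $A$ arising from a simple augmented tree. When $A$ is primitive this $p$ and this regrouping are supplied by \cite{LuLa13}; the new step is to drop primitivity, for which I would use the communicating-class decomposition of $A$, a finite poset $\Lambda$. The maximal classes are essentially primitive and are handled by the old construction; for a non-maximal class $C$ the descendant components leaking into lower classes must be reabsorbed, and the bookkeeping closes exactly because $w$ is a \emph{positive} $N$-eigenvector, so the weight carried down out of $C$ matches what the lower classes are required to receive. One also checks at the outset that ``degenerate'' types --- components whose descendant trees carry no boundary mass --- may be discarded.

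Given the coherent regrouping, I would build the equivalence inductively: match the $N^p$ length-$p$ words over $\Sigma$ with the $N^p$ first-generation blocks, and recurse. Simplicity guarantees that at level $np$ all the resulting cylinder-type sets have diameter comparable to $a^{-np}$ with constants independent of $n$, and that each is subdivided into exactly $N^p$ pieces of diameter comparable to $a^{-(n+1)p}$. Hence the induced map $\sigma:\Sigma^{\mathbb N}\to\partial(X,\mathcal{E})$ is a bijection between the two Cantor sets that distorts the confluence level by a bounded additive amount, and the standard argument (\cite{FaMa92, LuLa13, XiXi13}) upgrades this to a bi-Lipschitz equivalence $\partial(X,\mathcal{E})\simeq\partial(X,\mathcal{E}_v)$.

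The main obstacle is the combinatorial step in the reducible case: producing one regrouping that is simultaneously coherent along \emph{every} branch of the tree and yields a \emph{uniform} bi-Lipschitz constant. With $A$ primitive, primitivity lets one mix all types within a bounded number of levels; with $A$ reducible, mass flows only down the poset $\Lambda$, so one must rearrange class by class while respecting that the children of a block may fall into several incomparable classes --- keeping the block $w$-weights exactly right at every level, forever, with no split of a component across blocks that degrades as $n\to\infty$. Making this precise is exactly what the quasi-rearrangeable matrix machinery is designed to do, and it is the part that requires genuinely new input beyond \cite{LuLa13}.
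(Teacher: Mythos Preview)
Your high-level route coincides with the paper's: pass to a power of $A$ whose canonical form has primitive-or-zero diagonal blocks (Lemma~\ref{th4.1}), handle the bottom primitive block via the rearrangeability of \cite{LuLa13} (Proposition~\ref{th4.2}), then climb the block-triangular structure using quasi-rearrangeability together with $A\bu=N\bu$ (Lemma~\ref{th4.3}). The paper, however, does not build the map on the boundary; it constructs a \emph{near-isometry} $\sigma:(X,\E)\to(X,\E_v)$ on the graphs themselves (Definition~\ref{def2.1}, relaxed to allow finite exceptional sets) and then invokes Proposition~\ref{th2.6} to pass to boundaries. Working at the graph level makes the ``coherence along every branch'' and the uniformity of the bi-Lipschitz constant automatic, since one only has to bound $|d(\sigma(x),\sigma(y))-d(x,y)|$.

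There is a genuine gap in your combinatorics. You propose to split the level-$p$ descendant \emph{components} of a type-$\tau_i$ piece into $N^p$ blocks of $w$-weight $|\tau_i|$, but such a block is a heterogeneous union of components with no type of its own; applying the same rule to each constituent produces $(\text{number of components in the block})\cdot N^p$ sub-blocks rather than $N^p$, so the matching with $\Sigma^p$ collapses after one step. The paper's regrouping is dual to yours: it partitions the $Nu_i$ level-one descendant \emph{vertices} of a type-$T_i$ component into $u_i/\gcd(\bu)$ groups of size $\gcd(\bu)\cdot N$, one group per run of vertices in $\sigma(T_i)$. In the quasi-rearrangeable case (Definition~\ref{th3.1}, Proposition~\ref{th3.3}(ii)) these groups, restricted to types in the top block $A_1$, have weight $\le N$ rather than $=N$; the deficit is then filled by components whose type lies in a lower diagonal block, on which the near-isometry has already been built at the previous inductive stage. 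Finally, no ``degenerate types'' need discarding here: in an $N$-ary augmented tree every vertex has $N$ children, so every horizontal component carries positive boundary mass.
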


 \medskip

    By applying the theorem to  self-similar sets,  we have

 \medskip

 \begin{theorem} \label{th1.2}
    Suppose an self-similar augmented tree $(X, {\mathcal E})$ defined by an IFS ($N$ similitudes with equal contraction ration $r$) is simple and satisfies condition (H) (see Section 5),  then $K$ is Lipschitz equivalent to the $N$-Cantor set with contraction ratio $r$.
\end{theorem}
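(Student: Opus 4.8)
The plan is to factor the desired equivalence through the hyperbolic boundary, letting Theorem~\ref{th1.1} carry the combinatorial weight and using condition~(H) only to match the visual metric on $\partial(X,\E)$ with the Euclidean metric on $K$. Fix the visual parameter $a>0$ by $e^{-a}=r$; this is the value for which the tree boundary carries the ``correct'' Cantor metric, and it is admissible here because the identification $\partial X\cong K$ recalled in the Introduction is realised precisely at this parameter. Write $\rho_a$ for the corresponding visual metric on $\partial(X,\E)$ and on $\partial(X,\E_v)$, and let $\pi\colon\partial(X,\E)\to K$ be the canonical coding map. Everything reduces to one assertion: \emph{with $\partial(X,\E)$ carrying $\rho_a$ and $K$ carrying the Euclidean metric, $\pi$ is a bi-Lipschitz bijection.} Granting this, $K\simeq(\partial(X,\E),\rho_a)\simeq(\partial(X,\E_v),\rho_a)$ by Theorem~\ref{th1.1}, and the last space is the $N$-Cantor set of ratio $r$ by the computation below; this is the theorem.

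The auxiliary computation is elementary. Since $(X,\E_v)$ is the full $N$-ary tree, the Gromov product of two ends equals the length $|\bi\wedge\bj|$ of their longest common prefix, so $\rho_a(\bi,\bj)\asymp e^{-a|\bi\wedge\bj|}=r^{|\bi\wedge\bj|}$; and for the coding map $\pi_0$ onto the homogeneous $N$-Cantor set $C_r$ of ratio $r$ one has $|\pi_0(\bi)-\pi_0(\bj)|\asymp r^{|\bi\wedge\bj|}$ as well. Hence $\pi_0$ is a bi-Lipschitz bijection and $(\partial(X,\E_v),\rho_a)\simeq C_r$.

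It remains to establish the assertion about $\pi$. For the upper estimate $|\pi(\xi)-\pi(\eta)|\lesssim\rho_a(\xi,\eta)$ I would argue geometrically in $(X,\E)$: a geodesic joining the ends $\xi,\eta$ rises to a minimal level $n$ comparable to the Gromov product $(\xi\,|\,\eta)$, runs along a horizontal path of length at most $M$ (where $M$ depends only on the finitely many horizontal component types, by simplicity), and then descends again; hence $\pi(\xi),\pi(\eta)$ lie in a chain of at most $M$ level-$n$ cells, each of diameter $\asymp r^{n}$, so $|\pi(\xi)-\pi(\eta)|\le M\,(\operatorname{diam}K)\,r^{n}\asymp e^{-a(\xi|\eta)}\asymp\rho_a(\xi,\eta)$. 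For the lower estimate---and for injectivity of $\pi$---I would invoke condition~(H) of Section~5 in quantitative form: it supplies $c_0>0$ such that any two level-$m$ cells not joined by a horizontal edge satisfy $\dist(K_{\bi},K_{\bj})\ge c_0 r^{m}$. Dually, if $\rho_a(\xi,\eta)$ is of order $r^{n}$ then below a level comparable to $n$ the cells of $\xi$ and of $\eta$ fall into different horizontal clusters, so at some level $m\asymp n$ the cells $K_{\xi|m},K_{\eta|m}$ share no horizontal edge, whence $|\pi(\xi)-\pi(\eta)|\ge c_0 r^{m}\asymp\rho_a(\xi,\eta)$; applied to any two distinct ends, the same separation shows $\pi$ is injective, hence a bijection onto $K$.

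The main obstacle is precisely this last lower Euclidean estimate: it is the place where the open set condition was used in \cite{LaWa09,LuLa13}, and one must now show that condition~(H) does the same job---turning the graph-theoretic statement ``no horizontal edge at level $m$'' into a uniform Euclidean gap of size $\asymp r^{m}$, and verifying that a small value of $\rho_a$ really forces the relevant cells apart rather than merely far apart in the augmented metric. A secondary, more routine point is the uniformity of the comparison $(\xi\,|\,\eta)\asymp n$ between the augmented-tree Gromov product and the level at which the cells of $\xi$ and $\eta$ separate; this rests on the finiteness of horizontal component types from simplicity, and is what makes the various ``$\asymp$'' above independent of $\xi$ and $\eta$.
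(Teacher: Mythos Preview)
Your proposal is correct and follows essentially the same route as the paper's proof (Theorem~\ref{th5.1}): apply Theorem~\ref{th1.1} to get $\partial(X,\E)\simeq\partial(X,\E_v)$, then use the H\"older/bi-Lipschitz identification of $\partial X$ with $K$ (which is exactly what simplicity together with condition~(H) provides via \cite[Proposition~3.5]{LuLa13}) to transfer this to $K\simeq C_r$. The only cosmetic difference is that the paper keeps the visual parameter $a$ generic and works with the H\"older exponent $\alpha=-\log r/a$, quoting the two-sided H\"older estimate as a black box, whereas you normalise $e^{-a}=r$ so that $\alpha=1$ and sketch the upper and lower estimates for $\pi$ directly; your identification of the lower bound as the place where condition~(H) enters is precisely the content of that cited proposition.
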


\medskip

The proof Theorem \ref{th1.1} is based on constructing  a {\it near-isometry} $\sigma$ between the augmented tree $(X, \E)$ and the tree $(X, \E_v)$ ($\sigma$ is stronger than the rough isometry in literature).  In \cite{LuLa13}, the existence of such isometry depends on the incidence matrix $A$ is primitive, which implies {\it rearrangeable}, a combinatoric property that  allows us to permute the vertices and edges of the augmented tree in order to construct $\sigma$.  Without the primitive condition as in Theorem \ref{th1.1},  we need to introduce a new notion of {\it quasi-rearrangeable}  to obtain the needed near-isometry (Sections 3, 4). In doing so, we also need to extend slightly the definition of near-isometry, together with other modifications of the augmented trees that include the unions and quotients. As a consequence, we can use Theorem \ref{th1.2} to consider some fractal sets that are not necessarily self-similar. Among those, we prove

\medskip

\begin{Prop}\label{pro1}
Let $\mathcal C$ be the standard Cantor set. Then ${\mathcal C}\cup ({\mathcal C}+\alpha)$ is Lipschitz equivalent to $\mathcal C$ if $\alpha>1$ or if $0<\alpha\le 1$ is a rational.
\end{Prop}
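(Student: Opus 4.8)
The plan is to realize $E:=\mathcal{C}\cup(\mathcal{C}+\alpha)$ as (a set bi-Lipschitz to) the hyperbolic boundary of a \emph{simple} $2$-ary augmented tree with contraction ratio $1/3$, and then to invoke Theorem~\ref{th1.1} together with the union/quotient extension of Theorem~\ref{th1.2} from Sections~3--4 (and condition~(H) of Section~5) to conclude that $E\simeq$ the $2$-Cantor set of ratio $1/3$, which is $\mathcal{C}$ itself. Throughout I use that $\mathcal{C}$ is the attractor of $f_0(x)=x/3$, $f_1(x)=x/3+2/3$ — so $N=2$, $r=1/3$ — and that, since $f_0(\mathcal{C})\subseteq[0,1/3]$ and $f_1(\mathcal{C})\subseteq[2/3,1]$ are disjoint, the self-similar augmented tree of $\mathcal{C}$ already satisfies $\E=\E_v$.

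I would give $E$ the tree $(X,\E)$ whose root $\vartheta$ has two children $L,R$: below $L$ sits the binary symbolic tree of $\mathcal{C}$, with cells $f_{\bi}(\mathcal{C})$ ($\bi\in\{0,1\}^{*}$), and below $R$ the symbolic tree of $\mathcal{C}+\alpha$, with cells $f_{\bi}(\mathcal{C})+\alpha$; this is a $2$-ary tree and $\partial(X,\E_v)$ is the $2$-Cantor set of ratio $1/3$. Because each of $\mathcal{C}$ and $\mathcal{C}+\alpha$ has separated cells, horizontal edges can occur only between an $L$-cell and an $R$-cell of the same level: writing $f_{\bi}(\mathcal{C})=3^{-n}\mathcal{C}+c_{\bi}$ with $3^{n}c_{\bi}\in\mathbb{Z}$ and using the classical identity $\mathcal{C}-\mathcal{C}=[-1,1]$, the cells $f_{\bi}(\mathcal{C})$ and $f_{\bj}(\mathcal{C})+\alpha$ meet iff the integer $3^{n}(c_{\bj}-c_{\bi})$ lies within distance $1$ of $3^{n}\alpha$, so at most three $R$-cells meet a given $L$-cell and which ones is determined by the offset $o_{n}:=3^{n}\alpha-\lfloor 3^{n}\alpha\rfloor$. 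I would then check the (routine) facts that horizontal degrees are bounded, that $(X,\E)$ is an $N$-ary augmented tree in the sense of the Introduction, that it is hyperbolic, and that condition~(H) holds — so that $\partial(X,\E)$ is bi-Lipschitz to $E$ with its Euclidean metric (the boundary metric being comparable to $3^{-(\text{confluence level})}$).

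Next I would dispose of the two cases. If $\alpha>1$ then $\mathcal{C}\subseteq[0,1]$ and $\mathcal{C}+\alpha\subseteq[\alpha,\alpha+1]$ are disjoint and at distance $\alpha-1>0$, no horizontal edge arises, $\E=\E_v$, and $(X,\E)$ is trivially simple, so Theorem~\ref{th1.1} gives $E\simeq\partial(X,\E_v)=\mathcal{C}$; equivalently, $x\mapsto x/3$ on $\mathcal{C}$ and $x\mapsto (x-\alpha)/3+2/3$ on $\mathcal{C}+\alpha$ glue to a bi-Lipschitz bijection of $E$ onto $f_0(\mathcal{C})\cup f_1(\mathcal{C})=\mathcal{C}$. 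If $\alpha=p/q\in(0,1]$ in lowest terms, then the horizontal structure at level $n$ depends only on $o_{n}=(3^{n}p\bmod q)/q$, and since $n\mapsto 3^{n}p\bmod q$ is eventually periodic the set $\{o_{n}:n\ge0\}$ is finite; hence only finitely many isomorphism classes of horizontal components and of their descendant subgraphs occur, i.e.\ $(X,\E)$ is simple. Theorem~\ref{th1.1} then yields $\partial(X,\E)\simeq\partial(X,\E_v)$, the $2$-Cantor set of ratio $1/3$, and with $\partial(X,\E)\simeq E$ this gives $E\simeq\mathcal{C}$ — equivalently, the union-version of Theorem~\ref{th1.2} delivers the same conclusion in one step.

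The step I expect to be the main obstacle is the verification of simplicity: I must describe exactly how the overlap configuration of a pair (level-$n$ $L$-cell, level-$n$ $R$-cell) propagates to the level-$(n+1)$ children under triadic refinement, and confirm that the directed graph of configuration types so obtained is finite. This is the point at which the hypothesis on $\alpha$ is essential — for irrational $\alpha$ the offsets $o_{n}$ are equidistributed, hence dense, in $[0,1)$, infinitely many non-isomorphic configurations appear, and the argument (and, apparently, the conclusion) breaks down. A secondary, more routine obstacle, which the union/quotient modifications of Sections~3--4 are designed to handle, is confirming condition~(H) and hence the bi-Lipschitz identification of $\partial(X,\E)$ with the non-self-similar union $E$.
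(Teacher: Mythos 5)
Your plan is essentially the paper's own proof: the paper also forms the union tree $X=\{o\}\cup\{a,b\}\Sigma^*$ for $\mathcal C\cup(\mathcal C+\alpha)$, proves it is a simple augmented tree when $\alpha$ is rational, and then concludes via the union version of the main theorem (Corollary \ref{th5.2}), with the disjoint case $\alpha>1$ dispatched by Proposition \ref{th2.8} exactly as you do. The only real difference is how simplicity is established, and that is precisely the step you flag as your ``main obstacle'' and do not carry out. The paper does it through the radix expansion $\alpha=\sum_n\alpha_n3^{-n}$ with $\alpha_n\in\{-2,0,2\}$, splitting into two cases: if $\alpha$ has two such expansions (denominator a power of $3$), it rescales $3^{k+1}\bigl(\mathcal C\cup(\mathcal C+\alpha)\bigr)$ into a finite disjoint union of translates of $\mathcal C$ and $\mathcal C\cup(\mathcal C+1)$, the latter handled directly (simple, incidence matrix $\bigl[\begin{smallmatrix}1&2\\0&2\end{smallmatrix}\bigr]$); if the expansion is unique, hence eventually periodic with period $M$, it shows $(a\bi,b\bj)\in\E_h$ iff $i_k-j_k=\alpha_k$ for all $k\le n$, so every horizontal component has at most two vertices, and the shift $a\bi\bu\mapsto a\bi'\bu$, $b\bj\bv\mapsto b\bj'\bv$ between components at levels $n$ and $n+M$ is a graph isomorphism of the descendant subgraphs $T_\D$ --- this is exactly the ``propagation of configurations'' you anticipated. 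Your reformulation via the offsets $o_n=3^n\alpha\bmod 1$ is equivalent (the tail of the digit expansion encodes the offset, and the isomorphism type of $T_\D$ is determined by the relative offset $t\in[-1,1]$ of the pair of cells), so it does complete along the same lines; but be aware of two points of imprecision. First, the level-$n$ horizontal structure is not determined by $o_n$ alone: adjacency also requires the nearby integers to be differences of two $n$-digit base-$3$ numbers with digits in $\{0,2\}$. Second, at levels with $3^n\alpha\in\mathbb Z$ (which for rational $\alpha\in(0,1)$ occurs exactly in the two-expansion case) cells can touch at endpoints on both sides and components need not be pairs --- short chains through touching cells occur --- which is why the paper isolates that case by the rescaling trick instead of treating all rationals uniformly. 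With these points attended to, your argument is a correct, mildly repackaged version of the paper's.
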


\medskip
There have been considerable studies on the intersections of Cantor sets  (see \cite{Fu70, DeHeWe08, ElKeMa10, FeHuRa14} and references therein). However, to our knowledge, there are few results on their unions. Proposition \ref{pro1} is perhaps a new attempt on the Cantor sets.

\medskip

The paper is organized as follows: In Section 2, we briefly review the hyperbolic graphs and the augmented trees to set up the notations,  and derive some basic properties. We define the quasi-rearrangeable matrices
in Section 3, and prove Theorem \ref{th1.1} in Section 4. Finally in Section 5, we apply the main results on the hyperbolic boundaries to self-similar sets and their unions by proving Theorem \ref{th1.2} and Proposition \ref{pro1}.

\bigskip

\section{\bf The augmented tree}

We use the same notations as in \cite{LuLa13}. Let $X$ be an infinite connected graph. For $x, y \in X$, let  $\pi(x, y)$ denote a geodesic from $x$ to $y$, and $d(x,y)$ its length. Let $o$ be a root of the graph, and let $|x| = d(o, x)$.  According to \cite{Wo00}, for $x, y \in X$, let
\begin{equation} \label{eq2.1}
    |x\wedge y|=\frac12\big(|x|+|y|-d(x,y)\big)
\end{equation}
denote the {\it Gromov product}, and  call $X$  {\it hyperbolic} (with respect to $o$) if there is $\delta \geq 0$ such that
$$
 |x\wedge y|\ge \min\{|x\wedge z|,|z\wedge y|\}-\delta \quad \text{for any} \quad x,y,z\in X.
 $$
For $a>0$ with $\exp(\delta a) -1 < \sqrt 2 -1$, we define a {\it hyperbolic metric} on $X$ by
\begin{equation}\label{eq2.2}
    \rho_a(x,y)=\delta_{x,y}\exp(-a|x\wedge y|),
\end{equation}
where $\delta_{x,y}=0,1$ according to $x=y$ or $x\ne y$. Let $\overline{X}$ be the completion of $X$ in the metric $\rho_a$. We call $\pt X= \overline{X}\setminus X$ the {\it hyperbolic boundary} of $X$.  It is clear that $\rho_a$ can be extended to $\pt X$,  and  $\pt X$ is a compact set under $\rho_a$. It is useful to identify $\xi\in\pt X$ with a {\it geodesic ray} in $X$ that converges to $\xi$.

\medskip

Let $X$ be a tree with root $o$.  It is well-known that $X$ is hyperbolic (with $\delta =0$), and the hyperbolic boundary is totally disconnected. We use $\E_v$ to denote the set of edges of $X$ ($v$ for vertical), and $X_n = \{x \in X: |x| =n\}$. We introduce some additional edges on each level of $X$.

\bigskip

\begin{Def} \label{Def'} (\cite{Ka03, LaWa09}) 
Let $(X, \E_v)$ be a tree. We call $(X, \E)$ an augmented tree if $\E=\E_v\cup \E_h$,  where $\E_h\subset (X\times X)\setminus \{(x,x):\ x\in X\}$ is symmetric and satisfies
\begin{equation} \label{Def}
(x,y)\in \E_h\Rightarrow |x|=|y|, \mbox{ and either } x^{-}=y^{-} \mbox{ or } (x^{-},y^{-})\in \E_h.
\end{equation}
($x^{-}$ is the {\it predecessor} of $x$.) We call elements in $\E_h$ horizontal edges.

Furthermore, if each vertex of $X$ has $N$ offsprings, we call  $(X, \E)$ an $N$-ary augmented tree.
\end{Def}

For an $N$-ary tree, it is obvious that we can identify  $X_n$ with $\Sigma^n$ where $\Sigma =\{1, \dots, N\}$,  and  hence $X = \cup_{n=0}^\infty X_n = \cup_{n=0}^\infty \Sigma^n$. We will use both notations whenever convenient.  For $x, y \in X$, the geodesic path of $x, y$ is not unique in general, but there is a canonical one of the form
\begin{equation} \label{eq2.3}
    \pi (x, y) = \pi(x, u) \cup \pi (u,v) \cup \pi (v,y)
\end{equation}
where $\pi (x, u), \pi (v, y)$ are vertical paths, $\pi (u, v)$ is a horizontal path,  and for any geodesic $\pi' (x,y)$, $d(o, \pi (u,v)) \leq d(o, \pi' (x,y))$.  (It can happen that there are only two parts, with $v=y$ or $x = u$.) The following is known (\cite{Ka03, LaWa09}):

\vspace {0.15cm}

{\it An augmented tree is hyperbolic if and only if there is $k>0$ such that the length of the horizontal parts of the canonical geodesics in $X$ is bounded by $k$}.

\vspace {0.15cm}

For $T\subset X_n$, the set of descendants of $T$ (including $T$ itself) is denoted by $T_\D$, i.e.,
$$
T_\D=\{x\in X:\  x|_n\in T\}
$$  where $x|_n$ is the initial segment of $x$ with length $n$.
Note that  if $T$ is connected, then $T_\D$ is a subgraph of $X$. Moreover, if $(X,\E)$ is hyperbolic, then $T_\D$ is also hyperbolic.
We say  that $T$ is an {\it $X_n$-horizontal component} if \ $T\subset X_n$ is a maximal connected subset with respect to ${\E}_h$, and denote $T$ by $\lfloor x\rfloor$ for $x \in T$.  We let $\F_n$ denote the family of all  $X_n$-horizontal components, and let $\F=\cup_{n\ge 0}\F_n$.   Note that for distinct $T,T' \in\F_n$, the subgraphs $T_{\D}, T'_{\D}$ are disjoint. We can define a graph structure on $\F$ as: $\lfloor x\rfloor$ and $\lfloor y\rfloor$ is connected by an edge if and only if  $(x, y) \in \E_v$; we denote this graph by $X_Q$ (see Figure \ref{fig2.1}).  It is clear that  $X_Q$ defined above is a tree, and we call it the {\it quotient tree} of $X$.

\bigskip

\begin{figure}[h]
    \centering
    \includegraphics[width=5in]{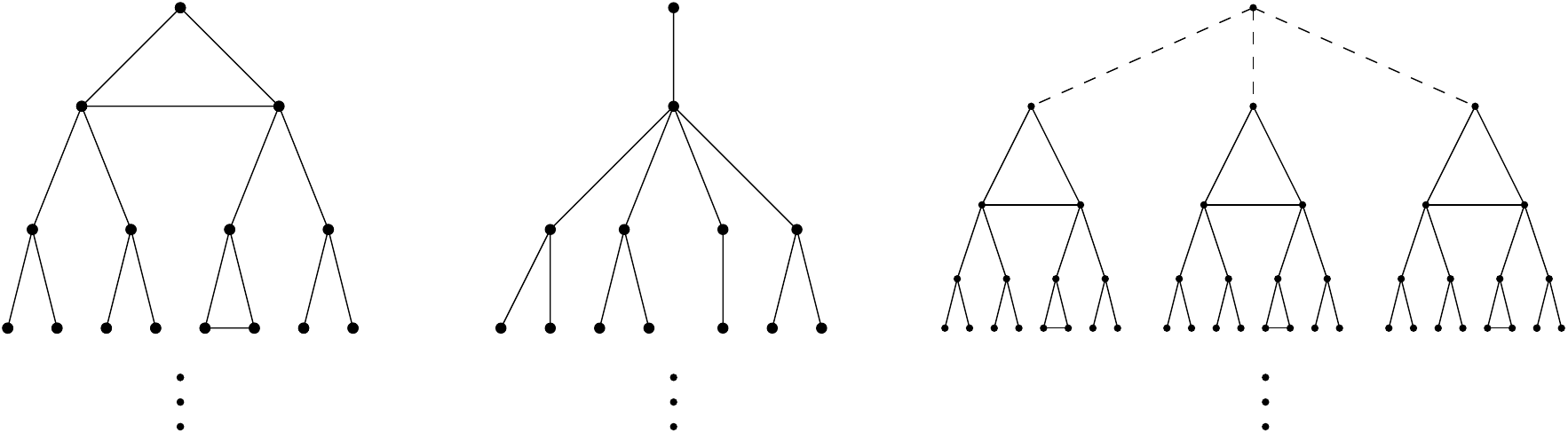}
    \caption{The augmented tree $X$, the quotient tree $X_Q$ and the union of three copies of $X$.}\label{fig2.1}
\end{figure}

For $T, T'\in\F$, we  say  that $T$ and $T'$ are equivalent, denote by $T \sim T'$ if there exists a graph isomorphism $g:\, T_\D\to T'_\D$, i.e., the map $g$ and the inverse map $g^{-1}$ preserve the vertical and horizontal edges of $T_\D$ and $T'_\D$. We denote the equivalence class by $[T]$.

\bigskip

\begin{Def}\label{def of simple tree}
We call an augmented tree $(X, \E)$  {\it simple} if the equivalence classes in $\F$ is finite. Let $[T_1],\dots,[T_m]$ be the equivalence classes in $X \setminus \{o\}$, and let $a_{ij}$  denote the cardinality of the horizontal components of offsprings of $T \in [T_i]$ that belong to $[T_j]$.  We call $A =[a_{ij}]$ the incidence matrix of $(X,\E)$.
\end{Def}

\bigskip

The above definition is a modification from Definition 3.3 of \cite{LuLa13}  (there is an oversight there, nevertheless this adjustment does not affect the proofs in \cite{LuLa13}).  We also adjust slightly the incidence matrix from the previous one ((3.3) in \cite{LuLa13}), as in here $[o]$ is not counted in $A$ as the initial one (it is still possible that there is $[T_j] = [o]$ for some $j$). This change of $A$ does not make any difference for the boundary, but will be more convenient when we consider the subgraph $T_\D$. It can be verified easily that $a_{ij}$ is independent of the choice of $T$.

\bigskip

Note that the incidence matrix $A$ and the quotient tree $X_Q$ are related as the following:  for each  $T = \lfloor x\rfloor \in X_Q$,  say $T\in [T_i]$ for some $i$, then $\lfloor x\rfloor$ has a total of $\sum_j a_{ij}$ offsprings in $X_Q$; for each $j$, there are exactly $a_{ij}$ (ignore  those $=0$) of them that are roots of isomorphic subtrees of $X_Q$. In fact, $X_Q$ is the induced tree by the graph directed system defined by $A$ \cite{MaWi88}.

\begin{Prop} \label{th2.3}
Every simple augmented tree $(X, \E)$ is hyperbolic. Moreover, $ \pt X  \simeq \pt X_Q $, and both of them are totally disconnected.
\end{Prop}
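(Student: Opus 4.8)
The plan is to establish the three assertions in the order: hyperbolicity, then total disconnectedness (which will follow once we know $\partial X_Q$ is totally disconnected), and finally the bi-Lipschitz identification $\partial X \simeq \partial X_Q$. For hyperbolicity, by the criterion quoted above (an augmented tree is hyperbolic iff the horizontal parts of canonical geodesics have uniformly bounded length) it suffices to produce a bound $k$ on the length of any horizontal segment $\pi(u,v)$ appearing in a canonical geodesic. First I would observe that simplicity forces a uniform bound on the diameter (in the horizontal metric) of every horizontal component $T\in\F$: since there are only finitely many equivalence classes $[T_1],\dots,[T_m]$ and each $T_\D$ — in particular the finite graph $T$ — is determined up to isomorphism by its class, the quantity $D:=\max_i \mathrm{diam}(T_i)$ is finite. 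Now a canonical geodesic's horizontal part $\pi(u,v)$ lies inside a single $X_n$-horizontal component (because horizontal edges only connect vertices within one level and, by minimality of $d(o,\pi(u,v))$, the segment cannot drop to a coarser level), so its length is at most $D$. Hence $(X,\E)$ is hyperbolic with the bound $k=D$.

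Next I would set up the comparison between $X$ and $X_Q$. Recall $X_Q$ has vertex set $\F$, is a tree, and a vertex $\lfloor x\rfloor\in\F_n$ sits at level $n$ in $X_Q$ (the quotient collapses each horizontal component to a point but preserves the vertical grading). Define the natural surjection $q\colon X\to X_Q$ by $q(x)=\lfloor x\rfloor$; it is level-preserving and $1$-Lipschitz for the graph metrics, and it maps geodesic rays to geodesic rays. The key estimate is that $q$ roughly preserves Gromov products: for $x,y\in X$ with $|x|=|y|=n$ whose canonical geodesic has horizontal part at level $m=|x\wedge y|$ (so $|x\wedge y|$ lies between $m$ and $m$ up to the $\le D$-length horizontal detour), one checks that $q(x)$ and $q(y)$ first separate in $X_Q$ at level $m$ as well, giving $\big| |x\wedge y| - |q(x)\wedge q(y)| \big| \le D$. (If $\lfloor x|_m\rfloor = \lfloor y|_m\rfloor$ but $\lfloor x|_{m+1}\rfloor\ne\lfloor y|_{m+1}\rfloor$, then the canonical geodesic in $X$ from $x$ to $y$ must traverse a horizontal segment within the component $\lfloor x|_m\rfloor$ and cannot do better than level $m$, while a vertical descent from $x|_m$ to $x$ has no horizontal cost below level $m+1$; the bookkeeping of these two facts yields the bound.) Consequently, in the hyperbolic metrics $\rho_a$ on $\partial X$ and $\rho_a'$ on $\partial X_Q$ (using the same parameter $a$, legitimate since both have $\delta = 0$, resp.\ $\delta=0$ is not needed — we just need $\delta$ small, but a tree and this quotient both admit $\delta=0$; more carefully, $X_Q$ is a tree so $\delta=0$, and $X$ is hyperbolic with some $\delta$, and we pick $a$ valid for both), the induced boundary map $\hat q\colon \partial X\to\partial X_Q$ satisfies
$$
e^{-aD}\,\rho_a(\xi,\eta)\ \le\ \rho_a'(\hat q(\xi),\hat q(\eta))\ \le\ e^{aD}\,\rho_a(\xi,\eta),
$$
so $\hat q$ is bi-Lipschitz onto its image. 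It remains to see $\hat q$ is a bijection: surjectivity is immediate since every geodesic ray in $X_Q$ lifts (choose any representative vertex in each component along the ray and connect by vertical edges, filling horizontal gaps), and injectivity follows because two distinct boundary points of $X$ are separated by their components at some finite level, which are then distinct vertices of $X_Q$. This gives $\partial X \simeq \partial X_Q$.

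Finally, total disconnectedness: $X_Q$ is a tree, and the hyperbolic boundary of a tree (with $\delta=0$) is totally disconnected — indeed the cylinder sets $\{\xi : \xi|_n = w\}$ for $w$ a vertex of $X_Q$ at level $n$ are simultaneously open and closed and form a neighborhood basis, so $\partial X_Q$ is totally disconnected; transporting this through the homeomorphism $\hat q$ shows $\partial X$ is totally disconnected as well. The main obstacle I anticipate is the Gromov-product estimate $\big||x\wedge y|-|q(x)\wedge q(y)|\big|\le D$: one must argue carefully that collapsing horizontal components neither creates spurious early separations nor postpones separations by more than the horizontal diameter $D$, and that the canonical form \eqref{eq2.3} of geodesics in $X$ interacts correctly with the level grading of $X_Q$. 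Everything else is bookkeeping with the definitions of $\rho_a$ and of simplicity. Once this is in place, Theorem~\ref{th1.1} can be proved by working on $X_Q$ and its incidence matrix $A$ rather than on $X$ directly.
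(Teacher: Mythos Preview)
Your proposal is correct and follows essentially the same route as the paper: bound the size of horizontal components using simplicity to get hyperbolicity, then use the level-preserving quotient map $x\mapsto\lfloor x\rfloor$ together with a Gromov-product comparison $\big||x\wedge y|-|\lfloor x\rfloor\wedge\lfloor y\rfloor|\big|\le \text{const}$ to obtain a bi-Lipschitz bijection of boundaries, and finally read off total disconnectedness from the tree $X_Q$. The only cosmetic differences are that the paper uses the cardinality bound $k=\max_i\#T_i$ in place of your diameter $D$, and writes the Gromov-product inequality one-sidedly (their (2.4)) rather than symmetrically; your anticipated ``obstacle'' is handled exactly as you outline, noting that the meet level in $X_Q$ may exceed the level $m$ of the horizontal part of the canonical geodesic in $X$ by at most $D/2$.
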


\begin{proof}
That a simple augmented tree is  hyperbolic was proved in Proposition 3.4 in \cite{LuLa13}. Basically, it follows from  the fact that the length of  horizontal components is uniformly bounded, hence the horizontal part of a geodesic is uniformly bounded, which yields the hyperbolicity of the augmented tree $X$.

 To show  that $\pt X_Q \simeq \pt X$, we note that
$(X, \E)$ is simple, there exists $k>0$ such that the number of vertices in each $T_i$ is bounded by $k$. For $x \in X$, let $\lfloor x\rfloor$ denote the horizontal component that contains $x$.  We define a projection $\tau : X \to X_Q$ by $\tau (x) = \lfloor x\rfloor$. Note that for any $x, y \in X$, the canonical geodesic, as in (\ref{eq2.3}), is $\pi (x, y) = \pi (x, u) \cup \pi (u,v) \cup \pi (v,y).$  This implies $\lfloor u\rfloor = \lfloor v \rfloor$, and $\pi (\lfloor x\rfloor, \lfloor y \rfloor) = \pi (\lfloor x\rfloor,\lfloor u \rfloor) \cup \pi (\lfloor u \rfloor, \lfloor y\rfloor)$. Since
$d(u, v) \leq k$, by (\ref{eq2.1}) and (\ref{eq2.2}), we have
\begin{equation} \label{eq2.4}
\big |\lfloor x\rfloor \wedge \lfloor y\rfloor \big | \leq |x \wedge y| \leq \big |\lfloor x \rfloor \wedge \lfloor y\rfloor \big | + k,
\end{equation}
and for $\lfloor x\rfloor \ne \lfloor y\rfloor$,
\begin{equation} \label{eq2.5}
c\rho_a (\lfloor x\rfloor , \lfloor y\rfloor) \leq \rho_a(x, y) \leq  \rho_a(
\lfloor x\rfloor, \lfloor y\rfloor)
\end{equation}
where $c= e^{-ka}$. Hence we can extend $\tau : \overline X \to \overline X_Q$ continuously. It is clear that $\tau: \pt X \to \pt X_Q$ is surjective.  We claim that it is also one-to-one.  Note that in a hyperbolic boundary, two geodesic rays $\pi (x_1, x_2, \dots)$ and $\pi (y_1, y_2, \dots )$ represent the same $\xi \in \pt X$ if and only if $|x_n\wedge y_n| \to \infty$  as $n\to\infty$ \cite{Wo00}.  Hence  for $\xi \ne \eta$ in $\pt X$, there exist geodesic rays $\pi (x_1, x_2, \dots)$ and $\pi (y_1, y_2, \dots )$ representing $\xi$ and $\eta$ respectively and $|x_n \wedge y_n| \not \to \infty$ as $n\to\infty$.  It follows from (\ref{eq2.4}) that $\big|\lfloor x_n\rfloor \wedge \lfloor y_n\rfloor\big | \not \to \infty$. This implies $\lfloor \xi \rfloor\ne \lfloor \eta \rfloor$ in $\pt X_Q$. The conclusion that $\pt X \simeq \pt X_Q$ follows by extending (\ref{eq2.5}) to the boundaries.

Since $X_Q$ is a tree, whose boundary is a Cantor-type set, it follows that both of $\pt X$ and $\pt X_Q$ are totally disconnected by the above argument. 
\end{proof}

\bigskip

\begin{Cor}\label{th2.4}
Let $X,Y$ be two simple augmented trees and have the same incidence matrix $A$. Then $\pt X\simeq\pt Y$.
\end{Cor}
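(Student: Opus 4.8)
The plan is to pass to the quotient trees and exploit that $X_Q$ and $Y_Q$ are both realizations of the graph directed system attached to $A$. By Proposition \ref{th2.3} we have $\pt X\simeq\pt X_Q$ and $\pt Y\simeq\pt Y_Q$, so it suffices to prove $\pt X_Q\simeq\pt Y_Q$. Both $X_Q$ and $Y_Q$ are locally finite rooted trees, so their boundaries are Cantor-type sets: under $\rho_a$, two boundary points whose geodesics to the root first separate at a level-$n$ vertex lie at distance $e^{-an}$, and the boundary of the subtree hanging below a vertex $v$ appears inside $\pt X_Q$ as a clopen set on which the induced metric equals $e^{-a|v|}$ times its own boundary metric. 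In particular, cutting $\pt X_Q$ at the children of the root exhibits it as a finite disjoint union of clopen pieces that are pairwise at distance $1$ while each has diameter at most $e^{-a}$; since a bi-Lipschitz map between two such ``well-separated'' unions can be assembled piece by piece, it is enough to produce a Lipschitz equivalence between $\pt X_Q$ and $\pt Y_Q$ respecting such decompositions into well-separated clopen pieces (allowing further subdivision).

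Next I would identify the pieces. I claim the rooted subtree of a quotient tree hanging below a horizontal component $T$ is determined, up to isomorphism, by the class $[T]$ and by $A$. Indeed, if $T\sim T'$ then $T_\D\cong T'_\D$ as graphs by the definition of $\sim$, and any such isomorphism maps horizontal components onto horizontal components and preserves vertical edges, hence descends to an isomorphism of the corresponding subtrees of the quotient tree; moreover, if $[T]=[T_i]$ then $\lfloor T\rfloor$ has $\sum_j a_{ij}$ children in the quotient tree, of which exactly $a_{ij}$ lie in class $[T_j]$. Since a locally finite rooted tree is determined by the multiset of rooted subtrees hanging below the children of its root, an induction on the number of levels shows that this subtree is a tree $G_i$ depending only on $i$ and on $A$. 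Therefore $\pt X_Q$ and $\pt Y_Q$ are each Lipschitz equivalent to a finite, well-separated disjoint union of copies of the Cantor-type sets $\pt G_1,\dots,\pt G_m$, and the two descriptions can disagree only in the multiplicities with which the $\pt G_j$ occur, because the level-$1$ horizontal components of $X$ and of $Y$ need not carry the same classes.

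The main obstacle is to show that the multiplicities are irrelevant. Since $[T_1],\dots,[T_m]$ are precisely the classes occurring in $X\setminus\{o\}$, each $[T_j]$ occurs as a descendant of some level-$1$ component, so a rescaled clopen copy of $\pt G_j$ lies inside $\pt X_Q$, and likewise inside $\pt Y_Q$; moreover, unfolding one level of $G_i$ shows that $\pt G_i$ is bi-Lipschitz to a well-separated disjoint union of $a_{ij}$ copies of $\pt G_j$, $j=1,\dots,m$. Feeding these self-referential relations into a Schr\"oder--Bernstein type argument for Lipschitz equivalence of well-separated finite disjoint unions of Cantor-type sets (in the spirit of the combinatorics in \cite{LuLa13}), one deduces that any two such unions built from $\pt G_1,\dots,\pt G_m$ in which every $\pt G_j$ actually occurs are Lipschitz equivalent; applied to $\pt X_Q$ and $\pt Y_Q$ this completes the proof. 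I expect this matching of multiplicities to be the delicate point; it is also precisely what Theorem \ref{th1.1} subsumes. Indeed, when $X$ and $Y$ are $N$-ary (the setting of Theorem \ref{th1.1}), a class-$[T_i]$ component has $N|T_i|$ offspring vertices, split into $a_{ij}$ components of class $[T_j]$ with $|T_j|$ vertices each, so $\sum_j a_{ij}|T_j|=N|T_i|$; since all $|T_i|>0$, the integer $N$ is the spectral radius of $A$ and hence is determined by $A$. Thus $X$ and $Y$ are simple $N$-ary augmented trees with the same $N$, and Theorem \ref{th1.1} gives $\pt X\simeq(\text{the $N$-Cantor set})\simeq\pt Y$; once that theorem is available, this is the cleanest argument.
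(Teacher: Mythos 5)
Your opening reduction, passing to the quotient trees via Proposition \ref{th2.3}, is exactly how the paper begins, but from there the paper's proof is a one-line observation that you never make: since $X_Q$ is the tree induced by the graph-directed system defined by $A$ (see the remark preceding Proposition \ref{th2.3}), having the same incidence matrix is read as saying that $X_Q$ and $Y_Q$ are graph isomorphic, hence $\pt X_Q\simeq\pt Y_Q$, and Proposition \ref{th2.3} finishes the proof. You instead set out to prove something much stronger --- that the multiplicities with which the classes $[T_j]$ occur at level one are irrelevant --- and this is where your argument has a genuine gap. The ``Schr\"oder--Bernstein type argument'' is asserted, not given, and it is exactly where the difficulty of this paper lives: when $A$ is reducible (e.g.\ Example \ref{exa5.3}, with $A=\left[\begin{smallmatrix}1&1\\0&3\end{smallmatrix}\right]$), the relations $\pt G_i\simeq\bigsqcup_j a_{ij}$ copies of $\pt G_j$ do not let you trade copies of one class for another; making such rearrangements legitimate is the content of the quasi-rearrangeable machinery of Section 3 and Lemma \ref{th4.3}, so your sketch silently assumes the hardest part of the paper. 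Worse, the lemma you want is false at the generality at which you state it: let $X$ be a single infinite ray and $Y$ two rays glued at a common root, both with $\E_h=\emptyset$; both are simple with the same incidence matrix $A=[1]$, every $\pt G_j$ occurs in both boundaries, yet $\pt X$ is one point and $\pt Y$ is two points. (This degenerate example also shows that your underlying observation is fair --- $A$ alone does not record the root's offspring, so $X_Q\cong Y_Q$ is not automatic --- but the cure is to read ``same incidence matrix'' as the paper implicitly does, namely as matching the quotient trees including the level-one data, not to prove that multiplicities never matter.)

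Your fallback for the $N$-ary case --- note that $N$ is forced by $A$ because $A\bu=N\bu$ with $\bu>0$ (Proposition \ref{th2.7}), then quote Theorem \ref{th1.1} for both trees --- is logically sound and not circular (Corollary \ref{th2.4} is not used in proving Theorem \ref{th4.4}), but it covers only $N$-ary augmented trees, whereas the corollary is stated for arbitrary simple augmented trees and is placed in Section 2, before that machinery exists; it also amounts to deferring the proof to the paper's main theorem rather than giving the intended elementary argument. So as a self-contained proof the proposal is incomplete at its central step, and the completed version exists only as an appeal to Theorem \ref{th1.1}.
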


\begin{proof}  
It follows from the assumption that $X_Q$ and $Y_Q$ are graph isomorphic so that $\partial X_Q\simeq \partial Y_Q$. The corollary follows from Proposition \ref{th2.3}.
\end{proof}

\bigskip

\begin{Def} \label{def2.1}
Let $X,Y$ be two hyperbolic graphs. We say that $\sigma$ is a  near-isometry of $X$ and $Y$ if there exist finite subsets $E\subset X$, $F\subset Y$, and $c>0$  such that $\sigma: X \setminus E \to Y\setminus F$ is a bijection and satisfies
$$
\big| d(\sigma(x),\sigma(y))-d(x,y)\big|<c.
$$
\end{Def}

\bigskip

We remark that this definition of near-isometry is a slight relaxation of the one in \cite{LuLa13} by allowing an exception of finite sets. Actually, we can allow the sets $E, F$ to be countable as long as in the boundaries, the limit points from $E$ and $F$ are the limit points of $X\setminus E$ and $Y\setminus F$ respectively.  The proof of the following proposition is the same as in \cite{LuLa13} with some obvious modifications.

\bigskip

\begin{Prop}\label{th2.6}
Let $X,Y$ be two hyperbolic augmented trees.  Suppose there exists a near-isometry from $X$ to $Y$, then $\pt X\simeq \pt Y$.
\end{Prop}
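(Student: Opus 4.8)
The plan is to promote the near-isometry $\sigma$ to a bi-Lipschitz bijection between the visual metrics $\rho_a$ and then extend it across the boundaries; relative to \cite{LuLa13} the only new feature is that $\sigma$ is defined only off the finite sets $E,F$. Fix $a>0$ small enough that $\rho_a$ is a metric on both $\overline X$ and $\overline Y$ (use $\delta=\max\{\delta_X,\delta_Y\}$), and let $o_X,o_Y$ be the roots. \emph{Step 1.} Pick a vertex $p\in X\setminus E$. Using $\big|\,|x|-d(p,x)\,\big|\le|p|$ in $X$, the analogous $\big|\,|\sigma(x)|-d(\sigma(p),\sigma(x))\,\big|\le|\sigma(p)|$ in $Y$, and $|d(\sigma(p),\sigma(x))-d(p,x)|<c$, one gets a constant $c_1$ with $\big|\,|\sigma(x)|-|x|\,\big|\le c_1$ on $X\setminus E$; feeding this and the defining inequality into (\ref{eq2.1}) gives a constant $c_2$ with $\big|\,|\sigma(x)\wedge\sigma(y)|-|x\wedge y|\,\big|\le c_2$ for all $x,y\in X\setminus E$. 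Since $\sigma$ is a bijection, (\ref{eq2.2}) then yields
\[
e^{-ac_2}\,\rho_a(x,y)\ \le\ \rho_a(\sigma(x),\sigma(y))\ \le\ e^{ac_2}\,\rho_a(x,y),
\]
so $\sigma$ is a bi-Lipschitz bijection of $(X\setminus E,\rho_a)$ onto $(Y\setminus F,\rho_a)$.

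\emph{Step 2.} The single fact I need now is that, in $\rho_a$, no vertex is the limit of a sequence of pairwise distinct vertices (immediate from $|v'\wedge v|\le|v|<\infty$ for $v'\ne v$). It follows that the closure of $X\setminus E$ in $\overline X$ is exactly $(X\setminus E)\cup\pt X$, and similarly for $Y$: every $\xi\in\pt X$ is a limit of pairwise distinct vertices (as $X$ is dense in $\overline X$ and $\xi\notin X$) which must eventually lie in $X\setminus E$, while no vertex of $E$ can be such a limit. A bi-Lipschitz bijection between subsets of complete (here compact) metric spaces extends uniquely to a bi-Lipschitz bijection of their closures, so $\sigma$ extends to $\bar\sigma\colon(X\setminus E)\cup\pt X\to(Y\setminus F)\cup\pt Y$. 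For $\xi\in\pt X$, writing $\xi=\lim x_n$ with the $x_n\in X\setminus E$ pairwise distinct gives $\bar\sigma(\xi)=\lim\sigma(x_n)$ with the $\sigma(x_n)$ pairwise distinct, so $\bar\sigma(\xi)$ is not a vertex and hence lies in $\pt Y$; the same argument applied to $\sigma^{-1}$ (also a near-isometry) gives $\bar\sigma(\pt X)=\pt Y$. Thus $\bar\sigma|_{\pt X}\colon\pt X\to\pt Y$ is a bi-Lipschitz bijection and $\pt X\simeq\pt Y$.

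The Gromov-product estimate of Step 1 and the extension-to-completions argument are routine and essentially as in \cite{LuLa13}; the only point needing a little care is the handling of the exceptional sets in Step 2---checking that deleting the finite set $E$ removes no boundary point, and that $\bar\sigma$ sends $\pt X$ onto $\pt Y$ rather than into the vertex set $Y\setminus F$. For the countable variant mentioned in the remark after Definition \ref{def2.1}, this is precisely why one requires the limit points arising from $E$ (resp. $F$) to coincide with those of $X\setminus E$ (resp. $Y\setminus F$); with that hypothesis, nothing else in the argument changes.
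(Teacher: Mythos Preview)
Your argument is correct and is precisely what the paper has in mind: the paper does not give a proof but simply says it is ``the same as in \cite{LuLa13} with some obvious modifications,'' and your Steps 1--2 are exactly those modifications---the Gromov-product/bi-Lipschitz estimate is verbatim from \cite{LuLa13}, and Step 2 supplies the one new point, namely that deleting the finite sets $E,F$ does not disturb the boundaries. One minor quibble: your parenthetical ``(here compact)'' for $\overline X$ is not needed and not obviously true; completeness of $\overline X$ (which holds by definition) is all the extension argument uses.
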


\bigskip
 The following is a crucial algebraic property of a simple $N$-ary augmented tree, the proof follows easily from the definition.

\medskip

\begin{Prop}\label{th2.7}
Let  $(X, \E)$ be a simple  $N$-ary augmented tree, let $\{[T_1],\dots, [T_m]$ be the equivalence classes with incidence matrix $A$, and let $\bu =[u_1, \dots, u_m]^t$ where $u_i=\#T$ for $T\in [T_i]$. Then $A\bu = N \bu$.
\end{Prop}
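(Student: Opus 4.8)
The plan is to count, in two ways, the offspring vertices of a single horizontal component. Fix $i\in\{1,\dots,m\}$ and a representative $T\in[T_i]$, say $T\subset X_n$, so that $\#T=u_i$. Since $(X,\E)$ is $N$-ary, each of the $u_i$ vertices of $T$ has exactly $N$ offsprings in $X_{n+1}$, and distinct vertices of a tree have disjoint offspring sets; hence the set $S$ of all offsprings of $T$ in $X_{n+1}$ has $\#S=Nu_i$. On the other hand, I will show that $S$ is the disjoint union of the $X_{n+1}$-horizontal components it contains, and that these are precisely the ``children'' components of $T$ enumerated by the $i$-th row of the incidence matrix; comparing the two counts will give the assertion.

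The one point needing an argument is the following closure property: if $x\in X_{n+1}$ with $x^-\in T$, then $\lfloor x\rfloor\subset S$. Let $y\in\lfloor x\rfloor$ and take a horizontal path $x=z_0,z_1,\dots,z_\ell=y$ in $X_{n+1}$. I claim $z_k^-\in T$ for all $k$, by induction on $k$. This holds for $k=0$ since $z_0^-=x^-\in T$. Assuming $z_k^-\in T$, the axiom \eqref{Def} applied to $(z_k,z_{k+1})\in\E_h$ gives either $z_{k+1}^-=z_k^-\in T$, or $(z_k^-,z_{k+1}^-)\in\E_h$, in which case $z_{k+1}^-$ is horizontally adjacent to a vertex of the horizontal component $T$ and hence, by maximality of $T$, lies in $T$. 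In particular $y^-\in T$, so $y\in S$. Thus every vertex of $S$ lies in an $X_{n+1}$-horizontal component contained in $S$; conversely any $X_{n+1}$-horizontal component all of whose predecessors lie in $T$ is contained in $S$. Since distinct horizontal components are disjoint, $S$ is partitioned by these children components of $T$.

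It remains to sort the count by equivalence classes. By Definition \ref{def of simple tree}, exactly $a_{ij}$ of the children components of $T$ lie in $[T_j]$ (and this number is independent of the chosen $T\in[T_i]$). Moreover each child component $T'\in[T_j]$ has $\#T'=u_j$: an isomorphism $T'_\D\to (T_j)_\D$ preserving vertical and horizontal edges maps the subgraph of vertical edges of $T'_\D$ onto that of $(T_j)_\D$, and in each of these vertical subgraphs the top-level vertices (those of $T'$, resp.\ of $T_j$) are precisely the ones of vertical-degree $N$ rather than $N+1$, so the isomorphism restricts to a bijection $T'\to T_j$ and $\#T'=\#T_j=u_j$. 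Summing $\#T'$ over the children components of $T$ and using the partition above,
\[
Nu_i \;=\; \#S \;=\; \sum_{j=1}^m a_{ij}\,u_j \;=\; (A\bu)_i .
\]
As $i$ was arbitrary, $A\bu=N\bu$. The substantive ingredient is the closure property $\lfloor x\rfloor\subset S$, which is exactly where the augmented-tree axiom \eqref{Def} and the maximality built into the notion of horizontal component enter; I expect it to be the only step requiring real care, everything else being bookkeeping.
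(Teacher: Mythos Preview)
Your argument is correct and is exactly the double count the paper has in mind; the paper itself omits the proof, saying only that it ``follows easily from the definition.'' Your write-up supplies the two points that deserve a line of justification --- the closure of the offspring set $S$ under horizontal components (via axiom~\eqref{Def} and maximality) and the well-definedness of $u_j=\#T'$ for $T'\in[T_j]$ (via the vertical-degree characterization of the top level under a graph isomorphism) --- and the rest is indeed bookkeeping.
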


\bigskip

Let $X_i, 1\le i \le \ell$ be augmented trees  with roots $o_i$.  Let $\widehat{X}=(\cup_{i=1}^\ell X_i)\cup \{o\}$ where $o$ is an additional vertex. We equip $\widehat{X}$  with an edge set $\widehat{\mathcal E}$ that includes all ${\mathcal E}_i$ and the new edges joining $o$ and $o_i$. Then $(\widehat{X}, \widehat{\mathcal E})$ forms a new connected graphs and each $(X_i, {\mathcal E}_i)$ becomes its subgraph (see Figure \ref{fig2.1}). We call $(\widehat{X}, \widehat{\mathcal E})$ {\it the union of $\{X_i\}_{i=1}^\ell$}. Occasionally we use $\cup_{i=1}^\ell(X_i, {\mathcal E}_i)$ or $\cup_{i=1}^\ell  X_i$ to denote $(\widehat{X}, \widehat{\mathcal E})$ for clarity.  The following proposition is useful.

\bigskip

\begin{Prop}\label{th2.8}
Let  $(X, \E)$ be an  $N$-ary augmented tree such that $\partial (X, \E)\simeq \partial (X, \E_v)$. Suppose $(X_i,\E_i), 1\le i \le \ell $, are copies of $(X,{\E})$, and $(\widehat X,\widehat {\E})$ is the union of $\{(X_i,\E_i)\}_{i=1}^\ell$. Then $\pt(\widehat X,\widehat\E)\simeq \pt(X,\E)$.
\end{Prop}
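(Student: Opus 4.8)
The whole proposition should come down to the single fact that $\ell$ disjoint copies of an $N$-Cantor set are Lipschitz equivalent to one $N$-Cantor set; once that is isolated the rest is bookkeeping.

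First I would analyze $\partial(\widehat X,\widehat{\mathcal E})$. Because distinct copies $X_i$ and $X_j$ are joined in $(\widehat X,\widehat{\mathcal E})$ only through the new root $o$, every path between a vertex of $X_i$ and a vertex of $X_j$ must pass through $o$; hence for $\xi\in\partial(X_i,{\mathcal E}_i)$ and $\eta\in\partial(X_j,{\mathcal E}_j)$ with $i\neq j$ one has $|\xi\wedge\eta|=0$, so $\rho_a(\xi,\eta)=1$, while inside a single copy $|x|_{\widehat X}=1+|x|_{X_i}$ and the geodesics are unchanged, so $\rho_a$ restricted to $\partial(X_i,{\mathcal E}_i)$ is exactly $e^{-a}$ times the intrinsic metric of $\partial(X_i,{\mathcal E}_i)\cong\partial(X,{\mathcal E})$. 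Thus $\partial(\widehat X,\widehat{\mathcal E})$ is a disjoint union of $\ell$ uniformly separated isometric copies of $e^{-a}\,\partial(X,{\mathcal E})$. Feeding in the hypothesis $\partial(X,{\mathcal E})\simeq\partial(X,{\mathcal E}_v)=:{\mathcal C}$, an $N$-Cantor set, and the elementary fact that a finite union of uniformly separated pieces respects Lipschitz equivalence piecewise, I get $\partial(\widehat X,\widehat{\mathcal E})\simeq\bigsqcup_{i=1}^{\ell}{\mathcal C}$. Since also $\partial(X,{\mathcal E})\simeq{\mathcal C}$, the proposition reduces to proving $\bigsqcup_{i=1}^{\ell}{\mathcal C}\simeq{\mathcal C}$.

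To prove this last equivalence I would realize $\bigsqcup^\ell{\mathcal C}$ as $\partial(\widehat Y,{\mathcal E}_v)$, where $\widehat Y$ is the tree whose root has $\ell$ children, each spawning the $N$-ary tree $Y$ (so ${\mathcal C}=\partial(Y,{\mathcal E}_v)$), and then construct a \emph{near-isometry} $\sigma\colon\widehat Y\to Y$, after which Proposition \ref{th2.6} finishes. Both trees are spherically homogeneous with the same per-level contraction $e^{-a}$, and their level cardinalities $\ell N^{n-1}$ and $N^{n}$ differ only by the fixed factor $\ell/N$; this lets one prescribe, level by level, how many vertices of level $n$ of $\widehat Y$ go to each of the nearby levels of $Y$ (a bounded counting problem that is solvable precisely because this ratio is bounded), carry it out compatibly with the predecessor relation, and so obtain a bijection off finite exceptional sets that shifts every level by a bounded amount and distorts the tree metric by at most an additive constant. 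Alternatively, and more in the spirit of the paper, one may group the $N^{n}$ vertices on level $n$ of the $N$-ary tree into $\ell N^{n-1}$ horizontal components in a self-consistent fashion so as to obtain a simple $N$-ary augmented tree $Z$ with quotient tree $Z_Q\cong\widehat Y$; then Proposition \ref{th2.3} gives $\partial Z\simeq\partial Z_Q=\bigsqcup^\ell{\mathcal C}$, while Theorem \ref{th1.1} gives $\partial Z\simeq\partial(Z,{\mathcal E}_v)={\mathcal C}$.

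The main obstacle is exactly the step $\bigsqcup^\ell{\mathcal C}\simeq{\mathcal C}$. The tempting shortcut — partition ${\mathcal C}$ into $\ell$ clopen pieces each bi-Lipschitz to ${\mathcal C}$ — works only when $\ell\equiv1\pmod{N-1}$, since complete antichains of the $N$-ary tree have cardinality $\equiv1\pmod{N-1}$, and every purely combinatorial reduction (merging $N$ copies into one via ${\mathcal C}\simeq\bigsqcup^{N}{\mathcal C}$, or passing to the $N^{k}$-ary picture) preserves this residue. Thus one genuinely needs either a near-isometry of the two homogeneous trees that ignores the cylinder structure, or the horizontal-component construction above invoking Theorem \ref{th1.1}; the essential point is that Theorem \ref{th1.1} handles incidence matrices such as $N\,\mathrm{Id}$ that are reducible and hence fall outside \cite{LuLa13}. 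With $\bigsqcup^\ell{\mathcal C}\simeq{\mathcal C}$ established, the chain $\partial(\widehat X,\widehat{\mathcal E})\simeq\bigsqcup^\ell{\mathcal C}\simeq{\mathcal C}\simeq\partial(X,{\mathcal E})$ gives the conclusion.
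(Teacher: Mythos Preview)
Your primary route is correct and is essentially the paper's: reduce to $\partial(\widehat X,\widehat{\mathcal E}_v)\simeq\partial(X,{\mathcal E}_v)$ (equivalently $\bigsqcup^\ell{\mathcal C}\simeq{\mathcal C}$) and then exhibit a near-isometry between the two trees. The paper, however, writes the near-isometry down explicitly rather than arguing its existence by a level-counting heuristic: it enumerates
\[
I=\{1,\dots,N-1;\,N1,\dots,N(N-1);\,N^21,\dots,N^2(N-1);\dots\}\subset X,
\qquad
J=\{o_1,\dots,o_{\ell-1}\}\cup\{o_\ell\mathbf i_s:\mathbf i_s\in I\}\subset\widehat X,
\]
sets $\sigma(\mathbf i_s)=\mathbf j_s$, extends by $\sigma(\mathbf i_s\mathbf u)=\mathbf j_s\mathbf u$ (and $\sigma(N^{i+1})=o_\ell N^i$), and checks the additive distortion is at most $[\ell/N]+1$. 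Your ``bounded level-shift'' sketch is pointing at exactly this construction but is vague where the paper is concrete; in particular, ``compatibly with the predecessor relation'' is precisely what the enumeration via the spine $N^k$ achieves.

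One genuine issue: your alternative route through Theorem~\ref{th1.1} is circular in this paper. Proposition~\ref{th2.8} is used in the proof of Proposition~\ref{th4.2} and, via Remark~\ref{remark 4.3}, in Lemma~\ref{th4.3}; these are the ingredients of Theorem~\ref{th4.4}, which \emph{is} Theorem~\ref{th1.1}. So you cannot invoke Theorem~\ref{th1.1} at this stage. (Independently of circularity, the proposed grouping of $N^n$ vertices on level $n$ into $\ell N^{n-1}$ horizontal components with $Z_Q\cong\widehat Y$ already fails at level~$1$ whenever $\ell>N$.) Drop the alternative and keep the explicit near-isometry.
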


\begin{proof}
It is easy to see that for $\partial (X_i, \E)\simeq \partial (X_i, \E_v)$, the disjoint union implies $\pt(\widehat X,\widehat\E)\simeq \pt (\widehat X,\widehat\E_v)$. Hence it suffices to prove the proposition for $\pt(\widehat X,\widehat\E_v)\simeq \pt(X,\E_v)$.

\vspace{0.1cm}

Let $X=\bigcup_{n=0}^{\infty}\Sigma^n$ where $\Sigma=\{1,\dots, N\}$. Consider a subset of vertices of $X$:
$$
I=\{{\mathbf i_s}\}_{s=1}^{\infty}=\{1,\dots, N-1; N1,\dots,N(N-1);N^21,\dots,N^2(N-1);\dots\}.
$$
Similarly, for the vertices of the union $\widehat{X}$, denote by
$$
J:=\{{\mathbf j_s}\}_{s=1}^{\infty}:=\{o_1,\dots, o_{\ell-1}\}\cup \{o_{\ell}{\mathbf i}_s: {\mathbf i}_s\in I\}.
$$
Define a map $\sigma: I\to J$ by $\sigma({\mathbf i}_s)={\mathbf j}_s$  (see Figure \ref{union tree}),  and extend it to
$$
\sigma: {X}\setminus \{o\} \to \widehat X\setminus \{o\}
$$
by $\sigma({\mathbf i}_s{\mathbf u})={\mathbf j}_s{\mathbf u}$ for ${\mathbf u}\in X$, and $\sigma(N^{i+1})= o_\ell N^i$ for $i=0,1,2,\dots$ (this last part of $\sigma$ is not essential in view of the remark after Definition {\ref {def2.1}}). Then the map is bijective and  satisfies
$$
\big|d(\sigma(x), \sigma(y))-d(x,y) \big|\leq \left[{\ell}/{N}\right]+1, \quad \forall  \  x,y\in {X}\setminus\{o\},
$$
where $[{\ell}/{N}]$ denotes the largest integer not greater than ${\ell}/{N}$. This can be verified immediately on $I$ first, and then for arbitrary $x,y$. Therefore, $\sigma$ is a near-isometry, and  the result follows by Proposition \ref{th2.6}.

\begin{figure}[h]
    \centering
    \includegraphics[width=5in]{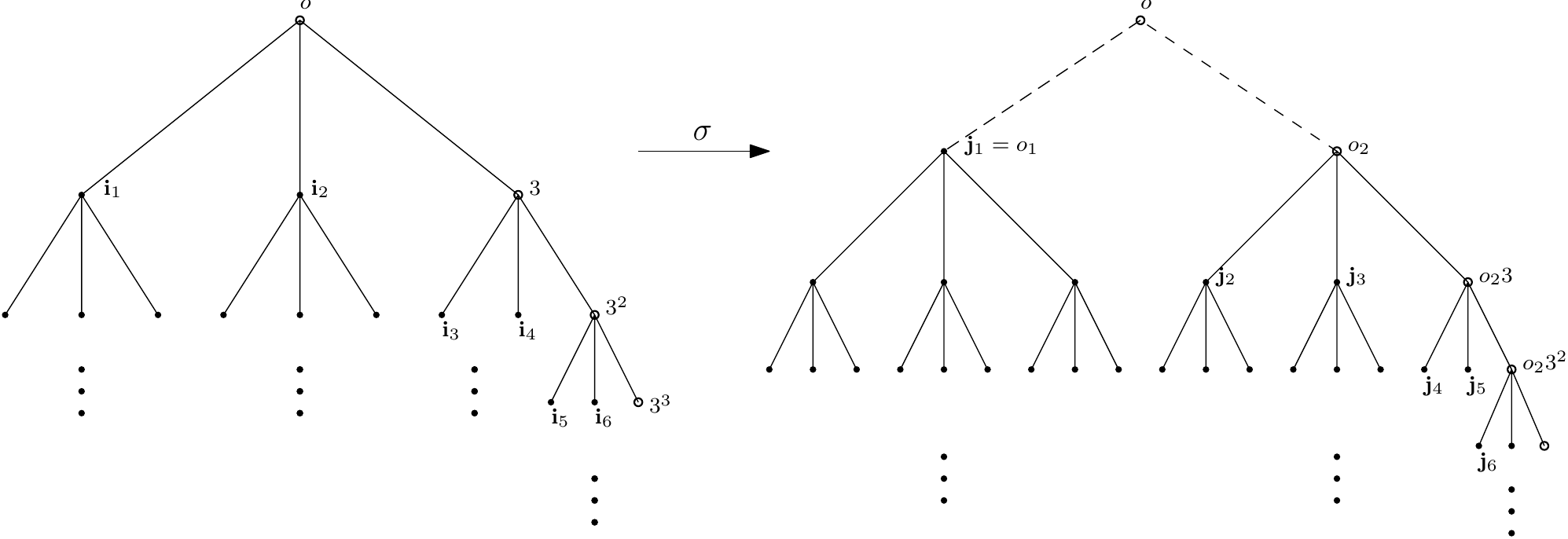}
\caption{An illustration of the map $\sigma: I\to J$ by letting $N=3, \ell=2$.}\label{union tree}
\end{figure}

\end{proof}

\bigskip

There is another useful variance of an augmented tree. Let $(X, \E)$ be an $N$-ary augmented tree. For $k>1$, we write $X^{(k)}=\cup_{n=0}^\infty X_{kn}$, then $X^{(k)}$ is a $kN$-ary tree. We define the horizontal edges on the $n$-th level of $X^{(k)}$ to be the same as the the $kn$-th level in $X$, and denote the  induced edge set by $\E_h$ as well. Let $\E = \E_v \cup \E_h$ on $X^{(k)}$, then the following proposition is immediate.

\bigskip

\begin{Prop}\label{th2.9}
Let $X$ be an $N$-ary tree, and $(X, \E)$ be a simple augmented tree with an incidence matrix  $A$. Then for $(X^{(k)}, \E)$ defined as above, the incidence matrix is $A^k$, and if we take the hyperbolic metric $\rho_a$ and $\rho_{ka}$ on the respective spaces, then $\pt (X, \E) = \pt(X^{(k)}, \E)$
\end{Prop}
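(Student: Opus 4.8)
The plan is to prove the two assertions separately; both are largely bookkeeping, and I expect the only delicate point to be a distance comparison needed for the second one. \emph{Identifying the incidence matrix.} I would start from a horizontal component $T\subset X_{kn}$ with $T\in[T_i]$: regarded as a vertex of $X^{(k)}$ at level $n$, its offsprings in $X^{(k)}$ are exactly the $X_{k(n+1)}$-horizontal components contained in $T_\D$. The key lemma to prove is that, for every $m\ge 1$, the number of $X_{kn+m}$-horizontal components inside $T_\D$ lying in the class $[T_j]$ equals $(A^m)_{ij}$. This goes by induction on $m$: the case $m=1$ is the definition of $A$; for the step one observes, iterating the augmented-tree axiom \eqref{Def} through the predecessor map, that every $X_{kn+m+1}$-horizontal component inside $T_\D$ lies inside $S_\D$ for a unique $X_{kn+m}$-horizontal component $S\subset T_\D$, and that such an $S$, if it belongs to $[T_\ell]$, contributes exactly $a_{\ell j}$ of them, so summing over $S$ gives $\sum_\ell (A^m)_{i\ell}a_{\ell j}=(A^{m+1})_{ij}$. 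Taking $m=k$ yields the incidence matrix $A^k$. I would also remark here that $(X^{(k)},\E)$ is again simple, since any level-preserving graph isomorphism $T_\D\to T'_\D$ in $X$ restricts to one between the corresponding subgraphs of $X^{(k)}$, so $X^{(k)}$ has no more equivalence classes than $X$; indexing them by the classes of $X$ makes $A^k$ the transition matrix, which is the natural indexing (if two $X$-classes happen to merge in $X^{(k)}$, one simply passes to the corresponding quotient of $A^k$).

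\emph{Identifying the boundary.} The idea is that $X^{(k)}\subset X$ is a rough similarity of ratio $k$ with $k$-dense image, which forces the hyperbolic boundaries to agree once the parameter is rescaled from $a$ to $ka$. For $x,y\in X^{(k)}$ one has $|x|_{X^{(k)}}=|x|_X/k$, and the estimate to establish is
\[
\frac{d_X(x,y)}{k}\ \le\ d_{X^{(k)}}(x,y)\ \le\ \frac{d_X(x,y)}{k}+C ,
\]
with $C$ depending only on the uniform bound $k_0$ on the cardinality of horizontal components, which exists since $X$ is simple (cf. the proof of Proposition \ref{th2.3}). The lower bound is clear: replacing each vertical edge of an $X^{(k)}$-path by the corresponding length-$k$ vertical path of $X$ turns a path of length $L$ in $X^{(k)}$ into one of length $\le kL$ in $X$. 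For the upper bound I would take the canonical geodesic $\pi(x,y)=\pi(x,u)\cup\pi(u,v)\cup\pi(v,y)$ of $X$, write $\ell=|u|_X$, and replace $u,v$ by their ancestors $\tilde u,\tilde v$ at level $k\lfloor\ell/k\rfloor$; iterating \eqref{Def} downward, $\tilde u=\tilde v$ or $(\tilde u,\tilde v)\in\E_h$, so $\tilde u,\tilde v$ lie in one horizontal component of size $\le k_0$ whose connecting path is already a path of $X^{(k)}$, and routing $x\to\tilde u\to\tilde v\to y$ inside $X^{(k)}$ (losing at most $k-1$ at each end from the rounding) gives $C=k_0+2$. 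Feeding this into the Gromov product \eqref{eq2.1} yields $|x\wedge y|_{X^{(k)}}=|x\wedge y|_X/k+O(1)$, hence $\rho_{ka}$ on $X^{(k)}$ and $\rho_a$ on $X$ are bi-Lipschitz equivalent on $X^{(k)}$; together with the $k$-density of $X^{(k)}$ in $X$ (truncating any vertex of $X$ to its ancestor at a level divisible by $k$ moves it a distance $<k$) this gives a canonical identification of $\pt(X^{(k)},\E)$ with $\pt(X,\E)$ carrying one metric to a metric comparable to the other.

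\emph{Main obstacle.} The one step needing genuine care is the upper bound $d_{X^{(k)}}(x,y)\le d_X(x,y)/k+C$: one must round the meeting level of the canonical geodesic down to a multiple of $k$ and invoke the augmented-tree axiom \eqref{Def} to keep $\tilde u,\tilde v$ horizontally connected, and then use simplicity to bound the horizontal detour uniformly. This is precisely where the hypothesis that $(X,\E)$ is simple, rather than merely hyperbolic, enters.
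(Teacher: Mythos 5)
Your proposal is correct, and it simply supplies the details for a statement the paper does not prove at all (it is declared ``immediate'' after the definition of $X^{(k)}$): counting the horizontal components inside $T_\D$ level by level to get $A^k$, and the two-sided comparison $d_X(x,y)/k \le d_{X^{(k)}}(x,y) \le d_X(x,y)/k + C$ (via rounding the meeting level of the canonical geodesic down to a multiple of $k$ and using the uniform bound on component size) to identify the boundaries with the metrics $\rho_a$ and $\rho_{ka}$, which is exactly the natural route. Your remark that equivalence classes of $X$ may merge in $X^{(k)}$, so that $A^k$ is the incidence matrix with respect to the natural indexing by the classes of $X$, is a sensible refinement of a point the paper glosses over and does not affect the applications.
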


\bigskip

To conclude this section, we remark that it is rather flexible to choose the horizontal edges to form an augmented tree (see Definition \ref{Def'} and \cite{LaWa09, LuLa13, Wa12}).

\bigskip

\noindent {\bf Example 2.10.}
{\it Let $X=\cup_{n\ge0}\Sigma^n$, $\Sigma=\{1,2\}$. Let $\E_h=\{(11,12)\}$ and $\E=\E_v\cup \E_h$.   It is easy to see that the equivalence classes are  $[1],[2],[11,12],$  and the incidence matrix is
$$A=\begin{bmatrix}
          0&0&1\\
          0&2&0\\
          0&4&0
    \end{bmatrix}.$$
Hence $(X,\E)$ is simple and its hyperbolic boundary is Lipchitz equivalent to the one of $(X,\E_\upsilon)$,  but $(X,\E)$ can not be induced by an IFS.}

\bigskip

\noindent {\bf Example 2.11.} {\it
Let $X=\cup_{n\ge0}\Sigma^n, \Sigma=\{1,2\}$, and let $A\subset {\Bbb N}$ be a non-periodic infinite set. Let ${\mathcal E} = {\mathcal E}_v \cup \E_h$, where
$$
\E_h=\{(1^p,1^{p-1}2):\  p\in A\}.
$$
Then $(X,\E)$ is a hyperbolic augmented tree. There are infinitely many  equivalence classes
$$\{[2]\}\cup\{[1^p]:\ p \not \in  A\} \cup\{[1^p,1^{p-1}2]:\ p \in A \}.$$
Hence $(X,\E)$ is not simple.  But the identical map $\sigma: (X,\E_v)\to (X,\E)$ is  a near-isometry, which implies $\pt(X, \E_v)\simeq \pt(X, \E)$ by Proposition \ref{th2.6}.}

\bigskip

\section{\bf Quasi-rearrangeable matrices}

We see in Proposition \ref{th2.3} that the hyperbolic boundary of a simple augmented tree is totally disconnected. In order to show that its boundary is also Lipschitz equivalent to a homogenous Cantor set, a combinatoric device to rearrange vertices is needed.  This idea was introduced in \cite {RaRuXi06}, reformulated and investigated in \cite{DeHe12} and  \cite{LuLa13}: {\it Consider a set of vertices that are connected by edges, the number of connected components with size $u_i$ is $a_i$. For $N>0$, under what condition  can we rearrange (but not breaking) these components into groups such that each group has $N$ vertices?}  In this case, we can put this group of vertices as the $N$ descendants of one vertex. We will make use of  this property inductively to construct the near-isometry of the $N$-ary augmented tree with an $N$-ary tree.

\bigskip

\begin{Def}\label{th3.1}
Let $\ba=[a_1,\dots,a_m]$ and $\bu=[u_1,u_2,\dots, u_m]^t$ be in ${\mathbb N}^m$. For $N>0$, we say that $\ba$ is $(N,\bu)$-rearrangeable if there exists $p>0$ and a non-negative integral ${p\times m}$ matrix $C$ (rearranging matrix) such that
\begin{equation}\label{eq3.1}
\ba=\mathop{\underbrace{[1,\ldots,1]}\limits_{p}}C\ \mbox{ and }\ C\bu = \mathop{\underbrace{[N,\ldots, N]^t} \limits_{m}}.
\end{equation}
(In this case $\ba\bu=pN$.) We say that  $\ba$ is $(N,\bu)$-quasi-rearrangeable if the second identity is replaced by  $C\bu \leq [N,\dots,N]^t$.

A matrix $A$ is said to be $(N,\bu)$-rearrangeable (quasi-rearrangeable) if each row vector in $A$ is $(N,\bu)$-rearrangeable (quasi-rearrangeable). (Note that the $p$ and $C$ in each row may be different.)
\end{Def}

\medskip

To realize the above definition, let us assume that there are $m$ different kinds of objects, each kind has cardinality $a_i$ and each one of the same kind has weight $u_i$, hence the total weight is  $\sum_i a_iu_i = pN$.  The rearranging matrix $C$ is a way to divide these objects into $p$ groups (first identity in (\ref {eq3.1})) such that every entry of a row represents the number of each kind in the group, and the total weight of the objects in the group is $N$ (the second identity in (\ref {eq3.1}))
\begin{align*}
\begin{matrix}
C\ = \
\end{matrix}
&{\small \begin{tikzpicture}[baseline=-0.5ex]
\matrix[matrix of math nodes,left delimiter={[},right delimiter={]},inner sep=2.5pt, column 2/.style={black!50!black},
 ampersand replacement=\&] 
 {
   c_{11} \& \cdots \& c_{1j} \& \cdots \& c_{1m} \\
\vdots \&   \&  \vdots  \&  \& \vdots \\
c_{i1} \& \cdots \& c_{ij}  \& \cdots \& c_{im} \\
\vdots \&   \&   \&  \& \vdots \\
c_{p1} \& \cdots \& c_{pj} \& \cdots \& c_{pm}  \\
};
\draw (-0.05,0) ellipse (13pt and 48pt);
\draw (0,0) ellipse (55pt and 10pt);
\end{tikzpicture}}
\begin{matrix}
\quad  \mbox{$i$-th group.}
\end{matrix} \\
&
\begin{matrix}
 & &  & & & \!\!{\mbox{sum}\ a_j} &  \\
\end{matrix}
\end{align*}

The next lemma is a basic criterion given in \cite{DeHe12} to determine  a vector to be rearrangeable (see also \cite{LuLa13}).

\begin{Lem}\label{th3.2}
Let $\ba=[a_1,\dots,a_m]$, $\bu=[u_1,\dots,u_m]^t$ be in ${\Bbb N}^m$. Suppose $\ba\bu=pN$ and  all $a_i$, are sufficiently large compare to all $u_j$,  say,
\begin{equation} \label{eq3.2}
 a_i > p^2({\sum}_{j=1}^m u_j)({\prod}_{j=1}^m u_j),  \qquad 1\leq i \leq m.
\end{equation}
Then $\ba$ is $(N,\bu)$-rearrangeable if and only if  $\gcd(\bu)$ divides $N$.  In this case, the rearranging  matrix $C$ is of size $p\times m$.
\end{Lem}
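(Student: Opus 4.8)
\emph{Approach.} I would split into the two implications. The forward implication and the size claim come for free: if $\ba$ is $(N,\bu)$-rearrangeable via a non-negative integral $p'\times m$ matrix $C$ with $[1,\ldots,1]C=\ba$ and $C\bu=[N,\ldots,N]^t$, then right-multiplying the first identity by $\bu$ gives $\ba\bu=p'N$, so $p'=p$ (since $\ba\bu=pN$, $N>0$) and $C$ is automatically $p\times m$; and each row expresses $N=\sum_j c_{ij}u_j$ as a non-negative integral combination of the $u_j$, whence $\gcd(\bu)\mid N$. No largeness hypothesis is needed for this direction.

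\emph{Sufficiency.} Assume $d:=\gcd(\bu)\mid N$. Replacing $\bu$ by $\bu/d$ and $N$ by $N/d$ preserves $\ba\bu=pN$, only decreases $\sum_ju_j$ and $\prod_ju_j$ so that (\ref{eq3.2}) survives, and produces a system with $\gcd(\bu)=1$ for which a rearranging matrix is also one for the original; so assume $\gcd(\bu)=1$. Note first that the matrix with every entry of column $j$ equal to $a_j/p$ has column sums $\ba$ and all weighted row sums equal to $(\sum_j a_ju_j)/p=N$: the relevant transportation polytope is nonempty, and the task is to move from this fractional point to a lattice point of the same polytope. I would do so in two stages. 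Stage one is the ``even split'': write $a_j=pq_j+r_j$ with $0\le r_j<p$ and let column $j$ of $C^0$ have $r_j$ entries equal to $q_j+1$ and the rest equal to $q_j$; then $[1,\ldots,1]C^0=\ba$ and the weighted row sums $N_i:=\sum_j c^0_{ij}u_j$ all lie in an interval of length $\sum_ju_j$ containing their mean $N$, so $|N_i-N|\le\sum_ju_j$. Stage two equalizes the $N_i$ to $N$ using only \emph{transfer moves}: for a column $j$ and rows $i\ne i'$, replace $(c_{ij},c_{i'j})$ by $(c_{ij}+1,c_{i'j}-1)$, which preserves all column sums and moves $(N_1,\dots,N_p)$ by $u_j(\be_i-\be_{i'})$. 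Over all $j$ and all pairs these moves generate the entire sum-zero sublattice of ${\mathbb Z}^p$ — for a fixed pair they already give $\gcd(\bu)\,{\mathbb Z}(\be_i-\be_{i'})={\mathbb Z}(\be_i-\be_{i'})$ since $\gcd(\bu)=1$, and the $\be_i-\be_{i'}$ span that sublattice — and $(N-N_1,\dots,N-N_p)$ has zero coordinate sum, so finitely many transfers yield a $p\times m$ matrix $C$ with $[1,\ldots,1]C=\ba$ and $C\bu=[N,\dots,N]^t$.

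\emph{Main obstacle.} The only subtle point, and the sole place the hypothesis is used, is keeping all entries non-negative during stage two. Each entry of $C^0$ has size of order $a_j/p$, which by (\ref{eq3.2}) dominates $p$, $\sum_ju_j$ and $\prod_ju_j$, so it is enough to bound the number of transfer moves by a quantity of that order. This forces an economical equalization: roughly $O(p\sum_ju_j)$ unit steps $\be_i-\be_{i'}$ suffice, and each must be re-expressed through the allowed moves $u_j(\be_i-\be_{i'})$ by a Bézout identity $\sum_j\lambda_ju_j=1$ with $\sum_j|\lambda_j|u_j$ as small as possible (a Frobenius-type bound of order $\prod_ju_j$). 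Arranging this counting so that the total stays under the budget $a_j/p$ — which is exactly why (\ref{eq3.2}) carries the factor $p^2$ together with both $\sum_ju_j$ and $\prod_ju_j$ — is the quantitative crux, and the step I expect to require the most care.
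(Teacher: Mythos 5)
Your proposal is correct in substance, but note that the paper itself does not prove this lemma: it is quoted from \cite{DeHe12}, so there is no in-paper argument to compare with, and your balanced-split-plus-transfer-moves construction is a self-contained alternative to the citation. The necessity direction, the identification $p'=p$ (hence the $p\times m$ size claim), the reduction to $\gcd(\bu)=1$, the even split $C^0$ with $|N_i-N|\le\sum_j u_j$, and the fact that the transfer moves generate the sum-zero sublattice are all complete and correct. What your route buys is transparency and an explicit bound on how far the rearranging matrix is from the uniform split $a_j/p$; what it costs is exactly the quantitative non-negativity step you flag, which is indeed where all the work lies.

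That step does close under (\ref{eq3.2}), but not quite as budgeted: charging each entry with the \emph{total} number of transfer moves (about $p(\sum_j u_j)(\prod_j u_j)$) only matches $\lfloor a_j/p\rfloor$ up to constants. Two refinements make it comfortable. First, charge to rows: entry $(i,j)$ is touched only by the at most $|N_i-N|\le\sum_k u_k$ unit steps involving row $i$, so it decreases by at most $(\sum_k u_k)|\lambda_j|$ in total. Second, no Frobenius-type bound is needed: if some $u_k=1$ take the trivial relation $\lambda_k=1$; otherwise reduce any B\'ezout solution modulo $u_{j_0}:=\min_k u_k$ so that $0\le\lambda_j<u_{j_0}$ for $j\ne j_0$, whence $|\lambda_{j_0}|\le\sum_{k\ne j_0}u_k$ and so $|\lambda_j|\le\sum_k u_k$ for every $j$. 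Since $\sum_k u_k\le\prod_k u_k$ when all $u_k\ge2$, each entry moves by at most $\bigl(\sum_k u_k\bigr)^2\le p\bigl(\sum_k u_k\bigr)\bigl(\prod_k u_k\bigr)\le\lfloor a_j/p\rfloor$, so non-negativity is preserved and your construction yields the required $p\times m$ rearranging matrix.
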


Here $\gcd(\bu)$ is the greatest common divisor of $u_1,\dots, u_m$. Intuitively, if all the weights $u_j$ are small, and there are enough objects $a_i$'s to maneuver, then it is possible to round up the group to be with weight $N$. Lemma \ref{th3.2} yields the following useful sufficient condition for rearrangement, which applies to the incidence matrix  (see Proposition \ref {th4.2}, Lemma \ref{th4.3}).

\bigskip

\begin{Prop}\label{th3.3}
Let $A$ be an $m\times m$ primitive matrix (i.e., there exists $n>0$ such that $A^n>0$), and $\bu \in {\Bbb N}^m$.  Let $ u  = \gcd(\bu) $,

\ (i) if $A\bu= N\bu$, then there exists $k>0$ such that $A^k$ is $( uN^k,\bu)$-rearrangeable;

(ii)  if $A\bu\leq N\bu$, then there is an integer $k>0$ such that $A^k$ is $(uN^k,\bu)$-quasi-rearrangeable.

\vspace {0.1cm}
 In both cases, the corresponding rearranging matrix $C_i$ for each row of $A^k$ is of size $(u_i/u) \times m$.
\end{Prop}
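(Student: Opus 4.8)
The plan is to apply Lemma~\ref{th3.2} to each row of $A^k$ separately, taking the group size (the ``$N$'' of that lemma) to be $uN^k$. Write $A^k=[a^{(k)}_{ij}]$ and let $\ba^{(i)}=[a^{(k)}_{i1},\dots,a^{(k)}_{im}]$ be its $i$-th row. Since $u=\gcd(\bu)$ divides each $u_i$, the integer $p_i:=u_i/u$ is well defined, and it is the forced number of groups: in case~(i), iterating $A\bu=N\bu$ gives $A^k\bu=N^k\bu$, so $\ba^{(i)}\bu=N^ku_i=p_i(uN^k)$; in case~(ii), iterating $A\bu\le N\bu$ (using $A\ge 0$) gives $A^k\bu\le N^k\bu$, so $\ba^{(i)}\bu\le p_i(uN^k)$. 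In either case $\gcd(\bu)=u$ divides $uN^k$, so the divisibility requirement in Lemma~\ref{th3.2} is automatic.

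The next step is to make the entries of $A^k$ large. Since $A$ is primitive, $A^{n_0}>0$ for some $n_0$, and (for $m\ge 2$) its Perron eigenvalue $\lambda$ then satisfies $\lambda>1$; combining $\bv^tA^k=\lambda^k\bv^t$ for a positive left Perron eigenvector $\bv$ with the positivity of $A^{n_0}$, one gets $a^{(k)}_{ij}\to\infty$ as $k\to\infty$, uniformly in $i,j$. (The only non-growing case is $m=1$, $A=[1]$, which gives $N=1$ in case~(i) and makes the single row $[1]$ trivially $(uN^k,\bu)$-rearrangeable with $p=1$.) So for $k$ large we have $a^{(k)}_{ij}>p_i^{2}\big(\sum_\ell u_\ell\big)\big(\prod_\ell u_\ell\big)$ for all $i,j$, which is hypothesis~(\ref{eq3.2}) with $p=p_i$. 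This already proves~(i): Lemma~\ref{th3.2} produces, for each $i$, a rearranging matrix of size $p_i\times m=(u_i/u)\times m$.

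For~(ii) I would split on whether $A\bu=N\bu$. If it does, part~(i) applies and a rearranging matrix is in particular a quasi-rearranging one. Otherwise $\bw:=N\bu-A\bu\ge 0$ is nonzero; the telescoping identity $N^k\bu-A^k\bu=\sum_{j=0}^{k-1}N^{k-1-j}A^j\bw$, with $A^{n_0}\bw>0$ and $N\ge 2$, shows that the \emph{slack} $a^{(i)}_0:=u_iN^k-\ba^{(i)}\bu=(N^k\bu-A^k\bu)_i$ also tends to $\infty$. I would then adjoin a unit-weight coordinate: put $\widetilde{\ba}=[a^{(i)}_0,a^{(k)}_{i1},\dots,a^{(k)}_{im}]$ and $\widetilde{\bu}=[1,u_1,\dots,u_m]^t$, so $\widetilde{\ba}\,\widetilde{\bu}=p_i(uN^k)$ and $\gcd(\widetilde{\bu})=1$ divides $uN^k$; for $k$ large every entry of $\widetilde{\ba}$ is large enough, so Lemma~\ref{th3.2} yields a $p_i\times(m+1)$ rearranging matrix $\widetilde{C}$ with column sums $\widetilde{\ba}$ and $\widetilde{C}\,\widetilde{\bu}=[uN^k,\dots,uN^k]^t$. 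Deleting the first (slack) column of $\widetilde{C}$ gives a $p_i\times m$ matrix $C_i$ with column sums $\ba^{(i)}$ and $C_i\bu=\widetilde{C}\,\widetilde{\bu}-(\text{first column of }\widetilde{C})\le[uN^k,\dots,uN^k]^t$; thus $C_i$ is a quasi-rearranging matrix of size $(u_i/u)\times m$, as claimed.

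I expect the main difficulty to be this reduction in~(ii). Quasi-rearrangeability is not handed to us by Lemma~\ref{th3.2} directly, and the natural device of a unit-weight slot only works when there is slack to absorb into it---which is exactly what fails in the sub-case $A\bu=N\bu$. Hence the proof must branch on equality, and in the strict sub-case one must quantify, using primitivity and $N\ge 2$, that the slack grows without bound, so that the size condition~(\ref{eq3.2}) of Lemma~\ref{th3.2} applies to the enlarged vector $\widetilde{\ba}$.
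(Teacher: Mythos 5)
Your proof is correct and takes essentially the same route as the paper: part (i) is exactly Lemma \ref{th3.2} applied to the rows of a high power $A^k$, with primitivity supplying the size condition (\ref{eq3.2}), and part (ii) uses the paper's own device of adjoining a unit-weight slack coordinate, applying Lemma \ref{th3.2} to the augmented data, and deleting the slack column to get the inequality $C_i\bu\le[uN^k,\dots,uN^k]^t$. The only difference is packaging: the paper encodes the slack through an auxiliary $(m+1)\times(m+1)$ block matrix $A'$ with $A'\bu'=N^n\bu'$, whereas you track it row by row via the telescoping identity $N^k\bu-A^k\bu=\sum_{j=0}^{k-1}N^{k-1-j}A^j\bw$, and your explicit verification that the slack entries grow (and of the degenerate case $m=1$) is a detail the paper leaves implicit.
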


\begin{proof}
Let $\ba_i := \ba_i^{(k)}$ denote the $i$-th row of $A^k$. As $\bu$ is the $N$-eigenvector of $A$, it follows that
$$
\ba_i \bu = u_iN^k := pN'
$$
where $p = u_i/u$ and $N' = u N^k$.  From the primitive property of $A$, we can find an integer $k>0$ such that each entry of $A^k = [a_{ij}^{(k)}]$ is sufficiently large so that  (\ref{eq3.2}) is satisfied. Hence by Lemma \ref{th3.2}, \ $\ba_i$ is $(uN^k, \bu)$-rearrangeable, and (i) follows.

\vspace {0.1cm}

To prove (ii), we assume that $A\bu\ne N\bu$. Choose $n$ large enough such that $A^n\bu<N^n\bu$,  and let $\bw:=N^n\bu-A^n\bu > \bf 0$. Suppose $\bu=[u_1,\dots,u_m]^t$. Let $\bu'=[u_1,\dots,u_m,1]^t$ and
$$A'=\begin{bmatrix}
A^n & \bw \\
{\bf 0}& N^n
\end{bmatrix}.$$
It is direct to check that $A'\bu'=N^n\bu'$. If we denote the $i$-th row of $A^{nk}$ and $A'^k$ by $\ba_i$ and $\ba_i'= [\ba_i, a_{i,m+1}]$ respectively, then $\ba_i'\bu'=u_iN^{nk}$. It follows from the above (and Lemma \ref{th3.2})  that $\ba'_i$ is $(uN^{nk},\bu')$-rearrangeable, in the sense that for the $i$-th row vector $\ba_i'$ with $i\le m$, there exists a $(u_i/u)\times(m+1)$ non-negative matrix $C'_i$ satisfying
$$
\ba_i'={\bf 1}C_i', \quad \mbox{and} \quad C_i'\bu'=[uN^{nk},uN^{nk},\dots,uN^{nk}]^t.
$$
Let $C_i$ be obtained by deleting the last column of $C_i'$, then
$$
\ba_i={\bf 1}C_i,\quad  \mbox{and}\quad C_i\bu \leq [uN^{nk},uN^{nk},\dots,uN^{nk}]^t
$$
which yields (ii).
\end{proof}

In view of Lemma \ref {th3.2} and the proof of the above proposition, we also have

\medskip

\begin{Cor} \label {th3.4} Under the assumption in Proposition \ref{th3.3}, if further gcd$(\bu)$ divides $N$, then we can conclude that $A^k$ is $(N^k,\bu)$-rearrangeable in (i),  and $( N^k,\bu)$-quasi-rearrangeable in (ii).
\end{Cor}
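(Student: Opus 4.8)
The plan is to read Corollary \ref{th3.4} directly off the proof of Proposition \ref{th3.3} by inspecting where the hypothesis $u = \gcd(\bu)$ was used to split off a factor of $u$. Recall that in part (i) of the proposition, for the $i$-th row $\ba_i$ of $A^k$ we had $\ba_i\bu = u_i N^k = pN'$ with $p = u_i/u$ and $N' = uN^k$; Lemma \ref{th3.2} was applied with target weight $N'$ and required only that $\gcd(\bu) \mid N'$, which holds trivially since $N' = uN^k$ is a multiple of $u = \gcd(\bu)$. The point of the corollary is that if in addition $u \mid N$, then we may instead apply Lemma \ref{th3.2} with the target weight $N^k$ itself, because now $\gcd(\bu) = u$ divides $N^k$, and with $\ba_i \bu = u_i N^k = p' N^k$ where $p' = u_i$ is a positive integer.

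Concretely, first I would rerun the argument for (i): choose $k$ (using primitivity of $A$) large enough that every entry of $A^k$ exceeds the bound in \eqref{eq3.2} with $p$ replaced by $p' = \max_i u_i$ (which dominates $u_i$ for every row). Since $\ba_i\bu = u_i N^k$ and $u \mid N$ forces $\gcd(\bu) = u \mid N^k$, Lemma \ref{th3.2} applies with target $N^k$ and yields a rearranging matrix $C_i$ of size $p_i \times m$ where $p_i = \ba_i\bu/N^k = u_i$; this is exactly the assertion that $A^k$ is $(N^k,\bu)$-rearrangeable with the $i$-th rearranging matrix of size $u_i \times m$. Note this also matches the claimed size $(u_i/u)\times m$ only when $u = 1$; I should double-check the intended statement, but since the corollary says ``under the assumption in Proposition \ref{th3.3}'' and in that proposition the size was $(u_i/u)\times m$ with target $uN^k$, rescaling the target from $uN^k$ down to $N^k$ divides $p$ by $u$ only if $u \mid N$—wait, it actually replaces $p = u_i/u$ by $p' = u_i N^k / N^k = u_i$, so the block gets larger, not smaller. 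The safe route is to simply state that $A^k$ is $(N^k,\bu)$-rearrangeable and let the size bookkeeping follow from $p_i = u_i$; if the paper insists on the $(u_i/u)$ size it is presumably reusing the earlier $C_i$ and observing $C_i(N/u)\bu$ behaves correctly, but I would not over-commit to that in the sketch.

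For part (ii), I would mimic the proof of Proposition \ref{th3.3}(ii) verbatim: pass to $A^n$ with $A^n\bu < N^n\bu$, form the augmented matrix $A'$ and the vector $\bu' = [u_1,\dots,u_m,1]^t$ with $A'\bu' = N^n\bu'$. The key observation is that $\gcd(\bu') = 1$ (because of the appended $1$), so $\gcd(\bu') \mid N^{n}$ automatically — hence even the first part of the proposition gives an honest $(N^{nk},\bu')$-rearrangeable conclusion for $A'^k$ with the target $N^{nk}$ rather than $uN^{nk}$. Deleting the last column of each $C_i'$ then yields $\ba_i = \mathbf{1}C_i$ and $C_i\bu \le [N^{nk},\dots,N^{nk}]^t$, i.e.\ $A^{nk}$ is $(N^{nk},\bu)$-quasi-rearrangeable; replacing $k$ by $nk$ gives the stated form. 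The main (and really only) obstacle is purely bookkeeping: making sure the integer $k$ chosen for the size-\eqref{eq3.2} estimate is taken \emph{after} fixing the larger value of $p$ that corresponds to the smaller target weight, and tracking the dimensions of the rearranging matrices consistently with the phrasing of the corollary. No new idea beyond the already-proved Lemma \ref{th3.2} and Proposition \ref{th3.3} is needed.
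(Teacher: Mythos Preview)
Your approach is correct and matches the paper's, which offers no proof beyond the remark that the corollary follows ``in view of Lemma \ref{th3.2} and the proof of the above proposition.'' Two small clean-ups: first, your digression about the sizes of the rearranging matrices is unnecessary, since Corollary \ref{th3.4} asserts only $(N^k,\bu)$-(quasi-)rearrangeability and says nothing about the dimensions of the $C_i$; second, in part (ii) you cannot literally invoke Proposition \ref{th3.3}(i) for $A'$ because $A'$ is not primitive (its lower-left block is zero), but as you essentially note, Lemma \ref{th3.2} applies directly to the first $m$ rows of $A'^k$ once their entries are large, and $\gcd(\bu')=1$ makes the divisibility condition automatic.
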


\bigskip

\section{\bf Proofs of the main results}

Let $A=[a_{ij}]\in M_m({\mathbb Z})$ be a non-negative matrix, and $A^n=[a_{ij}^{(n)}]$. We say that $A$  is {\it primitive} if $A^n>0$ for some $n>0$, and is {\it irreducible} if for any entry $a_{ij}$, there exists $n>0$ such that $a_{ij}^{(n)}>0$. In matrix theory, it is well-known that for any non-negative matrix $A$, it can be brought into the form of the upper triangular block by a permutation matrix $P$,
$$
P^tAP=\left[\begin{array}{cccccc}
A_1 &  & {\bf *}  \\
 &  \ddots & \\
  {\mathbf 0} &  &  A_r
\end{array}\right]
$$
 where each $A_i$ is a square matrix that is either irreducible or zero, $i=1,\dots, r$. The following is a stronger result that for certain power $A^\ell$, the block matrices are primitive, if not zero.

 \bigskip

\begin{Lem}\label{th4.1}
Let $A$ be a non-negative matrix, then we have

\ (i) if $A^n$ is irreducible for any $n\geq 1$, then $A$ is primitive;

(ii) there is $\ell\geq 1$ such that the block matrices lying in the diagonal of the canonical form of $A^{\ell}$ are either primitive or $0$.
\end{Lem}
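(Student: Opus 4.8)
\medskip

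The plan is to reduce both parts to the classical Perron--Frobenius structure theory of irreducible non-negative matrices, specifically the cyclic (imprimitive) normal form. The one external fact I will invoke is the following: if $M$ is irreducible with index of imprimitivity (period) $h$, then after a permutation the only non-zero blocks of $M$ are $M_{12},M_{23},\dots,M_{h-1,h},M_{h1}$, so that $M^h$ is block-diagonal, $M^h=\textup{diag}(D_1,\dots,D_h)$, with each $D_j$ irreducible and in fact primitive. Granting this, both statements become bookkeeping on the directed graph of $A$.

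For (i), apply this with $M=A$. The case $n=1$ of the hypothesis says $A$ is irreducible, so it has a well-defined period $h$. If $h\ge 2$, then $A^h=\textup{diag}(D_1,\dots,D_h)$ has at least two diagonal blocks and no off-diagonal coupling, hence is reducible, contradicting the case $n=h$ of the hypothesis. Therefore $h=1$, i.e.\ $A$ is primitive.

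For (ii), fix a permutation $P$ bringing $A$ to a canonical upper block-triangular form $P^tAP$ with diagonal blocks $A_1,\dots,A_r$, each irreducible or zero. Since $P^tA^\ell P=(P^tAP)^\ell$ is again upper block-triangular with the same partition and diagonal blocks $A_i^\ell$, it suffices to choose $\ell$ so that each $A_i^\ell$ is (after an internal permutation) a direct sum of primitive blocks, the zero blocks staying zero; one further permutation then displays $A^\ell$ in canonical form with the asserted property. Let $h_i$ be the period of each irreducible $A_i$ and set $\ell=\textup{lcm}\{h_i:\ A_i\ne 0\}$. For such an $i$ we have $h_i\mid\ell$, so $A_i^\ell=(A_i^{h_i})^{\ell/h_i}=\textup{diag}(D_{i,1}^{\ell/h_i},\dots,D_{i,h_i}^{\ell/h_i})$ by the cyclic normal form, and each factor is primitive because a power of a primitive matrix is primitive; a zero $A_i$ of course gives $A_i^\ell=0$. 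It then remains to verify that this refined upper block-triangular form really is a canonical form of $A^\ell$: one checks that each block $D_{i,j}^{\ell/h_i}$ occupies a strongly connected component of the digraph of $A^\ell$ (and each surviving zero block a loopless singleton component), using that passing from the digraph of $A$ to that of $A^\ell$ only refines the component structure and that a walk of length $\ell$, a multiple of $h_i$, cannot move between distinct cyclic classes of $A_i$. Alternatively, this last step can be repackaged through part (i): choosing $\ell$ so that the component structure of the digraph of $A^\ell$ has stabilized under further powers, one shows that every non-zero diagonal block $D$ of a canonical form of $A^\ell$ has $D^n$ irreducible for all $n$, and concludes primitivity from (i).

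The routine-but-delicate point I expect to be the main obstacle is precisely the promotion of ``irreducible'' to ``primitive'' for the blocks $D_j$ in the cyclic normal form: this is where the full strength of the cyclic decomposition is used (the cyclic classes of a period-$h$ irreducible digraph become aperiodic once one looks only at walks of length $h$), and it also forces a little care with $1\times 1$ blocks and zero blocks so that the identification of the refined block-triangular matrix with a Frobenius form of $A^\ell$ goes through cleanly.
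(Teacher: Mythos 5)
Your proposal is correct, but it takes a genuinely different route from the paper's. For (i) the paper argues by hand, with no Perron--Frobenius structure theory: for each $k$ it takes the least $r_k$ with $a_{kk}^{(r_k)}>0$, sets $r=\mathrm{lcm}(r_1,\dots,r_m)$ so that $A^{r}$ has positive diagonal (whence positivity of an entry persists under further powers of $A^r$), and then uses irreducibility of $A^{r}$ (the hypothesis at $n=r$) to force all entries of a suitable power to be positive; you instead quote the cyclic normal form and note that period $h\ge 2$ would make $A^{h}$ block diagonal, hence reducible, so $h=1$ and $A$ is primitive. For (ii) the paper runs an induction on the order of $A$: if no power of $A$ is primitive, the contrapositive of (i) yields a reducible power $A^{n_0}$, whose canonical form is split into the top diagonal block $A_1$ and the complementary block $A_2$, both of smaller order, and the induction hypothesis supplies exponents $n_1,n_2$ so that $n=n_0n_1n_2$ works; you argue directly on the Frobenius form of $A$ itself, taking $\ell$ to be the lcm of the periods of the nonzero diagonal blocks and invoking the standard fact that for an irreducible $M$ of period $h$ the diagonal blocks of $M^{h}$ in the cyclic form are primitive (together with the fact that powers of primitive matrices are primitive). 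Your concluding verification is also sound: any permutation-similar block upper-triangular form whose diagonal blocks are irreducible or zero has those blocks exactly the strongly connected components of the digraph, so your refined form really is a canonical form of $A^{\ell}$. The trade-off: the paper's argument is entirely self-contained and elementary, at the price of an inductive, non-explicit exponent; yours gives an explicit $\ell$ and a cleaner structural picture, at the price of importing a classical theorem (independently proved in the literature, hence not circular, but essentially the content of the lemma for irreducible matrices).
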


\begin{proof}
(i)  For $1\leq k\leq m$, let $r_k>0$ be the smallest integer such that in $A^{r_k}$, the entry $a_{kk}^{(r_k)}>0$; also let $r$ be the least common multiple of $r_1,\cdots, r_m$. Then $a_{kk}^{(r)}>0$ for each $k$. This implies that if $a_{ij}^{(rn_0)}>0$ for some $n_0$, then $a_{ij}^{(rn)}>0$ for any $n\geq n_0$. For any $i\ne j$, let $r_{ij}$ be such that $a_{ij}^{(rr_{ij})}>0$, then $n=r\prod_{i,j=1, i\ne j}^mr_{ij}$ is the desired integer.

\vspace{0.1cm}

To prove (ii), we use induction on the order $m$ of $A$. It is trivial for $m=1$. Assume it is also true for $m-1$. Consider order $m$. If $A^n$ is primitive for some $n$, then we are done. Otherwise, by (i), there exists $n_0>0$ such that $A^{n_0}$ is  not irreducible. Let $A_1$ be the block matrix on the diagonal of the canonical form of $A^{n_0}$, if it is not zero, then it is irreducible. By induction hypothesis, there exists $n_1$ such that $A_1^{n_1}$ satisfies (ii). Consider the matrix $A_2$ obtained by deleting the rows and columns of $A^{n_0}$ corresponding to $A_1$. Then by using the induction hypothesis again, there exists $n_2$ such that $A_2^{n_2}$ satisfies (ii). By letting $n=n_0n_1n_2$, we conclude that  $A^n$ satisfies (ii) and completes the proof.
\end{proof}

\bigskip

\begin{Prop} \label{th4.2}
Let $(X, \E)$ be a simple $N$-ary augmented tree. Let $T$ be a horizontal component, and let $A$ be the  incidence matrix of the subgraph $T_{\D}$. If $A$ is primitive, then $\partial (T_\D, \E)\simeq \partial (X, \E_v)$.
\end{Prop}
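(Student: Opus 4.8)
The plan is to reduce, via the structure theory of Section 2, to constructing a near-isometry from $(T_\D,\E)$ onto a disjoint union of copies of a regular tree. The argument proving Proposition \ref{th2.7} applies to the augmented tree $T_\D$ and gives $A\bu=N\bu$, where $\bu=[u_1,\dots,u_m]^t$ and $u_i=\#T$ for $T\in[T_i]$. Since $A$ is primitive, Proposition \ref{th3.3}(i) yields $k>0$ for which $A^k$ is $(uN^k,\bu)$-rearrangeable, $u=\gcd(\bu)$; moreover, if $u\mid N$ then Corollary \ref{th3.4} lets us take $A^k$ to be $(N^k,\bu)$-rearrangeable, with the rearranging matrix attached to its $i$-th row of size $u_i\times m$. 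I would then pass to the accelerated tree $T_\D^{(k)}$, which by Proposition \ref{th2.9} is $N^k$-ary with incidence matrix $A^k$ and satisfies $\partial(T_\D,\E)=\partial(T_\D^{(k)},\E)$ (with the metric $\rho_{ka}$). It thus suffices to exhibit a near-isometry between $(T_\D^{(k)},\E)$ and the union of $u_{i_0}$ copies of the $N^k$-ary tree $(Z,\E_v)$, where $i_0$ is the type of the top component $T$: Proposition \ref{th2.6} then turns this into a Lipschitz equivalence of boundaries, Proposition \ref{th2.8} identifies $\partial$ of that union with $\partial(Z,\E_v)$, and $\partial(Z,\E_v)=\partial(X,\E_v)$ is the $N$-Cantor set (a finite power does not change it, by Proposition \ref{th2.9} with $\E_h=\emptyset$).

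I would build the near-isometry $\sigma$ recursively, treating each horizontal component together with the whole subtree it carries as a single unit. Consider $T'\in[T_i]$: it has $u_i$ vertices, and one super-level below, its offspring horizontal components form a multiset whose type-counts are the $i$-th row $\ba_i^{(k)}$ of $A^k$ and whose total size is $u_iN^k$. When $u\mid N$, the size-$u_i\times m$ rearranging matrix of Corollary \ref{th3.4} partitions these offspring components (each retaining its subtree) into $u_i$ groups of size $N^k$, using every one exactly once. On the target side $T'$ corresponds to a block of $u_i$ copies of $(Z,\E_v)$ with $u_i$ roots, and one lets the $s$-th group become the $N^k$ children of the $s$-th root, then recurses on each offspring component — its $u_j$ vertices becoming $u_j$ among those $N^k$ children and its subtree becoming the matching $u_j$ copies of $Z$ below. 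Because each horizontal component has at most $\max_j u_j$ vertices and, by hyperbolicity (Proposition \ref{th2.3}), the horizontal part of every canonical geodesic in $T_\D^{(k)}$ is uniformly bounded, any two vertices lying in a common group are at bounded graph distance on both sides; hence matching them by an arbitrary bijection distorts distances by at most a constant, and $\big|d(\sigma(x),\sigma(y))-d(x,y)\big|$ remains bounded by a single $c$ independent of the level. Proposition \ref{th2.6} then finishes the case $u\mid N$.

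The step I expect to be the main obstacle is the general case, in which $u=\gcd(\bu)$ does not divide $N$. Then Corollary \ref{th3.4} is unavailable, and Proposition \ref{th3.3}(i) only repartitions the offspring components of a type-$i$ unit into $u_i/u$ bundles of size $uN^k$ rather than into $u_i$ groups of size $N^k$; a single such bundle cannot in general be split component-by-component into $u$ pieces of size $N^k$, so it will not feed $u$ separate target roots. The remedy is to run the construction $u$ copies at a time: carry it out for the disjoint union of $u$ copies of $T_\D^{(k)}$, where the size-$uN^k$ bundles now match exactly the children of a block of $u$ roots, and then descend from the union back to a single copy using that a non-empty clopen subset of the $N$-Cantor set is Lipschitz equivalent to it (together with the freedom in Definition \ref{def2.1} to discard finite exceptional sets). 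Verifying that the distortions accumulated over the infinitely many super-levels stay uniformly bounded — so that $\sigma$ is genuinely a near-isometry and not merely a rough isometry — is the other point needing care.
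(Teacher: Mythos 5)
Your reduction steps (Proposition \ref{th2.7} applied to $T_\D$ to get $A\bu=N\bu$, Proposition \ref{th3.3}/Corollary \ref{th3.4} for rearrangeability of $A^k$, Proposition \ref{th2.9} to pass to the accelerated tree, and Propositions \ref{th2.6}, \ref{th2.8} to return to the Cantor set) are the same as the paper's, and your construction in the case $\gcd(\bu)\mid N$ --- whole groups of weight $N^k$ assigned to the $N^k$ children of a single target vertex, so that every horizontal component lands among siblings --- is essentially the paper's argument (which defers the distance estimate to Theorem 3.7 of \cite{LuLa13}).

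The genuine gap is exactly where you anticipate it, and your remedy does not close it. When $u=\gcd(\bu)$ does not divide $N$, a bundle of total weight $uN^k$ must be spread over the children of $u$ distinct target vertices, so some source component is split between different target parents; at the next level the same splitting recurs inside that component's image, and so on. In a plain $N^k$-ary target forest this nested straddling lets the graph distance between the images of a horizontally adjacent pair grow linearly with the level (their target geodesic must climb to where the two parental lines diverge, which can be arbitrarily high), so the map you describe is not a near-isometry --- this is a genuine failure, not a matter of bookkeeping. Passing to the disjoint union of $u$ copies of $T_\D^{(k)}$ changes nothing, because the obstruction lives on the target side: the $u$ target vertices receiving one bundle would have to remain uniformly close at all later levels, which is false in a tree. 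The paper's proof fixes precisely this point by modifying the target: it takes $Y$ to be the union of $\ell=\#T$ copies of $(X,\E_v)$ and adds horizontal edges $\E'$ joining each run of $u$ consecutive vertices on every level; bundles of weight $uN$ are sent onto the descendants of these $u$-blocks, and since each source component has size divisible by $u$ and occupies whole blocks inside the children of a single block, the distortion measured in $(Y,\E')$ stays bounded. One then needs the separate identification $\partial(Y,\E')\simeq\partial(Y,\E_v)$ (the paper asserts this for the identity map; it can also be obtained from Proposition \ref{th2.3}, as $(Y,\E')$ is a simple augmented tree whose quotient tree is $N$-ary). This auxiliary augmented target, rather than source duplication, is the idea missing from your proposal; your final clopen-subset descent is legitimate in itself but becomes unnecessary once the target is modified.
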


\begin{proof}
Let $\{ [T_1], \cdots , [T_m]\}$ be the equivalence classes in $T_{\D}$, let $u_i = \#T_i$ be the number of vertices in $T_i$,  and let $u = \hbox {gcd}(\bu)$. The proof follows from the same idea as  Theorem 3.7 in \cite{LuLa13} for $(X, \E)$ where $\hbox {gcd}(\bu) =1$ . Here we only sketch the main idea.

\vspace {0.1cm}

 By Proposition \ref{th3.3}, there exists $k$ such that $A^k$ is $(uN^k, \bu)$-rearrangeable.  In view of Proposition \ref{th2.9}, we can assume without loss of generality that $k=1$. Hence for any $T_i$, we have a  $C_i$ to rearrange its descendants into $p_i=u_i/u$ groups consisting of the $T_j$'s, we denote them by ${\mathcal V}_k, 1\leq k \leq p_i$, the number of vertices in ${\mathcal V}_k$ is $uN$.

\vspace {0.1cm}

Let $\ell = \#T$,  let $Y$ be the union of $\ell$ copies of $(X, \E_v)$. Let $\E'$ be an augmented structure on $Y$ by adding  horizontal edges that joining $u$ consecutive vertices in each level  (see the left figure in Figure \ref{map-prop4.2}). (Note that number of vertices in the $n$-th level is  $\ell N^{n-1}$ and $u$ divides $\ell$.)   Then
$$
\partial (Y, \E') \simeq \partial (Y, \E_v) \simeq \pt(X, \E_v)
$$
as the first $\simeq$ follows from a direct check that the identity map is a near-isometry, and the second $\simeq$ follows from Proposition \ref{th2.8}.

\begin{figure}[h]
    \centering
    \includegraphics[width=5.5in]{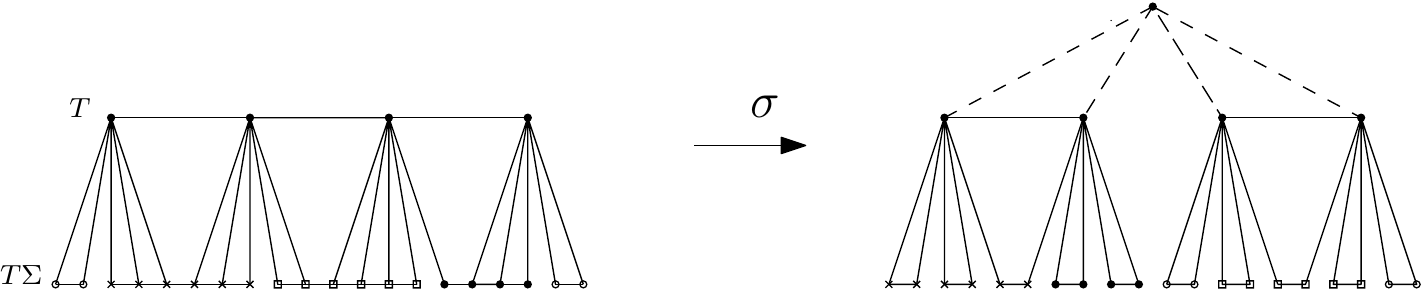}

    \medskip

    \caption{An illustration of $\sigma: T_\D \to Y$ with $u=2, \ell=4$, the $\bullet,\times,\circ,\Box$  denote four kinds  of components.}\label{map-prop4.2}
\end{figure}

\vspace {0.1cm}

With this setup, we can define a map $\sigma : (T_\D, \E) \to (Y, \E')$ as follows. On the first level, let $\sigma$ be any bijection from $T$ to $Y_1$. Suppose we have defined the $T_i$ of $T_\D$ in the $n$-th level, i.e., for $T_i = \{\bi_1, \cdots , \bi_t\}$, and $\sigma(T_i) = (\bj_1 = \sigma (\bi_1), \cdots , \bj_t = \sigma (\bi_t))$, we define $\sigma$ on $T_i \Sigma$ by assigning the vertices of ${\mathcal V}_k$ consecutively to the descendants of  $\sigma(T_i)$ (see Figure \ref{map-prop4.2}). It follows from the rearrangement property that each $\sigma ({\mathcal V}_k)$ are descendants of $u$ consecutive vertices in $\sigma (T_i) (\subset Y_n)$ (see Theorem 3.7 in \cite{LuLa13} for detail).  By the same proof as Theorem 3.7 in \cite{LuLa13}, that $\sigma$ is a near-isometry, and hence $\partial(T_\D, \E) \simeq \partial (Y, \E') \simeq \partial (X, \E_v)$.
\end{proof}

\bigskip

\begin{Rem}\label{remark 4.3}
It follows from the above that  there is a near-isometry $\sigma : (T_\D, \E)\to (Y, \E_v)$ where $(Y, \E_v)$ is the union of $\ell$ copies of $(X, \E_v)$ and $\ell = \#T$. (Actually we can take any finite copies of $(X, \E_v)$ according to Proposition \ref{th2.8}.)
\end{Rem}

\medskip

\begin{Lem}\label{th4.3}
Let $(X, {\mathcal E})$ be a simple $N$-ary augmented tree with equivalence classes $\{[T_1],\dots, [T_m]\}$,  and the incidence matrix is of the form
$$
A= \left[\begin{array}{cc}
A_1 & A_3 \\
0 & A_2
\end{array}\right]
$$
where $A_1, A_2$ are non-zero matrices with orders $r$ and $m-r$ respectively. Let $u_i=\#T_i$, ${\mathbf u}_1=[u_1,\dots,u_r]^t$ and $u= \hbox {gcd}(\bu)$. Suppose

\ (i) $A_1$ is $(uN, {\bu}_1)$-quasi-rearrangeable;

(ii) For $i=r+1,\dots, m$, there exist near-isometries $\sigma_i: ({(T_i)}_D, {\mathcal E}) \to (Y_i, {\mathcal E}_v)$ as in \\
\indent \hspace {0.5cm} Remark \ref{remark 4.3}.

\vspace{0.1cm}

\noindent  Then there exists a near-isometry $\sigma : (X, {\mathcal E}) \to (X, \E_v)$, hence
$$
\partial(X, \E) \simeq \partial(X, \E_v).
$$
\end{Lem}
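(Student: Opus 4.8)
The plan is to construct, level by level, a near-isometry from $(X,\E)$ onto a union of finitely many copies of $(X,\E_v)$ equipped with the auxiliary augmented structure $\E'$ used in the proof of Proposition~\ref{th4.2} (horizontal edges joining every $u$ consecutive vertices on each level). Granting this, the conclusion follows by composing near-isometries: the identity map is a near-isometry $(Y,\E')\to(Y,\E_v)$ as noted in the proof of Proposition~\ref{th4.2}, Proposition~\ref{th2.8} supplies a near-isometry from a union of copies of $(X,\E_v)$ onto $(X,\E_v)$ (indeed a union of $N$ copies of an $N$-ary tree is again an $N$-ary tree isomorphic to $(X,\E_v)$), and then Proposition~\ref{th2.6} gives $\partial(X,\E)\simeq\partial(X,\E_v)$.

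First I would normalize. By Proposition~\ref{th2.9} I may replace $(X,\E)$ by $(X^{(k)},\E)$, whose incidence matrix is again block upper triangular with diagonal blocks $A_1^k$ and $A_2^k$; so I may assume $k=1$ in hypothesis~(i), i.e.\ for each $i\le r$ there is a non-negative integral $(u_i/u)\times r$ matrix $C_i$ with $(A_1)_i=[1,\dots,1]C_i$ and $C_i\bu_1\le[uN,\dots,uN]^t$. The numerical fact that drives the argument is the eigenvector identity $A\bu=N\bu$ of Proposition~\ref{th2.7}: written in block form with $\bu_2=[u_{r+1},\dots,u_m]^t$ it reads $A_1\bu_1+A_3\bu_2=N\bu_1$, so for each $i\le r$ the total deficiency of the $p_i=u_i/u$ groups produced by $C_i$,
$$\sum_{t=1}^{p_i}\bigl(uN-(C_i\bu_1)_t\bigr)=u_iN-(A_1)_i\bu_1=(A_3)_i\bu_2,$$
is exactly the number of vertices contained in those offspring horizontal components of a $T\in[T_i]$ that belong to the classes $[T_{r+1}],\dots,[T_m]$.

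Next I would build $\sigma$ by descending the tree. At any horizontal component lying in a class $[T_j]$ with $j>r$ I stop the recursion and splice in the near-isometry $\sigma_j$ of hypothesis~(ii), which carries its descendant subgraph onto a union of $u_j$ copies of $(X,\E_v)$. At a component $T\in[T_i]$ with $i\le r$ I use $C_i$ to group the offspring components of $T$ that lie in classes $\le r$ into $p_i$ groups and then fill the deficiency of each group with the offspring components that lie in classes $>r$, each of these (with its descendants) already identified with copies of $(X,\E_v)$ by~(ii); by the displayed identity the counts match exactly, so each completed group consists of precisely $uN$ vertices, which I assign to the $uN$ descendants of $u$ consecutive vertices of $\sigma(T)$, just as in Proposition~\ref{th4.2}. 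The portion of a group coming from classes $\le r$ is processed again at the next level, the portion coming from classes $>r$ needs no further processing; the target so obtained is a union of copies of $(X,\E_v)$ carrying the structure $\E'$.

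Finally, that the resulting bijection (defined off an exceptional set) is a near-isometry would be checked exactly as in Proposition~\ref{th4.2}: the horizontal parts of canonical geodesics in $(X,\E)$ are uniformly bounded because $(X,\E)$ is simple (Proposition~\ref{th2.3}), those in the target are bounded by the width of $\E'$ together with the uniformly bounded exceptional data of the finitely many isomorphism types occurring among the $\sigma_j$, and the level discrepancy introduced by the rearrangement is controlled as in Proposition~\ref{th4.2}. The hard part, I expect, is the exceptional sets: hypothesis~(ii) is invoked at infinitely many vertices and each invocation contributes its own finite exceptional sets, so the total exceptional set is merely countable; one must verify that it meets the condition in the remark following Definition~\ref{def2.1}, namely that its limit points in the boundary are limit points of the complement, so that Proposition~\ref{th2.6} still applies. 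Reconciling the deficiency-filling with the union-of-copies structure of the target while keeping this exceptional set under control is the real obstacle; the combinatorial input (quasi-rearrangeability together with the eigenvector identity) and the metric estimate are then routine.
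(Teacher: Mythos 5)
This is essentially the paper's own argument: the paper likewise builds $\sigma$ level by level, grouping the class-$\le r$ offspring components of each $T\in[T_i]$, $i\le r$, via the quasi-rearranging matrix of $A_1$ and letting the class-$>r$ components (handled by splicing in the given $\sigma_j$'s) fill the leftover descendant slots, with the near-isometry estimate carried out as in Proposition~\ref{th4.2}; your explicit appeal to $A\bu=N\bu$ to match the counts, and your routing through a union of copies with the auxiliary $\E'$ structure instead of the paper's ``assume $u=1$ for convenience'' reduction with a direct map into $(X,\E_v)$, are only presentational differences. Your closing worry about accumulating exceptional sets is not a real obstacle: each spliced $\sigma_j$ can be taken to be a bijection of $T'_\D$ onto the full descendant set of its assigned slots (the level-wise vertex counts agree exactly), and since only finitely many classes occur the distortion constants are uniform, so the global map is a bijection off a finite set and Proposition~\ref{th2.6} applies directly.
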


\begin{proof}  For convenience, we assume that $A_1$ is $(N, {\bu}_1)$-quasi-rearrangeable, the general case follows from the same argument as in last proposition.   We will use (i) and (ii) to construct a near-isometry $\sigma: (X, {\mathcal E}) \to (X, {\mathcal E}_v)$. We write $X_1=(X, {\mathcal E})$ and $X_2=(X, {\mathcal E}_v)$.    Let
$$
\sigma(o)=o \quad \hbox {and} \quad \sigma(i)=i, ~i\in \Sigma.
$$
Suppose  $\sigma$ has been defined on $\Sigma^n$ such that

(1) for  component $T \in[T_i],\ i\leq r$,  $\sigma(T)$ has the same parent, i.e.,
\begin{equation*}
\sigma (x)^{-} = \sigma (y)^{-} \qquad  \forall \ x, y \in T \subset \Sigma^n.
\end{equation*}

(2) for component $T \in[T_i],\ i\geq r+1$, $\sigma(x)=\sigma_i(x)$ for  $x\in T_{\mathcal D}$.

\vspace {0.1cm}
\noindent To define the map $\sigma$ on $\Sigma^{n+1}$, we note that if $T\subset \Sigma^n$ in (2), then $\sigma$ is well-defined by $\sigma_i$. If  $T\subset \Sigma^n$ in (1), without loss of generality, we let $T\in [T_1]$. Then  $T$  gives rise to horizontal components in $\Sigma^{n+1}$, we group them into  ${\mathcal Z}_{1,j}, j =1, \cdots m$ according to  the components belonging to $[T_j]$.

By the quasi-rearrangeable property of $A_1$ (assumption (i)), for the row vector ${\mathbf a}_1=[a_{11},\dots, a_{1r}]$, there exists a nonnegative integral matrix
$C=[c_{sj}]_{u_1\times r}$  such that
$$
{\mathbf a}_1 = {\mathbf 1} C \quad \hbox {and} \quad C{\mathbf  u}_1 \leq [N,\dots,N]^t.
$$
By using this, we can decompose ${\mathbf a}_1$ into $u_1$ groups as follows. Note that $a_{1j}$ denotes the number of horizontal components  that belong to $[T_j]$. For each $1\leq s \leq u_1$, we choose  $c_{sj}, 1\leq j\leq r$, of those components  that are of size $u_j$, and denote by $\Lambda_s$. Then ${\bigcup}_{j=1}^r {\mathcal Z}_{1,j}$ can be rearranged into $u_1$ groups
\begin{equation}\label{eq4.3}
{\cup}_{j=1}^r {\mathcal Z}_{1,j}= \Lambda_1\ \cup \ \cdots \cup \Lambda_{u_1}.
\end{equation}
and the total vertices in each group is  $\leq N$.

\vspace{0.1cm}
For the  component  $T = \{{\mathbf i}_1,  \dots ,  {\mathbf i}_{u_1}\} \subset \Sigma^n$ in $(X, \mathcal E)$, we have defined  $ \sigma (T) = \{{\mathbf j}_1 = \sigma ({\mathbf i}_1), \ \dots ,\ {\mathbf j}_{u_1} = \sigma ({\mathbf i}_{u_1})\}$ in $(X, \mathcal E_v)$ by induction. In view of (\ref {eq4.3}), we define $\sigma$ on ${\bigcup}_{j=1}^r {\mathcal Z}_{1,j}$ by assigning   vertices in $ \Lambda_s$  (cardinality $\leq N$) to the descendants of ${\mathbf j}_s$ (cardinality $N$) in a one-to-one manner;  for the remaining  $T'\in {\bigcup}_{j=r+1}^m {\mathcal Z}_{1,j}$ (maybe empty), say $T'\in [T_j]$ and $j\geq r+1$,  we define for $x \in T'$, $\sigma (x)$ to be any point in $\sigma(T)\Sigma \setminus {\bigcup}_{j=1}^r \sigma({\mathcal Z}_{1,j})$ to fill up the $\sigma(T)\Sigma$ (see Figure \ref{map-lem}).  We also use $\sigma_i$ to induce a near-isometry $\sigma : T_\D \to (\sigma(T))_\D$.  We apply the same construction of $\sigma$ on the offsprings of every component in $\Sigma^{n+1}$. Inductively, $\sigma$ can be defined from $X_1$ to $X_2$.

\begin{figure}[h]
    \centering
    \includegraphics[width=5.0in]{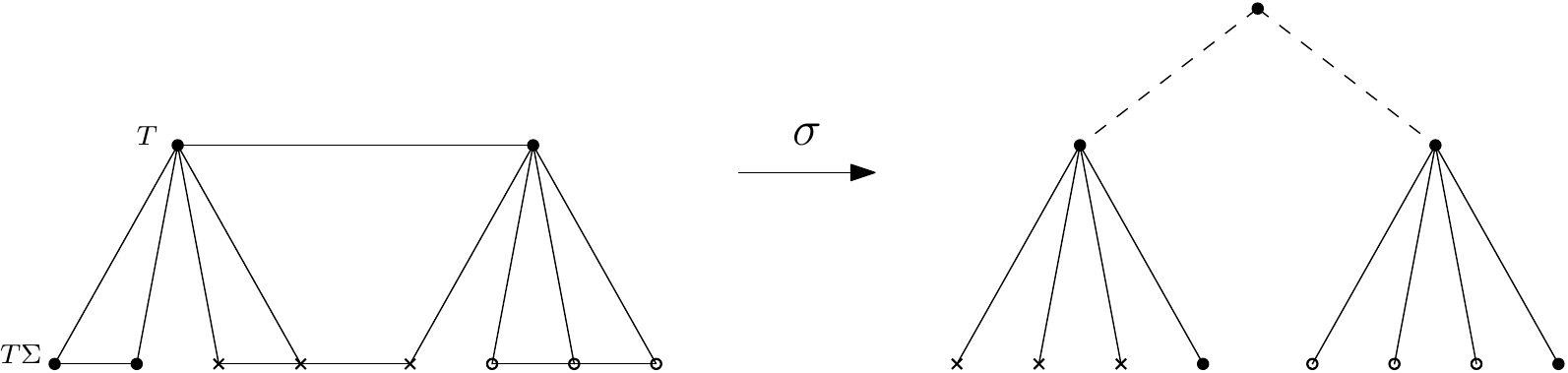}

    \medskip

    \caption{An illustration of a rearrangement by $\sigma$, the $\bullet,\times,\circ$ denote the three kinds of  components. The first component in $\bullet$ is $T'$ which belongs to the second type, and the other two components belong to the first type }\label{map-lem}
\end{figure}

Finally we show that  $\sigma: X_1 \to X_2$ is indeed a near-isometry. Let $\pi({\mathbf {x,y}})$  be the canonical geodesic in $X_1$,
it can be written as
$$
\pi({\mathbf {x,y}})=[{\mathbf x}, {\mathbf v}_1,\dots, {\mathbf v}_n, {\mathbf t}_1,\dots, {\mathbf t}_k, {\mathbf w}_n,\dots, {\mathbf w}_1, {\mathbf y}]
$$
where $[{\mathbf t}_1,\dots, {\mathbf t}_k]$ is the horizontal part and $[{\mathbf x}, {\mathbf v}_1,\dots, {\mathbf v}_n, {\mathbf t}_1],~ [{\mathbf t}_k, {\mathbf w}_n,\dots, {\mathbf w}_1, {\mathbf y}]$ are vertical parts. Clearly, $\{{\mathbf t}_1,\dots, {\mathbf t}_k\}$ must be included in one horizontal component of $X_1$, we denote it by $T$ and let $T\in [T_i]$ for some $1\le i\le m$.

If $i\geq r+1$,  it is clear that $$\big |d(\sigma({\mathbf x}),\sigma({\mathbf y}))-d({\mathbf {x,y}})\big |=\big |d(\sigma_i({\mathbf x}),\sigma_i({\mathbf y}))-d({\mathbf {x,y}})\big |\leq c,$$
where $c$ is the uniform bound of the near-isometries $\{\sigma_i\}_{i={r+1}}^m$.

If $i\le r$,  consider the position of ${\mathbf x}$ first: if ${\mathbf x}\in T'\in [T_j], j\le r$, then $|\sigma({\mathbf x})|=|{\mathbf x}|$ by the construction of $\sigma$, hence $\big| |\sigma({\mathbf x})|-|{\mathbf x}| \big|=0$;  otherwise ${\mathbf x}\in T'_{\D}$ for some $T'\in [T_j]$ and $j\ge r+1$, in this case, we have $\big| |\sigma({\mathbf x})|-|{\mathbf x}| \big|\le c$ as above. Similarly for ${\mathbf y}$. Notice that $d({\mathbf x}, {\mathbf y})=|{\mathbf x}|+|{\mathbf y}|-2\ell+h$, where $\ell$ and $h$ are the level and the length of the horizontal part of the canonical geodesic $\pi(x,y)$ (see \cite{LaWa09} or \cite{LuLa13}). Therefore
\begin{eqnarray*}
\big |d(\sigma({\mathbf x}),\sigma({\mathbf y})) - d({\mathbf {x,y}})\big | &\leq & \big||\sigma({\mathbf x})|-|{\mathbf x}|\big|+\big||\sigma({\mathbf y})|-|{\mathbf y}|\big|+2|l'-l|+|h'-h| \\
&\leq & 2c+2|l'-l| +|h'-h|\\
&\leq & 2c + 2|l'-l|+c_0
\end{eqnarray*}
where $c_0=\max_{1\le i\le r} u_i$. Moreover, by condition (1), it follows that  $|l'-l|\le \frac{1}{2}(c_0 +1)$ (see also \cite{LuLa13}).  Consequently
$$
\big |d(\sigma({\mathbf x}),\sigma({\mathbf y})) - d({\mathbf x},{\mathbf y}) \big |\leq 2(c+c_0)+1.
$$
This completes the proof that $\sigma$ is a near-isometry.
\end{proof}

\bigskip

\begin{theorem} \label{th4.4}
Suppose $(X, {\mathcal E}_v)$ is an $N$-ary tree, and the augmented tree $(X, {\mathcal E})$ is simple. Then $\partial(X, {\mathcal E})\simeq \partial(X, {\mathcal E}_v).$
\end{theorem}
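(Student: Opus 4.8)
The plan is to establish, by induction on the number $q$ of diagonal blocks in the canonical (upper-triangular block) form of the incidence matrix, the following statement, which is slightly stronger than Theorem~\ref{th4.4} and is the non-primitive analogue of Proposition~\ref{th4.2}: for every simple $N$-ary augmented tree $(X,\E)$ and every horizontal component $T$, there is a near-isometry from $(T_\D,\E)$ onto a union of $\#T$ copies of $(X,\E_v)$. Theorem~\ref{th4.4} then follows by taking $T=\{o\}$ (so $T_\D=X$ and $\#T=1$): the resulting near-isometry $(X,\E)\to(X,\E_v)$, both being hyperbolic augmented trees (Proposition~\ref{th2.3}), gives $\partial(X,\E)\simeq\partial(X,\E_v)$ by Proposition~\ref{th2.6}.

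First I would normalize. By Lemma~\ref{th4.1}(ii) there is $\ell\ge 1$ such that the diagonal blocks of the canonical form of $A^\ell$ are each primitive or zero, and by Proposition~\ref{th2.9} (with the matched hyperbolic metrics) one has $\partial(X^{(\ell)},\E)=\partial(X,\E)$ and $\partial(X^{(\ell)},\E_v)=\partial(X,\E_v)$, the latter being an $N^\ell$-Cantor set, bi-Lipschitz to the $N$-Cantor set; so I may replace $(X,\E)$ by $(X^{(\ell)},\E)$ and assume that every diagonal block of the canonical form of $A$ is primitive or zero. Since each such block has period $1$, further powers of $A$ (taken via Proposition~\ref{th2.9}) preserve this property and do not increase $q$, so I remain free to pass to powers inside the induction. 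Reorder the classes so that $A$ is in canonical form. If $q=1$, then $A$ is primitive — it cannot be the zero matrix, since every vertex has $N$ children forming components in some class, so every row of $A$ is nonzero — and for any component $T$ the incidence matrix of $T_\D$ is the principal submatrix of $A$ on the classes reachable from $T$, which by irreducibility is $A$ itself; Proposition~\ref{th4.2} together with Remark~\ref{remark 4.3} then gives the required near-isometry. This is the base case.

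For $q\ge 2$, write $A=\left[\begin{smallmatrix}A_1 & A_3\\ 0 & A_2\end{smallmatrix}\right]$ with $A_1$ the first diagonal block (order $r$, primitive or zero) and $A_2$ (order $m-r$) in canonical form with $q-1$ diagonal blocks; split $\bu=[\bu_1;\bu_2]$ accordingly. By Proposition~\ref{th2.7}, $A\bu=N\bu$, hence $A_1\bu_1=N\bu_1-A_3\bu_2\le N\bu_1$. Therefore, when $A_1$ is primitive, Proposition~\ref{th3.3}(ii) — applied after passing to a suitable power $A_1^k$ and carrying $X$ along by Proposition~\ref{th2.9} — shows that $A_1$ is quasi-rearrangeable relative to $\bu_1$; when $A_1$ is the $1\times 1$ zero block this holds trivially. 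Either way, hypothesis (i) of Lemma~\ref{th4.3} is met. For hypothesis (ii): for each $i\ge r+1$ the incidence matrix of $(T_i)_\D$ is a principal submatrix of $A_2$, so it has at most $q-1$ primitive-or-zero diagonal blocks, and the induction hypothesis supplies a near-isometry $(T_i)_\D\to(Y_i,\E_v)$ of the needed form. Running the construction in the proof of Lemma~\ref{th4.3} on $T_\D$ (a finite union of augmented subtrees glued along $T$, using (i) and (ii)) then yields the near-isometry from $(T_\D,\E)$ onto a union of $\#T$ copies of $(X,\E_v)$, completing the induction.

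The point where I expect real care to be needed, rather than bookkeeping, is hypothesis (i) of Lemma~\ref{th4.3}: because the source block $A_1$ leaks mass into the lower classes via $A_3$, one has only the inequality $A_1\bu_1\le N\bu_1$ and not an eigenvalue identity, so the rearrangeable matrices of \cite{LuLa13} do not apply and the new quasi-rearrangeable notion is what carries the argument (this is exactly why Proposition~\ref{th3.3}(ii), with ``$\le$'', is used in place of part (i)). Aligning the gcd's — Proposition~\ref{th3.3}(ii) delivers $(\gcd(\bu_1)N^k,\bu_1)$-quasi-rearrangeability, the extra factor being absorbed into a ``$u$-grouped'' target tree just as in the proof of Proposition~\ref{th4.2} — and keeping track of the passages to the powers $X^{(\ell)}$ are the routine but fiddly remaining details.
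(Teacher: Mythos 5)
Your proposal is correct and follows essentially the same route as the paper: normalize via Lemma \ref{th4.1} and Proposition \ref{th2.9} so the diagonal blocks are primitive or zero, handle the single primitive block by Proposition \ref{th4.2} and Remark \ref{remark 4.3}, and handle the general case through Lemma \ref{th4.3}, with hypothesis (i) supplied by Proposition \ref{th2.7} ($A\bu=N\bu$, hence $A_1\bu_1\le N\bu_1$) and Proposition \ref{th3.3}(ii). Your component-wise inductive statement (near-isometry of $(T_\D,\E)$ onto $\#T$ copies of $(X,\E_v)$) is just a careful spelling-out of the paper's ``the general case follows by applying the above argument inductively,'' and your trivial treatment of a zero first block replaces the paper's removal of finitely many vertices without changing the substance.
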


\begin{proof}
Let  $\{[T_1],\dots, [T_m]\}$ be the equivalence classes of horizontal components,  $u_i=\#T_i$, and $A$ the associated incidence matrix. By Lemma \ref{th4.1}, there exists $\ell\geq 1$ and a permutation matrix $P$ such that
$$
A^\ell=\left[\begin{array}{cccccc}
A_1 &  & {\bf *}  \\
 &  \ddots & \\
  {\mathbf 0} &  &  A_k
\end{array}\right]
$$
where $A_i$ are either $0$ or primitive. From the definition of incidence matrix, we see that $A_k \neq 0$, hence is primitive. Without loss of generality, we let $\ell=1$ (by Proposition \ref{th2.9}).

If $k=1$, then $A=A_1$ is primitive. For any horizontal component $T \subset \Sigma$, $T_\D$ has incidence matrix $A$ also. Hence by Proposition \ref{th4.2} that $\partial (T_\D, \E)  \simeq \partial (X, \E_v)$. As $\Sigma$ is the disjoint union of such $T$, it follows from Proposition \ref {th2.8} and Remark \ref{remark 4.3} that  $\partial (X, \E) = \partial (\cup (T_\D, \E)) \simeq \partial (X, \E_v)$.

\vspace{0.1cm}

If $k=2$, let $A_1$, $A_2$ correspond to  $\{[T_1],\dots, [T_r]\}$,   and  $\{[T_{r+1}],\dots, [T_r]\}$ respectively. If $A_1={\mathbf 0}$, we can take $A_2$ as the incidence matrix of $(X, {\mathcal E})$ by removing finitely many vertices that belong to $[T_i], 1\leq i\leq r$. By Proposition \ref{th4.2}, the result follows. If $A_1 \neq {\bf 0}$, then  Proposition \ref {th4.2} and Remark \ref{remark 4.3} imply that  assumption (ii) in Lemma \ref{th4.3} is satisfied; the other assumptions also follow readily, and the theorem follows.

The general case that $k\geq 2$ follows by applying the above argument inductively.
\end{proof}

\bigskip

\section{\bf Applications to self-similar sets}

In this section, we will make use of the previous results to study the Lipschitz equivalence of self-similar sets and their unions. As before we assume the self-similar set $K$ is generated by an IFS  $\{S_i\}_{j=1}^N$ on $\R^d$ where
\begin{equation}\label{eq5.1}
S_i(x)=rR_ix+d_i,\quad  1\le i\le N
\end{equation}  with $0<r<1$, $R_i$ orthogonal matrices and $d_i\in \R^d$.  The representing symbolic space of $K$ is the tree $X= \cup_{n=0}^\infty \Sigma^n$ where $\Sigma=\{1,\dots, N\}$,  and $\Sigma^n$ is the set of indices $\bi = i_1i_2\cdots i_n$, representing $S_\bi=S_{i_1}\circ\dots\circ S_{i_n}$. Let $(X, {\mathcal E}_v)$ be as before, we define the horizontal edge set ${\mathcal E}_h$ to be
\begin{equation}\label{eq2.4'}
\E_h = \{ (\bi, \bj) :\  |\bi| = |\bj|,  \ \bi \not = \bj  \ \ \hbox {and} \ \  S_\bi(K) \cap S_\bj(K) \not = \emptyset\},
\end{equation}
and let $\E = \E_v\cup \E_h$.  Then $\E_h$ satisfies (\ref{Def}); we call $(X, \E)$ an {\it ($N$-)self-similar augmented tree}. A sufficient condition for $(X, \E)$ to be  hyperbolic is that the IFS satisfies the {\it open set condition (OSC)} \cite{LaWa09} (see \cite{Wa12} for the more general situations). In the special case that the IFS is strongly separated, i.e., $S_i(K) \cap S_j(K) = \emptyset$ for $i\ne j$, then $\E = \E_v$, and $\pt X$ (also $K$) is a homogeneous Cantor-type set (it is also called a dust-like self-similar set in  \cite{FaMa92, LuLa13}).

%

\medskip

Let $J$ be a nonempty  bounded closed invariant set, i.e., $S_i(J)\subset J$ for all $i$.  For indices ${\bi}\in X$, denote  $J_{\bi}=S_{\bi}(J)$. The self-similar set $K$ (or the IFS) is said to satisfy {\it condition (H)} if there exists a bounded closed invariant set $J$  such that for any integer $n\ge 1$ and indices $\bi, \bj \in \Sigma^n$, then
$$J_{\bi} \cap J_{\bj}=\emptyset \ \Rightarrow  \  \dist(J_{\bi}, J_{\bj})\ge cr^n \quad \text{for some} \ c>0.$$
In many situations, we can take $J=K$, or take $J=\overline{U}$ for the open set $U$ in the OSC (see \cite{LuLa13, Lu13}). It was proved that if the augmented tree $(X,\E)$ is simple and $K$ satisfies condition (H), then the natural map $\Phi: \partial X \to K$ satisfies the second inequality of the following the H\"older equivalent property:
$$
C^{-1}|\Phi(\xi)-\Phi(\eta)|\le \rho_a(\xi,\eta)^\alpha \le C|\Phi(\xi)-\Phi(\eta)|, \quad \forall \  \xi\ne \eta \in \partial X,
$$
where $\alpha=-\log r/a$ and $C>0$ is a constant (\cite {LuLa13}, Proposition 3.5). (The first inequality always holds.)

\medskip

Condition (H) is satisfied by the standard self-similar sets, for example, the generating IFS satisfies the strongly separation condition, or the OSC and all the parameters of the similitudes are rational numbers (for more discussions on this condition, we refer to \cite{LaWa09, Wa12}).  Note that all the IFSs considered here satisfy condition (H).

\medskip

\begin{theorem}\label{th5.1}
Let $K,K'$ be two $N$-self-similar sets that are generated by two IFSs with the same contraction ratio $r$ (as in (\ref{eq5.1})) and satisfy condition (H). If their associated augmented trees are simple, then $K\simeq K'$.

In particular, $K$ and $K'$ are  Lipschitz equivalent to the $N$-Cantor set with contraction ratio $r$.
\end{theorem}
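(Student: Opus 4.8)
The plan is to deduce Theorem \ref{th5.1} directly from the machinery already assembled, treating it as essentially a translation statement: the self-similar setting gives us an augmented tree, the tree is simple by hypothesis, and the combinatorial main theorem (Theorem \ref{th4.4}) then controls the hyperbolic boundary, while condition (H) converts the hyperbolic-boundary statement into a statement about $K$ itself. So the first step is to set up the symbolic space $X=\cup_n\Sigma^n$ for $K$, form $\E_h$ as in \eqref{eq2.4'}, and record that $(X,\E)$ is an $N$-ary augmented tree which is simple by assumption. By Theorem \ref{th4.4}, $\partial(X,\E)\simeq\partial(X,\E_v)$, and the latter is the homogeneous $N$-Cantor set $\CC_{N,r}$ (with the metric $\rho_a$ for a suitable $a$; choosing $a=-\log r$ makes $\alpha=1$, which is the clean normalization). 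The same applies verbatim to $K'$ with its own tree $(X,\E')$, giving $\partial(X,\E')\simeq\partial(X,\E_v)$. Since Lipschitz equivalence is transitive and symmetric, $\partial(X,\E)\simeq\partial(X,\E')$.

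Next I would invoke condition (H) for both IFSs. As quoted from \cite{LuLa13} (Proposition 3.5), simplicity plus condition (H) gives that the canonical map $\Phi:\partial X\to K$ is a Hölder equivalence with exponent $\alpha=-\log r/a$: $C^{-1}|\Phi(\xi)-\Phi(\eta)|\le\rho_a(\xi,\eta)^\alpha\le C|\Phi(\xi)-\Phi(\eta)|$. Taking $a=-\log r$ so that $\alpha=1$, this says $\Phi$ is a bi-Lipschitz bijection from $(\partial X,\rho_a)$ onto $(K,|\cdot|)$; likewise $\Phi':(\partial X',\rho_a)\to K'$ is bi-Lipschitz. Composing the bi-Lipschitz maps $K\xrightarrow{\Phi^{-1}}\partial(X,\E)\xrightarrow{\sim}\partial(X,\E_v)\xrightarrow{\sim}\partial(X,\E')\xrightarrow{\Phi'}K'$ yields a bi-Lipschitz bijection $K\to K'$, i.e. $K\simeq K'$. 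Specializing $K'$ to the $N$-Cantor set with ratio $r$ (whose IFS is strongly separated, so $\E'=\E_v$ and the hyperbolic boundary is literally $\partial(X,\E_v)$, and condition (H) holds trivially), we get that $K$ itself is Lipschitz equivalent to the $N$-Cantor set with ratio $r$, which also re-proves the "in particular" clause and shows the two-IFS statement follows from the Cantor-set statement by transitivity.

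One point needing a little care: to compare $\partial(X,\E_v)$ arising from $K$ and from $K'$ we should note these are the same metric space up to bi-Lipschitz (indeed isometric after rescaling the metric), since both are the $N$-ary tree boundary with the metric $\rho_a$, $a=-\log r$; both IFSs have the same $N$ and the same $r$, so there is no discrepancy in branching or in contraction rate, and the identity map on symbols is the required isometry. It is also worth remarking that the open set condition is not assumed for either IFS — as in \cite{LuLa13} it emerges as a consequence (the Lipschitz equivalence to a Cantor set forces totally disconnectedness and positive Hausdorff-measure separation), though we do not need to spell that out here.

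The main obstacle, such as it is, is not in the self-similar reduction but sits upstream in Theorem \ref{th4.4}: replacing the primitivity hypothesis on the incidence matrix $A$ by mere simplicity, which forces one to pass to the block upper-triangular canonical form (Lemma \ref{th4.1}), handle the primitive diagonal blocks by the rearrangeable construction of Proposition \ref{th4.2}, and glue the blocks together using the quasi-rearrangeable property (Proposition \ref{th3.3}(ii)) via Lemma \ref{th4.3}. Once Theorem \ref{th4.4} is in hand, Theorem \ref{th5.1} is a short corollary of the form described above; the only genuinely new verification at this stage is that condition (H) (which we are assuming, and which holds for all IFSs under consideration) indeed upgrades the hyperbolic-boundary equivalence to a Lipschitz equivalence of the Euclidean fractals, and that is exactly the content we are allowed to cite from \cite{LuLa13}.
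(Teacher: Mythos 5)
Your proposal follows the paper's own proof essentially verbatim: apply Theorem \ref{th4.4} to each augmented tree, pass through $\partial(X,\E_v)$ by transitivity, and use the H\"older-equivalent maps $\Phi_1,\Phi_2$ supplied by simplicity plus condition (H) (Proposition 3.5 of \cite{LuLa13}) to transfer the boundary equivalence to a bi-Lipschitz map $K\to K'$, with the ``in particular'' clause obtained by treating $(X,\E_v)$ as the augmented tree of a strongly separated IFS. The only (harmless) deviation is your normalization $a=-\log r$ to force $\alpha=1$: the paper instead composes the two H\"older maps, which share the same exponent $\alpha=-\log r/a$, so the conclusion holds for any admissible $a$ and one need not check that $a=-\log r$ satisfies the constraint $e^{\delta a}-1<\sqrt{2}-1$ imposed on the hyperbolic metric.
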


\begin{proof} We remark that the theorem is a modification of Theorem 3.10 in \cite{LuLa13} by omitting the rearrangeable condition on $A$, as it is not necessary in view of Theorem \ref{th4.4}.  The proof is the same,  we just sketch the main idea for completeness. Let $(X,\E), \ (Y,\E)$ be the two augmented trees induced by $K, K'$ respectively. Then Theorem \ref{th4.4} implies that
$$
\pt(X,\E)\simeq \pt(X,\E_v)=\pt(Y,\E_v)\simeq \pt(Y,\E);
$$
we let $\varphi$ denote the bi-Lipschitz mapping from $\pt(X,\E)$ onto $\pt(Y,\E)$. Moreover, there exist H\"older equivalent maps  $\Phi_1:\,\pt(X,\E)\to K$ and $\Phi_2:\,\pt(Y,\E)\to K'$ (depend on the parameter $a$ in the hyperbolic metric $\rho_a$). Then $\tau=\Phi_2\circ\varphi\circ\Phi_1^{-1}$ is the desired bi-Lipschitz map from $K$ onto $K'$.

The last statement also follows if we treat $(X,\E_v)$ as an augmented tree induced by a strongly separated IFS.
\end{proof}

\medskip

\begin{Cor} The IFSs in Theorem \ref{th5.1} satisfy the open set condition.
\end{Cor}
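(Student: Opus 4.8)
The plan is to read off the open set condition (OSC) from the bi‑Lipschitz equivalence already produced in Theorem \ref{th5.1}. By that theorem, $K$ (and, by the same argument, $K'$) is bi‑Lipschitz equivalent to the $N$‑Cantor set $\CC$ with contraction ratio $r$. Since $\CC$ is the attractor of a strongly separated IFS, it trivially satisfies the OSC, so by Hutchinson's theorem $\dim_H \CC = s$, where $s=\log N/\log(1/r)$ is the common similarity dimension of every IFS under consideration (the one defining $\CC$, the one defining $K$, and the one defining $K'$ all have $N$ maps of ratio $r$), and moreover $0<\mathcal H^s(\CC)<\infty$.

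Next I would transport this to $K$ by bi‑Lipschitz invariance. If $\tau:\CC\to K$ is a bijection with $L^{-1}|x-y|\le|\tau(x)-\tau(y)|\le L|x-y|$, then $L^{-s}\mathcal H^s(E)\le \mathcal H^s(\tau(E))\le L^s\mathcal H^s(E)$ for every Borel set $E$; hence $\dim_H K=\dim_H\CC=s$ and $0<\mathcal H^s(K)<\infty$. In particular $\mathcal H^s(K)>0$, and likewise $\mathcal H^s(K')>0$.

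Finally I would invoke Schief's characterization of the OSC for self‑similar sets: for an IFS of similitudes on $\R^d$ with attractor $K$ and similarity dimension $s$, the OSC holds if and only if $\mathcal H^s(K)>0$. Applying this to the IFS $\{S_i\}_{i=1}^N$ of Theorem \ref{th5.1}, whose similarity dimension is exactly the $s$ above, together with the positivity of $\mathcal H^s(K)$ just obtained, we conclude that $\{S_i\}$ satisfies the OSC; the identical argument applies to the IFS defining $K'$. (This is the analogue of Corollary 3.11 of \cite{LuLa13}.)

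The only genuine input is the appeal to Schief's theorem (equivalently, the Bandt--Graf criterion refined by Schief); everything else is the routine bi‑Lipschitz transport of Hausdorff dimension and of the critical Hausdorff measure. The one point to state carefully is the matching of exponents: one must observe that the $N$‑Cantor set with ratio $r$ has Hausdorff dimension precisely the similarity dimension of the $N$‑map, ratio‑$r$ IFS, which is immediate since strong separation is a special case of the OSC. No new estimates are needed, so I do not expect any real obstacle here.
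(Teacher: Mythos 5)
Your proposal is correct and follows essentially the same route as the paper: Lipschitz equivalence with the $N$-Cantor set of ratio $r$ gives $0<\mathcal H^{s}(K)<\infty$ for $s=\log N/\log(1/r)$, and then Schief's criterion yields the open set condition. The extra care you take in matching the exponent $s$ with the similarity dimension and in the bi-Lipschitz transport of Hausdorff measure is exactly what the paper's brief proof leaves implicit.
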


\begin{proof} It follows from the same proof as Corollary 3.11 in \cite {LuLa13}. The Lipschitz equivalence implies that $0< {\mathcal H}^s(K) <\infty$ where $s$ is the dimension of $K$, which implies the open set condition is satisfied by Schief's criterion \cite{Sc94}.
\end{proof}

\medskip
\begin{Cor}\label{th5.2}
Let $\{K_i\}_{i=1}^\ell$ be a sequence of self-similar sets satisfying  the assumptions of Theorem \ref{th5.1}.  Let $X=\{o\}\cup (\cup_{n\ge0}\{1,2,\dots,\ell\}\Sigma^n)$ be a tree, and be equipped with a horizontal edge set
$$
\E_h=\{(i\bi,j\bj):\,|\bi|=|\bj|,(K_i)_\bi\cap (K_j)_\bj\ne\emptyset, \ 1\le i,j\le k,\ \bi,\bj\in \cup_{n\ge0}\Sigma^n\}.
$$
Suppose the union $\cup_{i=1}^\ell K_i$ satisfies condition (H) in the sense that: if $(K_i)_\bi\cap (K_j)_\bj=\emptyset$ for any $\bi, \bj\in \Sigma^n$ and $n\ge 1$ then $\dist((K_i)_\bi, (K_j)_\bj)\ge cr^n$ for some $c>0$.  Then $\cup_{i=1}^\ell K_i\simeq K_j$ for all $j$, provided the augmented tree $(X,\E_v\cup \E_h)$ is simple.
\end{Cor}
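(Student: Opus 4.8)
The plan is to follow the proof of Theorem~\ref{th5.1} essentially verbatim; the only genuinely new feature is that the underlying graph $X$ is not $N$-ary at its root $o$, which now has the $\ell$ offspring $\{1,\dots,\ell\}$ rather than $N$. Since $(X,\E_v\cup\E_h)$ is assumed simple it is hyperbolic (Proposition~\ref{th2.3}), so $\partial(X,\E)$ is well defined, where $\E=\E_v\cup\E_h$. I would first establish the intermediate statement that $\partial(X,\E)$ is bi-Lipschitz equivalent, in one fixed hyperbolic metric $\rho_a$, to $\partial(X,\E_v)$, and hence to the boundary of the vertical $N$-ary tree of $K_j$, i.e.\ to the $N$-Cantor set with ratio $r$. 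The passage from this to the self-similar sets is then identical to Theorem~\ref{th5.1}: the hypothesis that $\cup_iK_i$ satisfies condition (H) is precisely what makes the natural coding map $\Phi\colon\partial(X,\E)\to\cup_{i=1}^{\ell}K_i$ a H\"older equivalence with exponent $\alpha=-\log r/a$ (the upper estimate being automatic, as in \cite{LuLa13}, Proposition~3.5), and likewise the coding map from the boundary of the vertical tree of $K_j$ onto $K_j$ is a H\"older equivalence with the \emph{same} exponent $\alpha$. Composing these two coding maps (one inverted) with the bi-Lipschitz map of the intermediate statement --- all over the single metric $\rho_a$ and with the common exponent $\alpha$ --- produces a bi-Lipschitz bijection $\cup_iK_i\to K_j$; since $K_j$ is Lipschitz equivalent to the $N$-Cantor set with ratio $r$ by Theorem~\ref{th5.1}, all of $K_1,\dots,K_\ell$ and $\cup_iK_i$ are mutually Lipschitz equivalent.

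To prove the intermediate statement I would use that, by Definition~\ref{Def'}, every horizontal edge joins descendants of one and the same horizontal component of the first level $X_1=\{1,\dots,\ell\}$; consequently $X\setminus\{o\}$ is the disjoint union $\bigsqcup_{p}(T^{(p)})_\D$ over the horizontal components $T^{(1)},\dots,T^{(s)}$ of $X_1$, the subgraphs $(T^{(p)})_\D$ being pairwise disconnected. Each $(T^{(p)})_\D$ is a simple augmented tree in which every vertex has exactly $N$ offspring and whose top level is the single horizontal component $T^{(p)}$ --- that is, it is precisely an object of the type $T_\D$ treated in Proposition~\ref{th4.2} and in the proof of Theorem~\ref{th4.4}. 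Running that argument on each $(T^{(p)})_\D$ --- bringing the incidence matrix of $(T^{(p)})_\D$ into the canonical block form of Lemma~\ref{th4.1}, applying the (quasi-)rearrangement Proposition~\ref{th3.3} and Proposition~\ref{th4.2} to the primitive diagonal blocks, and assembling with Lemma~\ref{th4.3} --- yields, via Remark~\ref{remark 4.3}, a near-isometry of $((T^{(p)})_\D,\E)$ onto a union of $\#T^{(p)}$ copies of the $N$-ary tree carrying only vertical edges. Taking the disjoint union of these $s$ near-isometries and re-attaching $o$ exhibits $(X,\E)$ as near-isometric to a union of $\sum_{p}\#T^{(p)}=\ell$ copies of the $N$-ary tree with vertical edges; by Proposition~\ref{th2.6} together with Proposition~\ref{th2.8} its boundary is Lipschitz equivalent to $\partial(X,\E_v)$ and to the $N$-Cantor set with ratio $r$.

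The step that will demand the most care is verifying that the Section~4 machinery --- Proposition~\ref{th4.2}, Lemma~\ref{th4.3} and the near-isometry construction of Theorem~\ref{th4.4} --- genuinely applies to each $(T^{(p)})_\D$, whose ``roots'' form a horizontal component of size $\#T^{(p)}$ rather than a single vertex. I expect this to be essentially bookkeeping: ``$N$-ary'' is used in those proofs only through the fact that every non-root vertex has $N$ offspring, simplicity of $(X,\E)$ supplies finitely many equivalence classes of descendant subgraphs so that the incidence matrices and the eigenvector identity $A\bu=N\bu$ of Proposition~\ref{th2.7} are available, and the (quasi-)rearrangement input of Section~3 is unchanged; if one prefers to quote the Section~4 results verbatim, one may first absorb the finitely many $(T^{(p)})_\D$ into a single genuine simple $N^{k}$-ary augmented tree via Proposition~\ref{th2.9}. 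The other point to watch is the H\"older equivalence of $\Phi$: condition (H) for the union is invoked exactly to furnish the lower bound $|\Phi(\xi)-\Phi(\eta)|\ge c\,\rho_a(\xi,\eta)^{\alpha}$, which ensures $\Phi$ is a \emph{bijection} of $\partial(X,\E)$ onto $\cup_iK_i$ that does not collapse distances, precisely as in Proposition~3.5 of \cite{LuLa13}.
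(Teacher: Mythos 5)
Your overall route is essentially the paper's own: the published proof is just the terse observation that $\partial X\simeq \partial X_j$, that $\partial X_j$ (the boundary of the augmented tree of $K_j$) is H\"older equivalent to $K_j$, and that condition (H) for the union makes $\partial X$ H\"older equivalent to $\cup_{i}K_i$ exactly as in Proposition 3.5 of \cite{LuLa13}. Your intermediate statement --- decomposing $X\setminus\{o\}$ into the descendant graphs of the first-level horizontal components (legitimate by Definition \ref{Def'}), running Proposition \ref{th4.2}, Lemma \ref{th4.3} and the argument of Theorem \ref{th4.4} on each piece, and reassembling via Remark \ref{remark 4.3}, Proposition \ref{th2.6} and Proposition \ref{th2.8} --- is sound and in fact supplies detail the paper leaves implicit, since Theorem \ref{th4.4} is stated only for $N$-ary trees and the corollary's tree has $\ell$ offspring at the root.

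There is, however, one false link in your chain: the coding map from the boundary of the \emph{vertical} tree of $K_j$ onto $K_j$ is in general not a H\"older equivalence --- it is not even injective whenever two distinct cells $S_{\bi}(K_j)$ and $S_{\bj}(K_j)$ intersect, which is exactly the situation the augmented tree is built to handle (e.g.\ Example \ref{exa5.3}); it is a H\"older equivalence only in the dust-like case. As literally described, your composite map therefore need not be a bijection onto $K_j$. The repair costs nothing beyond results you already cite: either pass from the vertical-tree boundary of $K_j$ to the boundary of its \emph{augmented} tree by Theorem \ref{th4.4} (applicable since the tree of $K_j$ is simple by the hypotheses of Theorem \ref{th5.1}) and use the H\"older equivalence of that boundary with $K_j$ furnished by condition (H) --- this is precisely the chain in the proof of Theorem \ref{th5.1} --- or stop your composition at the $N$-Cantor set with ratio $r$ (whose coding map \emph{is} a H\"older equivalence, the generating IFS being strongly separated), conclude $\cup_i K_i\simeq$ that Cantor set, and then quote Theorem \ref{th5.1} for $K_j\simeq$ the same Cantor set, which is in effect what your closing sentence does. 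With that correction the argument is complete and coincides with the paper's.
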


\begin{proof}
Since $\pt X \simeq \pt X_j$ and  $\pt X_j$ is H\"older equivalent to $K_j$,  it suffices to show that $\pt X$ and $\cup_{i=1}^\ell K_i$ are also H\"older equivalent. We omit the proof as it  is the same as that of Proposition 3.5 in \cite{LuLa13} with some minor modifications.
\end{proof}

\bigskip

In the following, we will illustrate our results by some simple examples.  The first one was raised in \cite {LuLa13} (see also \cite{XiXi13}) that its incidence matrix is not primitive.

\bigskip

\begin{Exa}\label{exa5.3}
Let $K$ be a self-similar set generated by $\{S_1(x)=\frac{x}{5},S_2(x)=-\frac15(x-4), S_3(x)=\frac15(x+4)\}$ (see Figure \ref{145}).  Then $K$ is Lipschitz equivalent to a $3$-Cantor set.
\end{Exa}

It is easy to see that the equivalence classes are $[1], [2,3]$. Hence  the augmented tree for $K$ is simple, and the incidence matrix is
        $$A=\begin{bmatrix}
          1&1\\
          0&3
        \end{bmatrix}.$$
 By Theorem \ref{th5.1}, $K$ is Lipschitz equivalent to the $3$-Cantor set $K'$ generated by  $\{S'_i(x)=\frac15(x+2(i-1))\}_{i=1}^3$.

\begin{figure}[h]
    \centering
    \subfigure[]{
    \includegraphics[width=4cm]{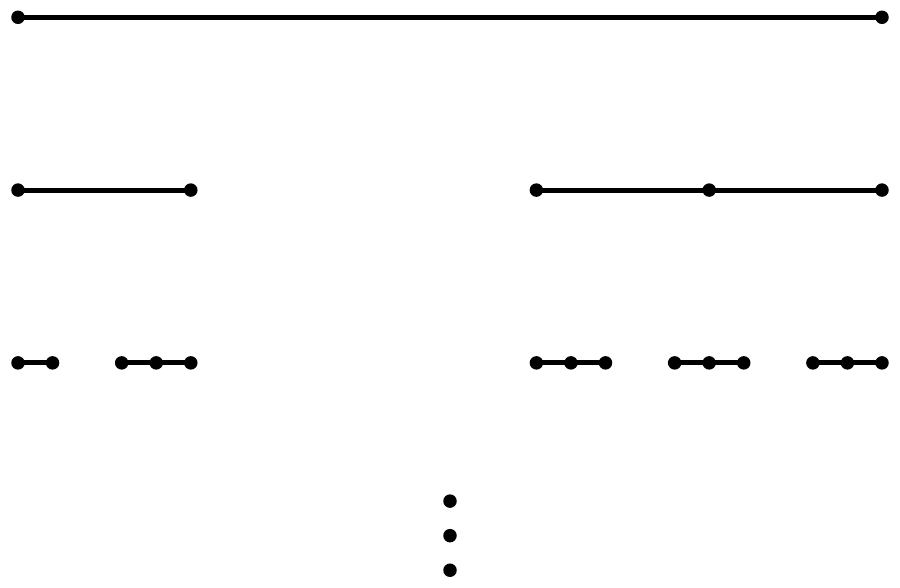}
    }\qquad\qquad
    \subfigure[]{
     \includegraphics[width=4cm]{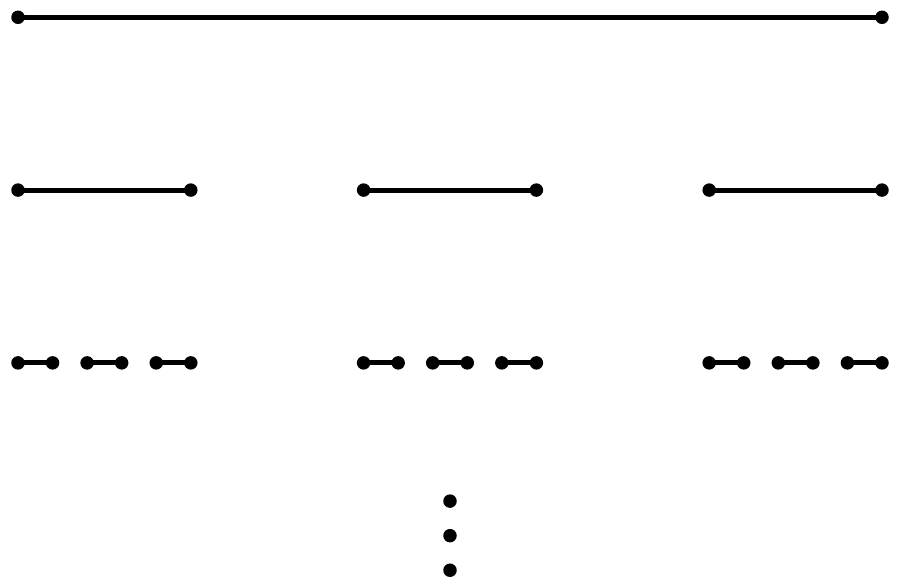}
     }
\caption{Self-similar sets $K$ and $K'$ of Example \ref{exa5.3}.}\label{145}
\end{figure}

\bigskip

The next two examples concerns the union of Cantor-type sets, which are not self-similar.

\begin{Exa}\label{exa5.4}
Let $0<r<1/2$,
$$
d_1= {\tiny \begin{bmatrix}  0\\ 0  \end{bmatrix}} , \quad
d_2={\tiny \begin {bmatrix} r^{-1}-1 \\ r^{-1}-1 \end{bmatrix}},  \quad d_3={\tiny \begin {bmatrix} c_1 \\ r^{-1}-1-c_2 \end{bmatrix}},  \quad d_3'={\tiny \begin {bmatrix} r^{-1}-1-c_3 \\ c_4 \end{bmatrix}}
$$
where $0< c_1,c_2,c_3,c_4 <r$. Let  $\D_1=\{d_1,d_2,d_3\}$, $\D_2=\{d_1,d_2,d_3'\}$ and let $K$, $K'$ be the self-similar sets generated by $\{S_j(x)=r(x+d_j):\,d_j\in \D_1\}$ and  $\{S_j(x)=r(x+d_j):\,d_j\in \D_2\}$, respectively. Then $K\cup K'\simeq K\simeq K'$.(See Figure \ref{union}.)
\end{Exa}

\begin{proof}
 Note that $K$ and $K'$ are Lipschitz equivalent to the $3$-Cantor set, and they coincide in the diagonal, more precisely, $K_\bi\cap K'_\bj\ne\emptyset$ if and only if $\bi=\bj\in\cup_{n\ge0}\{1,2\}^n:=\Sigma_2^*$.  Let $X=\{o\}\cup\big(\cup_{n\ge0}\{1,2\}\Sigma^n\big)$ with $\Sigma = \{1,2,3\}$,  and define the set of horizontal edges by
 $$
 \E_h=\{(1\bi,2\bi):  \bi\in\Sigma_2^*\}.
 $$
Then $(X,\E)$ where $\E=\E_v\cup \E_h$ is the augmented tree induced by $K\cup K'$. The equivalence classes of $(X,\E)$ are  $[1,2],[3]$, and the corresponding incidence matrix is
$$
A=\begin{bmatrix}
          2&2\\
          0&3
        \end{bmatrix}.
$$
Hence the augmented tree is simple and the conclusion follows by Corollary \ref{th5.2}.
\end{proof}

\begin{figure} [h]
    \centering
     \subfigure[]{
    \includegraphics[width=1.5in]{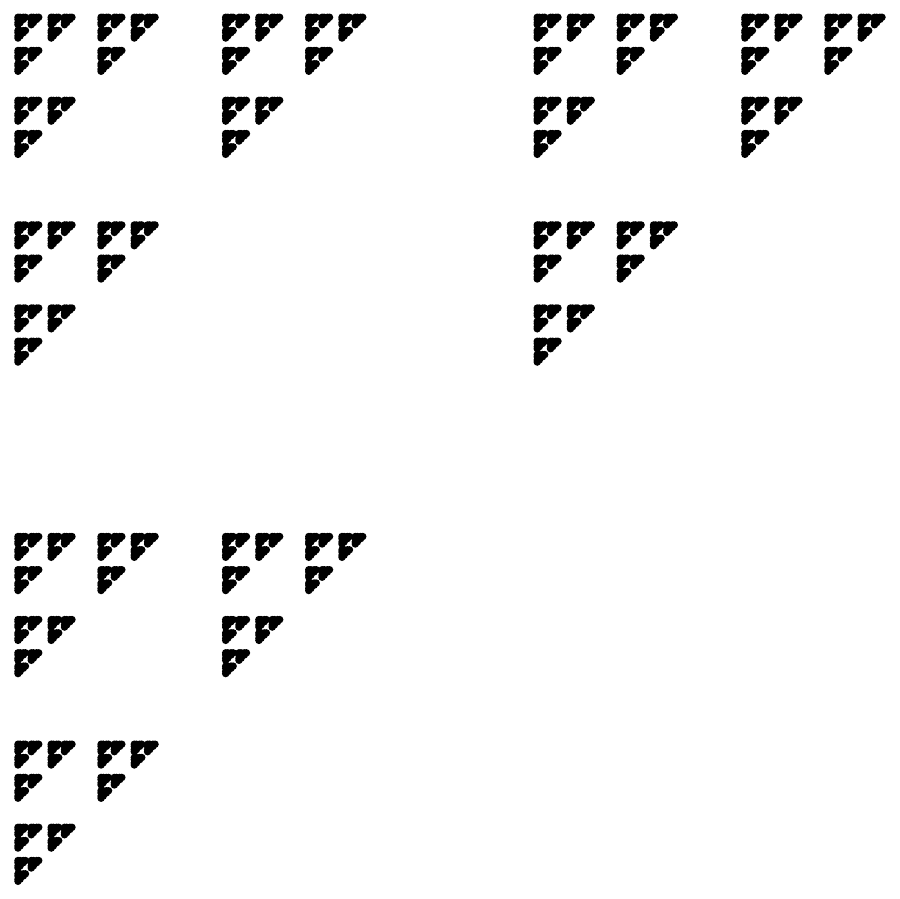}
    }
     \subfigure[]{
      \includegraphics[width=1.5in]{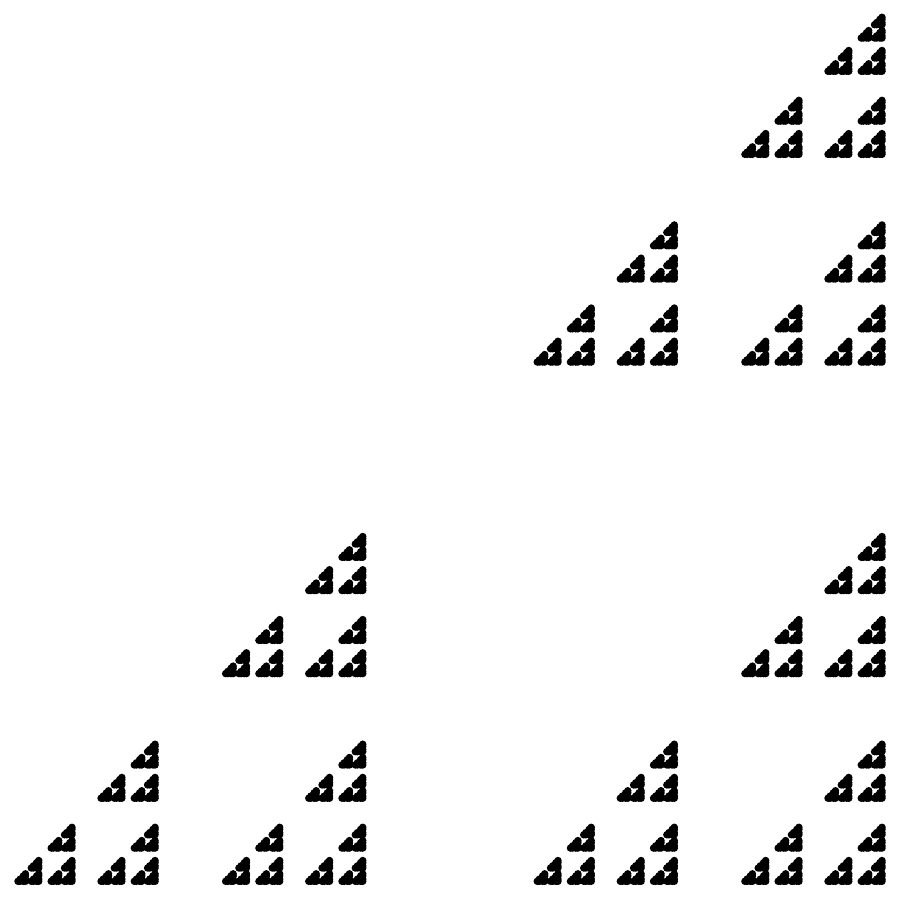}
     }\qquad \qquad
      \subfigure[]{
       \includegraphics[width=1.5in]{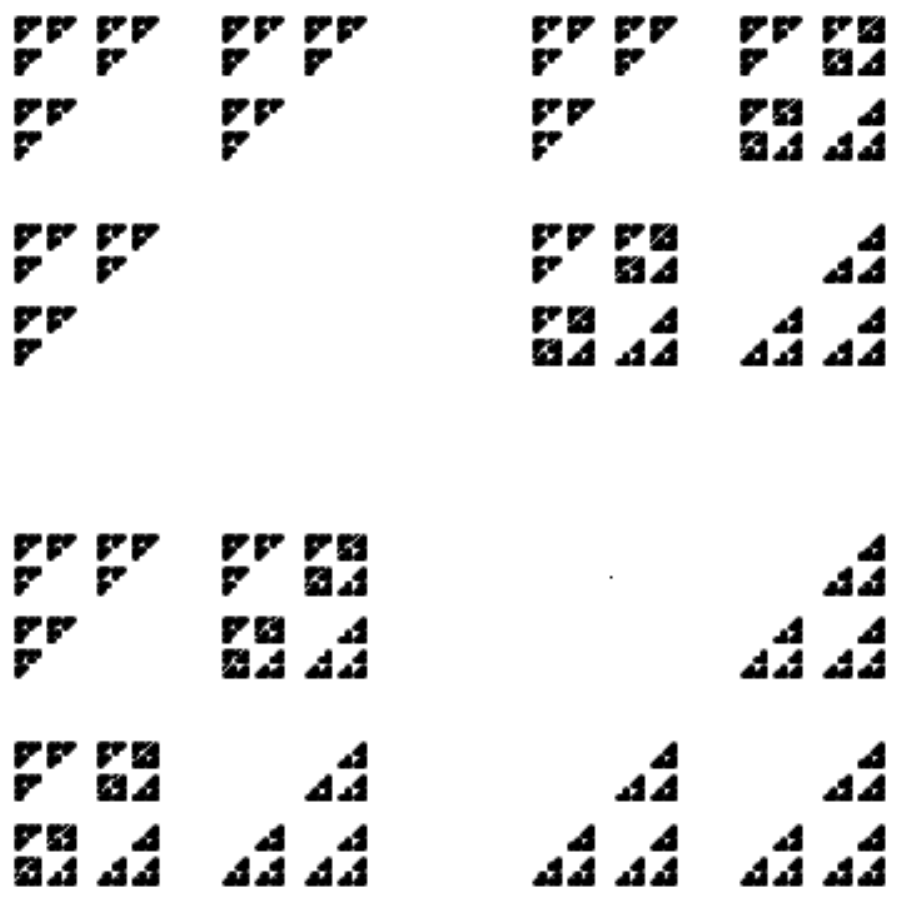}
      }
\caption{Example \ref{exa5.4}: $K$, $K'$ and $K\cup K'$ for $r=2/5,c_i=0, i=1,\dots,4$.}\label{union}
\end{figure}

\begin{Prop}\label{exa5.5}
Let $\mathcal C$ be the standard Cantor set for which the IFS is
$$
S_0(x)=x/3, \quad S_2(x)=(x+2)/3.
$$
Then $\mathcal C\cup(\mathcal C+\alpha)\simeq \mathcal C$  when  $\alpha>1$ or $0\le\alpha\le 1$ is a rational.
\end{Prop}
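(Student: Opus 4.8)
The plan is to exhibit ${\mathcal C}\cup({\mathcal C}+\alpha)$ as the hyperbolic boundary of a simple augmented tree and then to apply Corollary \ref{th5.2}. First record the elementary geometry common to both cases. A level-$n$ cell of ${\mathcal C}$ is $3^{-n}{\mathcal C}+a$ with $a\in G_n:=\{\sum_{k=1}^{n}a_k3^{-k}:a_k\in\{0,2\}\}$, a level-$n$ cell of ${\mathcal C}+\alpha$ is $3^{-n}{\mathcal C}+b+\alpha$ with $b\in G_n$, and the classical identity ${\mathcal C}-{\mathcal C}=[-1,1]$ (equivalently ${\mathcal C}+{\mathcal C}=[0,2]$) gives
$$
(3^{-n}{\mathcal C}+a)\cap(3^{-n}{\mathcal C}+b+\alpha)\neq\emptyset\iff t:=3^{n}(a-b-\alpha)\in[-1,1],
$$
with $\dist\big(3^{-n}{\mathcal C}+a,\,3^{-n}{\mathcal C}+b+\alpha\big)=3^{-n}\dist(t,[-1,1])$ otherwise. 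Since $3^{n}(a-b)\in 2{\mathbb Z}$ for $a,b\in G_n$, the same computation with $\alpha=0$ (and likewise within the copy of ${\mathcal C}+\alpha$) shows no two distinct level-$n$ cells inside one copy meet, and two such are at distance $\ge 3^{-n}$. So in the union tree $X=\{o\}\cup\big(\cup_{n\ge0}\{1,2\}\{0,2\}^{n}\big)$ of Corollary \ref{th5.2} (with $1\bi$ indexing $S_{\bi}({\mathcal C})$, $2\bj$ indexing $S_{\bj}({\mathcal C})+\alpha$, and horizontal edges joining cells that meet) every horizontal edge runs between the two copies; since each level-$n$ cell meets at most two cells of the other copy and $G_n$ has no three points at consecutive spacing $2\cdot 3^{-n}$, each horizontal component has size at most $4$.

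\emph{Case $\alpha>1$.} Here ${\mathcal C}\subset[0,1]$ and ${\mathcal C}+\alpha\subset[\alpha,\alpha+1]$ are disjoint, so no cell of ${\mathcal C}$ meets a cell of ${\mathcal C}+\alpha$ and $\E_h=\emptyset$; thus $(X,\E_v\cup\E_h)=(X,\E_v)$ is trivially simple. Both $K_1={\mathcal C}$ and $K_2={\mathcal C}+\alpha$ are self-similar with ratio $1/3$ and satisfy condition (H) (being translates of ${\mathcal C}$), and ${\mathcal C}\cup({\mathcal C}+\alpha)$ satisfies condition (H) in the sense of Corollary \ref{th5.2} with $c=\min\{1,\alpha-1\}$ (cells inside one copy are $\ge 3^{-n}$ apart, cells from different copies are $\ge\alpha-1$ apart). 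Corollary \ref{th5.2} then gives ${\mathcal C}\cup({\mathcal C}+\alpha)\simeq{\mathcal C}$.

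\emph{Case $0\le\alpha\le1$ rational.} The case $\alpha=0$ is trivial, so assume $\alpha=p/q>0$ with $q=3^{s}q'$, $\gcd(q',3)=1$. Put $\theta_{n}:=\{3^{n}\alpha\}$ (fractional part) and $\epsilon_{n}:=\lfloor 3^{n}\alpha\rfloor\bmod 2$. Since $\theta_{n+1}=\{3\theta_{n}\}$ and $\alpha$ is rational, the pair $(\theta_{n},\epsilon_{n})$ is eventually periodic in $n$, takes finitely many values, and every $\theta_{n}$ lies in $\frac{1}{q'}{\mathbb Z}$. From the displayed criterion, the offset $t$ of a contacting pair at level $n$ equals $-\theta_{n}$ or $1-\theta_{n}$ (with the extra value $-1$ possible only when $\theta_{n}=0$), selected by $\epsilon_{n}$; moreover $t$ transforms along descendants by $t\mapsto 3t+\delta$ with $\delta\in\{-2,0,2\}$. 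Hence the contact pattern on $X_{n}$, and inductively the isomorphism type of the descendant graph $T_{\D}$ of a horizontal component $T\subset X_{n}$, is determined by $(\theta_{n},\epsilon_{n})$ together with bounded local data; as $(\theta_{n},\epsilon_{n})$ ranges over a finite set, $\F$ has finitely many equivalence classes, i.e. $(X,\E)$ is simple. Finally, when two level-$n$ cells are disjoint, $\dist(t,[-1,1])\ge\min\{\theta_{n},1-\theta_{n}\}\ge 1/q'$ if $\theta_{n}\ne0$ and $\ge 1$ if $\theta_{n}=0$; together with the bound $\ge 3^{-n}$ inside a copy, this gives condition (H) for ${\mathcal C}\cup({\mathcal C}+\alpha)$ with $c=1/q'$. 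Corollary \ref{th5.2} then yields ${\mathcal C}\cup({\mathcal C}+\alpha)\simeq{\mathcal C}$.

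The step I expect to be the main obstacle is the simplicity assertion in the rational case: one must carefully organize which cells of one copy touch which of the other at each level, and check that the descendant graphs $T_{\D}$ depend on nothing beyond the finitely many states $(\theta_{n},\epsilon_{n})$. The tightest sub-case is $\theta_{n}=0$, where cells meet in single points and the contact graph is densest; a convenient reduction is to pass first to $(X^{(k)},\E)$ via Proposition \ref{th2.9}, with $k$ a multiple of the eventual period of $(\theta_{n},\epsilon_{n})$, so that the combinatorics, and the incidence matrix $A^{k}$, become level-independent.
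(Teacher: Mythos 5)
Your argument is correct in outline, and in the rational case it follows a genuinely more unified route than the paper. The paper splits $0<\alpha\le 1$ according to whether $\alpha$ has two radix expansions with digits in $\{-2,0,2\}$ (the triadic-type rationals of \eqref{eq5.2}) or a unique, eventually periodic one: the former are handled by rescaling, writing $3^{k+1}\big(\mathcal{C}\cup(\mathcal{C}+\alpha)\big)$ as a finite disjoint union of translates of $\mathcal{C}$ and $\mathcal{C}\cup(\mathcal{C}+1)$; for the latter the intersection criterion collapses to the digit-matching condition \eqref{eq5.5}, so every horizontal component has at most two vertices, and simplicity is proved by an explicit shift isomorphism $T\sim T'$ between components at levels $n$ and $n+M$ beyond the preperiod. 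You instead run one finite-state argument: the offset $t=3^n(a-b-\alpha)$, the states $(\{3^n\alpha\},\lfloor 3^n\alpha\rfloor\bmod 2)$ and the recursion $t\mapsto 3t+\delta$ govern all contacts, which absorbs the triadic case (where $\theta_n=0$ and components may have up to four vertices) without any rescaling and treats $\alpha=1$ on the same footing; what you lose is that the decisive point --- that the isomorphism type of $T_\D$ depends only on the state together with the offset-decorated shape of $T$ --- is asserted rather than proved, as you yourself flag. It does go through: descendant contacts occur only between children of touching cells, and the children's offsets $3t+\delta$ depend only on $t$, so the decorated shapes of the offspring components are a deterministic function of the decorated shape of $T$ and the state, and there are only finitely many such pairs; writing this out (or passing to $X^{(k)}$ via Proposition \ref{th2.9}, as you suggest, and exhibiting the analogue of the paper's shift map) is exactly the paper's claim in different clothing, so it should not be treated as optional. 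Two small corrections: the assertions $\theta_n\in\frac{1}{q'}\mathbb{Z}$ and $\min\{\theta_n,1-\theta_n\}\ge 1/q'$ are valid only for $n\ge s$ (try $\alpha=1/3$ at $n=0$), so the condition-(H) constant must be adjusted over the finitely many initial levels, which is harmless; and for $\alpha>1$ the paper simply invokes Proposition \ref{th2.8} for the disjoint union, while your route through Corollary \ref{th5.2} also works.
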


\begin{proof}  Let $K_a = \mathcal C$ and $K_b = \mathcal C +\alpha$. When $\alpha>1$, the two sets are disjoint, the conclusion follows  from Proposition \ref{th2.8} for the disjoint union of two trees. For $\alpha =1$, then $K_a$ touches  $K_b$  at one point. We can use the same argument as in Example \ref{exa5.4} to show that the augmented tree induced by $K_a \cup K_b$ is simple, and has incidence matrix
$$
A=\begin{bmatrix}
1&2\\0&2
\end{bmatrix},
$$
which yields $K_a\cup K_b\simeq \mathcal C$.

Now we consider the rational $\alpha \in (0,1)$. For the radix expansion $\alpha = \sum_{n=1}^\infty \frac{\alpha_n}{3^n}$, with $\alpha_n \in \{ -2, 0,2\}$, it is easy to check that $\alpha$ has two expansions if and only if $\alpha = \sum_{n=1}^k \alpha_n3^{-n}\pm 3^{-k}$, and the representing sequences are
\begin{equation}\label{eq5.2}
\alpha_1\cdots \alpha_{k-1}0 22\cdots \quad \hbox {and} \quad  \alpha_1\cdots \alpha_{k-1}2{(-2)}(-2) \cdots.
\end{equation}

Let $\D=\{0,2\}$ and $\D_n=\D+3\D+\cdots + 3^{n-1}\D, n\ge 1$. Then the Cantor set $\mathcal C$ satisfies the following set equation for all $n\ge 1$: $${\mathcal C}=\bigcup_{d\in \D_n}\frac 1{3^n}({\mathcal C}+d).$$
Hence $3^{k+1} (K_a\cup K_b)$ can be written as $$3^{k+1} (K_a\cup K_b)=3^{k+1}\big({\mathcal C}\cup({\mathcal C}+\alpha)\big)=\bigcup_{d,d'\in \D_{k+1}}{\mathcal C}\cup ({\mathcal C}+d-d'+3^{k+1}\alpha)$$
which is a finite disjoint union of translates of $\mathcal C$ and ${\mathcal C}\cup({\mathcal C}+1)$.  By the last part and Corollary \ref{th5.2}, we conclude that  $K_a\cup K_b\simeq \mathcal C$.

It remains to consider the case that the rational $\alpha\in(0,1)$ has a unique expression. The representing sequence has the form
\begin{equation} \label {eq5.3}
\alpha_1\alpha_2\cdots=\alpha_1\cdots\alpha_N\eta \eta \cdots,
\end{equation}
where $\eta = \alpha_{N+1} \cdots \alpha_{N+M}$, and is not equal to the expression in \eqref{eq5.2}. For convenience, we denote the symbolic space of $\mathcal C$ by $\Sigma^* =\cup_{n\ge0}\Sigma^n$ with $\Sigma=\{0,2\}$, and let $ X= \{o\} \cup  \{a,b\}\Sigma^*$ be the union tree for $K_a\cup K_b$. Let $I=[0,1]$ be the unit interval and $I_\bi=S_\bi(I)$, then for any $\bi,\bj\in \Sigma^n$, $I_\bi\cap I_\bj \neq \emptyset$ if and only if $I_\bi=I_\bj$. Define
$$
\E_h=\{(a\bi,b\bj):\, |\bi|=|\bj|,\ I_\bi\cap(I_\bj+\alpha)\ne\emptyset\},
$$
and $\E=\E_v\cup\E_h$, then $(X,\E)$ is the augmented tree induced by $K_a\cup K_b$. Note that for distinct $\bi,\bj \in \Sigma^n$, $(a\bi,b\bj)\in\E_h$ is equivalent to $|S_\bi(0)-(S_\bj(0)+\alpha)|<3^{-n}$ (strict inequality  holds, otherwise it will reduce to the previous case that $\alpha$ has two radix expansions)  i.e.,
\begin{equation} \label{eq5.4}
\left|\sum_{k=1}^n\frac{i_k-j_k-\alpha_k}{3^k} -\sum_{k=n+1}^\infty\frac{\alpha_k}{3^n}\right|< \frac1{3^n},
\end{equation}
 which is also equivalent to
\begin{equation} \label{eq5.5}
i_k-j_k=\alpha_k, \quad  1\leq k \leq n
\end{equation}
 in view of the unique radix expansion of $\alpha$.  That (\ref{eq5.4}) together with the diameter of $I_\bi$ or $I_\bj+\alpha$ being $3^{-n}$, implies that any horizontal  component $T$ of $(X,\E)$ must satisfy $\#T\le 2$. Note that all the subtrees  $T_\D$ with  $\#T=1$ are graph isomorphic.  Also we claim that
 \vspace {0.1cm}

 {\it if $T=\{a\bi, b\bj\}, T'=\{a\bi', b\bj'\}$  are two horizontal components  with $\bi, \bj\in \Sigma^n$ and $\bi', \bj\in \Sigma^{n+M}, n>N$,  then $T \sim T'$.}

\vspace {0.1cm}

\noindent  Then all horizontal components are equivalent to those in the first $N+M+1$ levels of $X$. Hence $(X, \E)$ is simple, which yields the desired result by Corollary \ref{th5.2}.

To prove the claim, we first define a map $\sigma : T_\D \to T'_\D$ by
$$
\sigma (a\bi \bu) = a\bi'\bu, \ \ \sigma (b\bj \bv) = b\bj'\bv
$$
and then show that $\sigma$ is a graph isomorphism.  Let $(x,y)$ be a horizontal edge in $T_\D$. Suppose $x=a\bi\bu, y=b\bj\bv$, where $\bi=i_1\cdots i_n, \bu=i_{n+1}\cdots i_{n+k}$ and $\bj=j_1\cdots j_n, \bv=j_{n+1}\cdots j_{n+k}$. Then by \eqref{eq5.5},
we have
$$
i_m - j_m=\alpha_m, \quad 1\le m\le n+k.
$$
Since $(a\bi',b\bj')\in \E_h$, \eqref{eq5.5} again implies $i_m'-j_m'=\alpha_m,\ 1\le m\le n+M$;  also by the definition of $\sigma$ and (\ref{eq5.3}),
$$
i_{m+M}'-j_{m+M}'= i_m- j_m = \alpha_m = \alpha_{m+M} , \quad n+1 \leq m \leq n+k.
$$
By (\ref{eq5.5}), we have  $(a\bi'\bu,b\bj'\bv)\in \E_h$. Therefore $\sigma$ preserves the horizontal edges. Analogously, $\sigma^{-1}$ has the same property. This completes the proof of the claim.
\end{proof}

\bigskip

To conclude, we mention that we only consider the augmented trees arise from the IFS with equal contraction ratio. There are interesting investigations of Lipschitz equivalence of totally disconnected self-similar sets with non-equal contraction ratios  (\cite{RuWaXi12, RaRuWa10, XiRu07, XiXi13}). In particular, in \cite{XiXi13}, Xi and Xiong gave an extensive study of the exponentially commensurable contraction ratios and the open set condition; they obtained a complete classification of such case in terms of the ideal classes. It will be interesting to show how this hyperbolic graph approach can be extended to such cases.

\bigskip

\end{document}